\newtheorem{theorem}{Theorem}[section]
\newtheorem{lemma}[theorem]{Lemma}
\newtheorem{proposition}[theorem]{Proposition}
\newtheorem{problem}[theorem]{Problem}
\newtheorem{corollary}[theorem]{Corollary}
\theoremstyle{definition}
\newtheorem{definition}[theorem]{Definition}
\newtheorem{example}[theorem]{Example}
\newtheorem{axiom}[theorem]{Axiom}
\theoremstyle{remark}
\newtheorem{remark}[theorem]{Remark}
\newtheorem{notation}[theorem]{Notation}
\numberwithin{equation}{section}
\begin{document}


\title[Extension operators and twisted sums]{Extension operators and twisted sums\\ of $c_0$ and $C(K)$ spaces}

\author[W.\ Marciszewski]{Witold Marciszewski}
\author[G.\ Plebanek]{Grzegorz Plebanek}
\address{Institute of Mathematics\\
University of Warsaw\\ Banacha 2\newline 02--097 Warszawa\\
Poland} \email{wmarcisz@mimuw.edu.pl}

\address{Mathematical Institute\\ University of  Wroc\l aw\\ pl.\ Grunwaldzki 2/4\\
\newline 50--384 Wroc\-\l aw, Poland}

 \email{grzes@math.uni.wroc.pl}

\date{\today}

\thanks{The first author was partially supported by the Polish National Science Center research grant DEC-2012/07/B/ST1/03363. The second author was partially supported by the Polish National Science Center research grant NCN  2013/11/B/ST1/03596 (2014-2017).}

\subjclass[2010]{Primary 46B25,  46B26, 46E15; Secondary 03E35, 54C55, 54D40.}
\keywords{C(K) space; twisted sum; extension operator}

\begin{abstract}
We investigate the following problem posed by Cabello Sanch\'ez, Castillo, Kalton, and Yost:

Let $K$ be a nonmetrizable compact space. Does there exist a nontrivial twisted sum of $c_0$ and $C(K)$, i.e., does there exist a Banach space $X$ containing a non-complemented copy $Z$ of $c_0$ such that the quotient space $X/Z$ is isomorphic to $C(K)$?

Using additional set-theoretic assumptions we give the first examples of compact spaces $K$ providing  a negative answer to this question.
We show that under Martin's axiom and the negation of the continuum hypothesis, if either $K$ is the Cantor cube $2^{\omega_1}$ or $K$ is a separable scattered compact space of height $3$ and weight $\omega_1$, then every
twisted sum of $c_0$ and $C(K)$ is trivial.

We also construct nontrivial twisted sums of $c_0$ and $C(K)$ for
$K$ belonging to several classes of compacta.
Our main tool is an investigation of pairs of compact spaces $K\subseteq L$ which do not admit an extension operator $C(K)\to C(L)$.
\end{abstract}

\maketitle

\newcommand{\con}{\mathfrak c}
\newcommand{\eps}{\varepsilon}
\newcommand{\fA}{\mathfrak A}
\newcommand{\fB}{\mathfrak B}
\newcommand{\fC}{\mathfrak C}
\newcommand{\fM}{\mathfrak M}
\newcommand{\bP}{\mathbb P}
\newcommand{\bD}{\mathbb D}
\newcommand{\bU}{\mathbb U}
\newcommand{\bG}{\mathbb G}
\newcommand{\bB}{\mathbb B}
\newcommand{\BB}{\protect{\mathcal B}}
\newcommand{\cA}{\mathcal A}
\newcommand{\cC}{{\mathcal C}}
\newcommand{\cF}{{\mathcal F}}
\newcommand{\FF}{{\mathcal F}}
\newcommand{\GG}{{\mathcal G}}
\newcommand{\II}{{\mathcal I}}
\newcommand{\LL}{{\mathcal L}}
\newcommand{\NN}{{\mathcal N}}
\newcommand{\UU}{{\mathcal U}}
\newcommand{\VV}{{\mathcal V}}
\newcommand{\HH}{{\mathcal H}}
\newcommand{\DD}{{\mathcal D}}
\newcommand{\RR}{\protect{\mathcal R}}
\newcommand{\ide}{\mathcal N}
\newcommand{\btu}{\bigtriangleup}
\newcommand{\hra}{\hookrightarrow}
\newcommand{\ve}{\vee}
\newcommand{\we}{\cdot}
\newcommand{\de}{\protect{\rm{\; d}}}
\newcommand{\er}{\mathbb R}
\newcommand{\qu}{\mathbb Q}
\newcommand{\supp}{{\rm supp} }
\newcommand{\card}{{\rm card} }
\newcommand{\wn}{{\rm int} }
\newcommand{\ult}{{\rm ult}}
\newcommand{\vf}{\varphi}
\newcommand{\osc}{{\rm osc}}
\newcommand{\cov}{{\rm cov}}
\newcommand{\cf}{{\rm cf}}
\newcommand{\ol}{\overline}
\newcommand{\me}{\protect{\bf v}}
\newcommand{\ex}{\protect{\bf x}}
\newcommand{\stevo}{Todor\v{c}evi\'c}
\newcommand{\cc}{\protect{\mathfrak C}}
\newcommand{\scc}{\protect{\mathfrak C^*}}
\newcommand{\lra}{\longrightarrow}
\newcommand{\sm}{\setminus}
\newcommand{\uhr}{\upharpoonright}

\newcommand{\sub}{\subseteq}
\newcommand{\ms}{$(M^*)$}
\newcommand{\m}{$(M)$}
\newcommand{\MA}{\mbox{\sf MA}}
\newcommand{\CH}{\mbox{\sf CH}}
\newcommand{\clop}{\protect{\rm Clop} }
\newcommand{\fX}{\mathfrak X}
\newcommand{\fY}{\mathfrak Y}
\newcommand{\fZ}{\mathfrak Z}
\newcommand{\cde}{\protect{\rm CDE}}
\newcommand{\lep}{\rm LEP}
\newcommand{\lo}{\protect{ l}}
\newcommand{\fin}{\mbox{\it fin}}
\newcommand{\ba}{M}
\newcommand{\too}{2^{\omega_1}}
\newcommand{\la}{\langle}
\newcommand{\ra}{\rangle}
\newcommand{\dist}{\protect{\rm dist}}
\newcommand{\wl}{ (\#)}
\newcommand{\mq}{M^\qu}
\section{Introduction}

A \emph{twisted sum} of Banach spaces $Z$ and $Y$ is a short exact sequence
\[0\to Z\to X \to Y\to 0, \]
where $X$ is a Banach space and the maps are bounded linear operators.
Such a twisted sum is called \emph{trivial} if the exact sequence splits, i.e.\
if the map $Z\to X$ admits a left inverse (in other words, if the map $X\to Y$ admits a right inverse).
This is equivalent to saying that  the range of the map $Z\to X$ is complemented in $X$; in this case, $X\cong Y\oplus Z$.
We can, informally, say that $X$ is a nontrivial twisted sum of $Z$ and $Y$ if $Z$ can be isomorphically embedded onto an uncomplemented copy
$Z'$ of $X$ so that $X/Z'$ is isomorphic to $Y$. Twisted sums of Banach spaces and their connection with injectivity-like properties are discussed
in a recent monograph \cite{ASCGM}.

The classical Sobczyk theorem asserts that every isomorphic copy of $c_0$ is complemented in every separable superspace. This implies that $Z=c_0$ admits a nontrivial twisted sum with no separable Banach space $Y$. In particular, there is no nontrivial twisted sum of $c_0$ and $C(K)$, whenever $K$ is a compact metric space.
Castillo \cite{Ca16} and Correa and Tausk \cite{CT16} investigated the following problem originated in \cite{CCKY03} and \cite{CCY00}.

\begin{problem}\label{i:1}
Given  a nonmetrizable compact space $K$, does there exist  a nontrivial twisted sum of $c_0$ and $C(K)$?
\end{problem}

There are several classes of nonmetrizable compacta for which  Problem \ref{i:1} has an affirmative  answer, cf.\ \cite{Ca16},
\cite{CT16}. We prove, however, that  under Martin's axiom ($\MA$) and the negation of the continuum hypothesis ($\CH$)  there are nonmetrizable compacta $K$ with the property
that every twisted sum of $c_0$ and $C(K)$ is trivial, see Corollary \ref{tts:2} and \ref{tts:3}.

Our main results are contained in Section \ref{tts} and rely on considerations from Sections 3 and 4.
In Section \ref{ots} we investigate the following  construction that enables us to handle, for a fixed space $K$, all possible twisted sums
of $c_0$ and $C(K)$:
Let $L$ denote the unit ball in $C(K)^*$ equipped with the $weak^*$ topology; then $C(K)$ embeds
into $C(L)$ in a canonical way.
Take any compact space $L'$ which consists of $L$ and countably many isolated points; we say that such $L'$ is  a countable discrete extension of $L$.
 Then $C(L')$ contains, again in a canonical way, a twisted sum of $c_0$ and $C(K)$.
This leads to formulating a certain property of the space $K$, called the $^*$extension property,  under which every twisted sum of $c_0$ and $C(K)$ is trivial, see Theorem \ref{ts:2}.

In Section \ref{am} we consider functions on a given Boolean algebra $\fA$. We say that $\fA$ has the approximation property if every sequence
of ``nearly additive'' functions on $\fA$ can be suitably approximated by a sequence of finitely additive signed measures on $\fA$; see Definition \ref{am:-1}
for the precise statement. The second major step towards our main result is Theorem \ref{am:6} which yields  that under $\MA$ and the negation of  $\CH$  there are uncountable algebras $\fA$
having the approximation property. The proof of \ref{am:6} is quite involved and makes use of  a result from \cite{BRS94} and its consequences that are  discussed in Appendix.

Then in Section \ref{tts} we prove that a compact zerodimensional space $K$ has the $^*$extension property whenever the algebra of clopen subsets of $K$
has the approximation property. From this result we conclude, in particular,  that the space $K=2^{\omega_1}$  is a relatively consistent counterexample to the question in
Problem \ref{i:1}.  By a result due to Correa and Tausk \cite{CT16} this shows that the problem if $c_0$ admits  a nontrivial twisted sum with $C(2^{\omega_1})$
cannot be decided within the usual set theory.

For a compact space $K$, we identify $C(K)^*$ with the space $M(K)$ of signed Radon measures on $K$ with finite variation, and we denote the unit ball of $M(K)$, equipped with the $weak^*$ topology, by $M_1(K)$. The $^*$extension property of $K$ mentioned above is closely related to the existence of an extension operator
$E:C(M_1(K))\to C(L)$, where $L$ is a countable discrete extension of $M_1(K)$.
From that point of view, the ball $M_1(K)$ is more difficult to deal with than its subspace $P(K)$, consisting of all probability Radon measures on $K$.
Recall that $P(2^{\omega_1})$ is homeomorphic to $[0,1]^{\omega_1}$ so is a Dugundji space, which guarantees the existence of an extension operator $E:C(P(2^{\omega_1}))\to C(L)$ for any countable discrete extension of $M_1(K)$.
However, we show in Section 6  that the ball  $M_1(K)$ is never  a Dugundji space for nonmetrizable $K$. Roughly speaking, there seems to be no short way to
checking the $^*$extension property for $2^{\omega_1}$.

Sections 7-9 are devoted to those compacta $K$ for which one can construct a nontrivial twisted sum of $c_0$ and $C(K)$.
We explore here the following natural approach.

 For a given compact space $K$ consider a countable discrete extension $L$ of the space $K$ itself, i.e.,
a compactum $L$ consisting of $K$ and countably many isolated points.
Denote the subspace $\{f\in C(L): f|K\equiv 0\}$ by ${C(L|K)}$. Note that if $L\sm K$ is countable infinite and discrete then $C(L|K)$ is isometric to $c_0$ so
$C(L)$ is a twisted sum of $c_0$ and $C(K)$. Such a twisted sum is trivial if and only if there is an extension operator
$E:C(K)\to C(L)$, that is a bounded operator such that $Eg|K=g$ for every $g\in C(K)$.

Hence the natural way of constructing a nontrivial twisted sum of $c_0$ and $C(K)$ is to find a countable discrete extension $L$ of $K$
without a corresponding extension operator.
Building on this simple idea  we construct nontrivial twisted sums of $c_0$ and $C(K)$ for spaces $K$ from several classes of  nonmetrizable compacta:
dyadic spaces (Section \ref{dyadic}), linearly ordered compact spaces (Section \ref{lo}), scattered compacta of finite height
(Section \ref{sc}). In this way we extend results from Castillo \cite{Ca16} and Correa and Tausk \cite{CT16} or present their alternative proofs.
It turns out that the problem of constructing a countable discrete extension of a given space $K$ without the corresponding extension operator
is related to some problems of infinitary combinatorics; for instance, our Example \ref{d:4} uses multiple gaps of Avil\'es and Todorcevic \cite{AT11}.
Note also that  our Theorem \ref{added} shows that the main result of Castillo \cite{Ca16} (proved under $\CH$) does require some additional set-theoretic assumptions.

Section 10 offers additional comments and collect some related open problems.
\smallskip

The second author is  very grateful to the anonymous referee of \cite{DP16} who pointed out interesting connection of the results presented there with Problem \ref{i:1}.

\section{Preliminaries}\label{pr}

If $K$ is a compact space then $C(K)$ is the familiar Banach space of continuous real-valued functions on $K$.
We always identify $C(K)^*$ with the space $M(K)$ of signed Radon measures on $K$ with finite variation. $M_1(K)$ stands for the closed unit ball of $M(K)$, equipped with the $weak^*$ topology inherited from $C(K)^*$. We denote the ball $rM_1(K)$ of radius $r>0$ by $M_r(K)$.
The symbol $P(K)$ denotes the subspace of $M_1(K)$ consisting of all probability measures; given $x\in K$, $\delta_x\in P(K)$ is the Dirac measure, a point mass concentrated at the point $x$.

It will  be convenient to use the following notion.

\begin{definition}
If $L$ is a compact space then a compact superspace $L'\supseteq L$ will be called a {\em countable discrete extension} of $L$ if
$L'\sm L$ is countable and discrete.
\end{definition}

We shall write $L'\in \cde(L)$ to say that $L'$ is such an extension of $L$.
Typically, when $L'\sm L$ is dense in $L'$, $L'$ is homeomorphic to a compactification of $\omega$ such that its remainder is homeomorphic to $L$.
Unless stated otherwise, if $L'\in\cde(L)$ and $L'\sm L$ is infinite, then we identify $L'\sm L$ with the set of natural numbers $\omega$.

For the future reference we state the following simple observations on countable discrete extensions of compacta.

\begin{lemma}\label{ts:0.25}
If $L'\in\cde(L)$ and $h_1,h_2\in C(L')$ agree on $L$ then $\lim_n(h_1(n)-h_2(n))=0$.
\end{lemma}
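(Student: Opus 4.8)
The plan is to reduce to a single function and then argue by contradiction, exploiting that a compact space is limit point compact. Set $g=h_1-h_2\in C(L')$. Since $h_1$ and $h_2$ agree on $L$, we have $g\uhr L\equiv 0$, and the assertion becomes $\lim_n g(n)=0$, where $n$ ranges over $L'\sm L$, identified with $\omega$.

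Suppose this fails. Then there are $\eps>0$ and an infinite set $A\subseteq\omega$ with $|g(n)|\geq\eps$ for every $n\in A$. First I would invoke the standard fact that in a compact topological space every infinite subset has an accumulation point. Applied to $A\subseteq L'$, this yields a point $x\in L'$ each of whose neighbourhoods meets $A$ in infinitely many points. The key observation is that $x$ cannot be one of the added isolated points: every $m\in L'\sm L$ has $\{m\}$ as a neighbourhood, which meets $A$ in at most one point, so $x\notin L'\sm L$, and hence $x\in L$.

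The contradiction then comes from continuity. Since $x\in L$ we have $g(x)=0$, so by continuity of $g$ there is a neighbourhood $U$ of $x$ with $|g(y)|<\eps$ for all $y\in U$. But $x$ is an accumulation point of $A$, so $U$ contains some $n\in A$, whence $|g(n)|<\eps$, contradicting $|g(n)|\geq\eps$. Therefore $\lim_n g(n)=0$, which is exactly the claim.

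There is no substantial obstacle here; the only point requiring care is to use limit point compactness rather than sequential compactness, since a general nonmetrizable compactum $L'$ need not be sequentially compact. One must locate an accumulation point of $A$ directly and then verify, using the discreteness of $L'\sm L$, that it necessarily lies in $L$, where $g$ vanishes.
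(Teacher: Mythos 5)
Your proof is correct and follows essentially the same route as the paper's: assume the conclusion fails on an infinite set, take an accumulation point, note it must lie in $L$ (since the points of $L'\sm L$ are isolated), and derive a contradiction with continuity. You merely spell out the details — the reduction to $g=h_1-h_2$, the use of limit point rather than sequential compactness, and why the accumulation point cannot be an added isolated point — that the paper's two-line argument leaves implicit.
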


\begin{proof}
Otherwise, $|h_1(n)-h_2(n)|\ge \eps$ for some $\eps>0$ and $n$ from some infinite set $N\sub\omega$. Taking an accumulation point $x$ of $N\sub L'$ we get
$x\in L$ and $h_1(x)\neq h_2(x)$, a contradiction.
\end{proof}

\begin{lemma}\label{c:3}
Let $L'\in\cde(L)$ and let $f_1,\ldots, f_k\in C(L')$ for some $k$. Then for every $\eps>0$ there is $n_0$ such that for every $n\ge n_0$
there is $x\in L$ such that  $|f_i(x)-f_i(n)|<\eps$ for every $i\le k$.
\end{lemma}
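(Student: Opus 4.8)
The plan is to bundle the finitely many functions into one continuous map into Euclidean space and then run an accumulation-point argument of the same kind as in the proof of Lemma~\ref{ts:0.25}. Set $F=(f_1,\dots,f_k)\colon L'\to\er^k$; this map is continuous, and I equip $\er^k$ with the supremum norm $\|v\|_\infty=\max_{i\le k}|v_i|$. In this language the assertion to be proved is exactly that for every $\eps>0$ there is $n_0$ so that for each $n\ge n_0$ one can find $x\in L$ with $\|F(n)-F(x)\|_\infty<\eps$.

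First I would argue by contradiction. Suppose the conclusion fails for some fixed $\eps>0$. Then for every $n_0$ there is $n\ge n_0$ for which no $x\in L$ satisfies $\|F(n)-F(x)\|_\infty<\eps$, so the set
\[
N=\{n\in\omega:\ \|F(n)-F(x)\|_\infty\ge\eps\ \text{ for every }x\in L\}
\]
is cofinal in $\omega$, and in particular infinite.

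Next I would produce an accumulation point of $N$, precisely as in Lemma~\ref{ts:0.25}. Since $N$ is an infinite subset of the compact space $L'$, it has an accumulation point $p$; and because $L'\sm L=\omega$ is discrete, no point of $\omega$ can be such an accumulation point, so $p\in L$. Consider now the open set $V=\{v\in\er^k:\ \|v-F(p)\|_\infty<\eps\}$. By continuity of $F$ its preimage $F^{-1}(V)$ is a neighbourhood of $p$, and as $p$ is an accumulation point of $N$ this neighbourhood contains some $n\in N$. For that $n$ we get $\|F(n)-F(p)\|_\infty<\eps$ while $p\in L$, which directly contradicts the defining property of $N$.

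I do not anticipate a real obstacle; the only step needing a little care is checking that the accumulation point $p$ lies in $L$ rather than in the discrete remainder $\omega$, which is forced exactly as in Lemma~\ref{ts:0.25}. Note in particular that one does not need to extract a convergent \emph{sequence} out of $N$ --- which might not exist in an arbitrary compact space --- since testing the single neighbourhood $F^{-1}(V)$ against $N$ already yields the contradiction.
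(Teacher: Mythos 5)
Your proof is correct, but it takes the opposite route to the paper's. The paper argues directly: for each $x\in L$ it picks an open neighborhood $V_x\sub L'$ on which every $f_i$ varies by less than $\eps$, extracts by compactness of $L$ a finite subcover $V=\bigcup_{j\le m}V_{x_j}\supseteq L$, and observes that $L'\sm V$ is a closed (hence compact) set of isolated points, hence finite; so every sufficiently large $n$ lies in some $V_{x_j}$ and is $\eps$-approximated by that fixed $x_j$. You instead argue by contradiction, bundling the $f_i$ into a single continuous map $F:L'\to\er^k$ with the sup norm and reusing the accumulation-point mechanism of Lemma~\ref{ts:0.25}: the set $N$ of bad indices, if infinite, accumulates at some $p$, which must lie in $L$ since the points of $L'\sm L$ are isolated in $L'$ ($L$ is closed, so $L'\sm L$ is open and discrete in itself, and an isolated point is never an accumulation point), and continuity of $F$ at $p$ places some $n\in N$ within $\eps$ of $F(p)$, contradicting the definition of $N$. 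Both are one-step compactness arguments of comparable length; the paper's direct version yields a little more, namely a single finite set $x_1,\ldots,x_m\in L$ serving as approximants for \emph{all} sufficiently large $n$ simultaneously, whereas yours is a clean uniformization of the technique already used for Lemma~\ref{ts:0.25} and, as you rightly note, avoids any spurious appeal to sequential compactness, which need not be available in a general compact space.
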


\begin{proof}
For every $x\in L$ take an open neighborhood  $V_x\sub L'$ of $x$ such that $|f_i(x)-f_i(y)|<\eps$ for every $y\in V_x$ and $i\le k$.
Then $L\sub V=\bigcup_{j\le m} V_{x_j}$ for some $m$ and $x_1,\ldots, x_m\in L$. Then $L'\sm V$ must be finite and we are done.
\end{proof}


Given compact spaces $L,L'$ with $L\sub L'$, an extension operator $E:C(L)\to C(L')$ is a bounded linear operator
such that $Eg|L=g$ for every $g\in C(L)$.


Let us consider $L'=L\cup \omega\in \cde(L)$ and  a short exact sequence
\[0\to c_0\stackrel{i}{\rightarrow} C(L') \stackrel{R}{\rightarrow} C(K)\to 0,\]
where $i$ is the natural isometric embedding, which  sends the unit vector $e_n\in c_0$ to the characteristic function $\chi_{\{n\}}\in C(L')$,
and $R$ is the restriction operator. Now if $E:C(L)\to C(L')$ is the extension operator then $E$ is a right inverse of $R$ so the above short sequence splits.
This leads to the following observation.

\begin{lemma}\label{p:1}
For a compact space $L$ and $L'\in\cde{(L)}$ the following are equivalent

\begin{enumerate}[(i)]
\item there is no extension operator $E:C(L)\to C(L')$;
\item $0\to c_0\stackrel{i}{\rightarrow} C(L') \stackrel{R}{\rightarrow} C(K)\to 0$
is a nontrivial twisted sum of $c_0$ and $C(L)$.
\end{enumerate}
\end{lemma}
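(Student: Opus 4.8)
The plan is to reduce the statement to the definition of triviality recalled in the introduction, the only real point being that an extension operator is nothing but a bounded right inverse of the restriction operator. Recall that, by definition, the twisted sum $0\to c_0\stackrel{i}{\to} C(L')\stackrel{R}{\to} C(L)\to 0$ is trivial precisely when the quotient map $R$ (restriction to $L$) admits a bounded right inverse $S\colon C(L)\to C(L')$, that is, $RS=\mathrm{id}_{C(L)}$. So the whole content of the lemma is to match ``$S$ is a right inverse of $R$'' with ``$S$ is an extension operator''.

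First I would verify this matching in both directions. If $RS=\mathrm{id}_{C(L)}$, then for every $g\in C(L)$ we have $(Sg)|L=R(Sg)=g$, so $S$ is an extension operator $C(L)\to C(L')$. Conversely, any extension operator $E$ satisfies $RE=\mathrm{id}_{C(L)}$ by the defining relation $Eg|L=g$, hence is a right inverse of $R$ and witnesses the splitting of the sequence; this is exactly the computation already made in the paragraph preceding the lemma. Thus an extension operator exists if and only if $R$ admits a bounded right inverse, if and only if the sequence splits; passing to negations gives at once the equivalence of (i) and (ii).

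The remaining thing to confirm, so that calling the sequence a twisted sum is legitimate, is exactness of the displayed sequence; this is asserted in the setup but worth recording. The map $i$ is an isometric embedding of $c_0$ onto the closed linear span of $\{\chi_{\{n\}}:n\in\omega\}$, and clearly $\mathrm{im}\,i\subseteq\ker R$. Surjectivity of $R$ follows from the Tietze extension theorem, since $L$ is closed in the compact Hausdorff, hence normal, space $L'$. For the inclusion $\ker R\subseteq\mathrm{im}\,i$ I would take $h\in C(L')$ with $h|L\equiv 0$ and argue that $h(n)\to 0$: any accumulation point of an infinite subset of $\omega$ lies in $L$, where $h$ vanishes (this is the content of Lemma \ref{ts:0.25}, and also follows from Lemma \ref{c:3}), so $(h(n))_n\in c_0$ and $h=\sum_n h(n)\chi_{\{n\}}=i\big((h(n))_n\big)$. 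I do not expect any serious obstacle: the only genuinely topological input is this last fact that a continuous function vanishing on $L$ automatically vanishes along $\omega$ at infinity, and everything else is a matter of unwinding the definitions of extension operator and of a split exact sequence.
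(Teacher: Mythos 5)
Your proposal is correct and follows essentially the same route as the paper: the lemma is presented there as an immediate consequence of the preceding paragraph, which makes exactly your identification of an extension operator with a bounded right inverse of the restriction map $R$, so that (i) and (ii) are negations of the same splitting condition. Your additional verification of exactness (Tietze for surjectivity of $R$, and $\ker R=\mathrm{im}\,i$ via the fact that $h|L\equiv 0$ forces $h(n)\to 0$, i.e.\ Lemma \ref{ts:0.25}) only fills in details the paper leaves implicit in its setup.
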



\begin{remark}\label{p:11}
Let $L'\in\cde{(L)}$ be as in Lemma \ref{p:1} and  $K$ be a compact space containing $L$ and such that $K\cap (L'\sm L)=\emptyset$. Then we can treat $K'=K\cup (L'\sm L)$ as a countable discrete extension of $K$. If we additionally assume that there exists a extension operator $E:C(L)\to C(K)$ then it can be easily observed that there is no extension operator $E:C(K)\to C(K')$, hence  $C(K')$ is a nontrivial twisted sum of $c_0$ and $C(K)$.
\end{remark}

Following \cite{DP16} we say that a compactification $\gamma\omega$ of the discrete space $\omega$ is {\em tame} if the natural copy of $c_0$ in
$C(\gamma\omega)$, consisting of all functions from vanishing on the remainder $K=\gamma\omega\!\sm\!\omega$, is complemented in $C(\gamma\omega)$.
This is equivalent to saying that  $\gamma\omega\in \cde(K)$ and there is a corresponding extension operator.
We include an easy observation related to Lemma \ref{p:1}.

\begin{proposition}\label{p:12}
Let $L$ be a separable compact space and $L'\in\cde{(L)}$ be such that there is no extension operator $E:C(L)\to C(L')$. Then there exists a non-tame compactification $\gamma\omega$ with the remainder $\gamma\omega \sm\omega$ homeomorphic to $L$.
\end{proposition}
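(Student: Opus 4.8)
The plan is to manufacture $\gamma\omega$ by \emph{amalgamating} the given extension $L'$ with an auxiliary genuine compactification of a countable set over the common part $L$, and then to transfer the non-existence of an extension operator from $L'$ to $\gamma\omega$ by a restriction argument. First I would use the separability of $L$ to produce an auxiliary $M\in\cde(L)$ in which the added isolated points are \emph{dense}: fix a countable dense set $\{d_n:n\in\omega\}\sub L$ and a disjoint countable discrete set $S=\{s_n:n\in\omega\}$, and topologize $M=L\cup S$ by declaring each $s_n$ isolated and taking, for $y\in L$ and $U$ open in $L$ with $y\in U$, the sets $U\cup\{s_n:d_n\in U\}$ as a neighbourhood base at $y$. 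A direct check shows that $M$ is compact Hausdorff, that the subspace topology on $L$ is the original one, and that $S$ is dense; thus $M$ is a compactification of $S\cong\omega$ with remainder $L$.

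Next I would form the amalgam $\gamma\omega=L'\cup_L M$. Writing $A=L'\sm L$, the underlying set is $L\cup A\cup S$, every point of $A\cup S$ is isolated, and a neighbourhood base at $y\in L$ consists of the sets $U\cup(V\cap A)\cup(W\cap S)$, where $U$ is open in $L$ with $y\in U$, $V$ is an $L'$-neighbourhood of $y$ and $W$ an $M$-neighbourhood of $y$ with $V\cap L=W\cap L=U$. I would verify that $\gamma\omega$ is compact and Hausdorff: Hausdorffness is immediate since the added points are isolated and points of $L$ are separated through disjoint $U$'s, while for compactness one covers $L$ by finitely many basic sets and notes that an open subset of $L'$ (resp.\ $M$) containing $L$ omits only finitely many points of $A$ (resp.\ $S$), the omitted set being a closed discrete subset of the compact space $L'$ (resp.\ $M$). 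Since $S$ is already dense in $L$, the set $A\cup S$ is dense in $\gamma\omega$, so $\gamma\omega$ is a compactification of $A\cup S\cong\omega$ with remainder $L$, i.e.\ $\gamma\omega\in\cde(L)$. Crucially, the subspace $A\cup L$ carries exactly the topology of $L'$ (intersecting the basic neighbourhoods with $A\cup L$ returns the $L'$-neighbourhoods), so $A\cup L$ is a closed copy of $L'$ inside $\gamma\omega$.

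Finally I would prove that $\gamma\omega$ is not tame. If it were, then by the definition of tameness there would be an extension operator $E:C(L)\to C(\gamma\omega)$. I would then define $T:C(L)\to C(L')$ by $Tg=(Eg)\uhr(A\cup L)$, using the identification $A\cup L\cong L'$. Each $Tg$ is continuous, being the restriction of a continuous function to a subspace; it agrees with $g$ on $L$; and $\|Tg\|\le\|Eg\|\le\|E\|\,\|g\|$, so $T$ is a bounded extension operator $C(L)\to C(L')$, contradicting the hypothesis. Hence $\gamma\omega$ is a non-tame compactification of $\omega$ with remainder homeomorphic to $L$, as required.

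I expect the main obstacle to be the two-fold topological verification in the amalgamation step: checking that the proposed neighbourhood base really defines a compact Hausdorff topology, and --- more importantly for the argument --- confirming that $L'$ sits in $\gamma\omega$ as a genuine topological subspace, since it is precisely this fact that lets a hypothetical extension operator on $\gamma\omega$ restrict to one on $L'$. The construction of $M$ is where the separability of $L$ is used, and without it a compactification of $\omega$ with remainder $L$ need not exist at all.
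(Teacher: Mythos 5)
Your overall strategy --- enlarge $L'$ by a countable dense set of isolated points clustering only along $L$, then kill a hypothetical extension operator on the resulting compactification by restricting it to an embedded closed copy of $L'$ --- is exactly the mechanism of the paper's proof, and your final paragraph correctly supplies the restriction argument (which the paper leaves implicit in the phrase ``with the required properties''). However, your construction of the auxiliary space $M$ is wrong as written, and the error propagates. With the neighbourhood base $U\cup\{s_n: d_n\in U\}$ at $y\in L$, every basic neighbourhood of the point $d_n$ itself contains $s_n$, since $d_n\in U$ for every admissible $U$. Hence $d_n\in\overline{\{s_n\}}$, so $M$ is not Hausdorff --- not even $T_1$ --- and the claimed ``direct check'' that $M$ is compact Hausdorff fails. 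The defect survives your amalgamation: in $\gamma\omega$, any basic neighbourhood $U\cup(V\cap A)\cup(W\cap S)$ of $d_n$ involves an $M$-neighbourhood $W$ of $d_n$, which contains $s_n$; so $\gamma\omega$ is not Hausdorff either, hence not a compactification of $\omega$, and the proof collapses at that point. The problem is structural: attaching a \emph{single} isolated point $s_n$ to each $d_n$ requires that $s_n$ be excludable from small neighbourhoods of $d_n$, which your base does not permit.

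The repair is either to index the base by a tail as well, taking $U\cup\{s_n: d_n\in U,\ n\ge N\}$ for $N\in\omega$ (enumerating the dense set with infinite repetition so these traces on $S$ are nonempty), after which Hausdorffness, compactness and density of $S$ all go through; or to do what the paper does, which handles your two steps in one stroke: inside the product $L'\times[0,1]$ put
\[ K= L'\times\{0\}\cup \left\{\left(d_n,\tfrac{1}{n+k+1}\right): k,n\in\omega\right\}.\]
Here each $d_n$ carries a whole sequence of isolated points at distinct positive heights, so separation is automatic (take $\eps$ smaller than the height to separate $(d_n,0)$ from $(d_n,h)$), $K$ is closed in the compact product, the added points form a countable dense discrete set, the remainder is $L\times\{0\}\cong L$, and $K$ already contains the closed copy $L'\times\{0\}$ of $L'$ --- so your restriction argument applies verbatim, with no amalgamation lemma left to verify.
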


\begin{proof} Let $\{d_n: n\in\omega\}$ be a countable dense subset of $L$. Consider the following subset of the product $L'\times [0,1]$:
\[K= L'\times\{0\}\cup \left\{\left(d_n,\frac{1}{n+k+1}\right): k,n\in\omega\right\}.\]
Obviously, $C=(L'\sm L)\times\{0\}\cup \{(d_n,1/(n+k+1)): k,n\in\omega\}$ is a countable discrete space and  $K$ is a compactification of $C$ with the required properties.
\end{proof}
\medskip

The following standard fact reduces the problem of defining an extension operator for $L'\in\cde(L)$
 to a problem of finding a suitable sequence of measures.

\begin{lemma}\label{p:2}
Let $L'=L\cup\omega$ be a countable discrete extension of a compact space $L$. The following are equivalent

\begin{enumerate}[(i)]
\item there is a extension operator $E: C(L)\to C(L')$ with $\|E\|\le r$;
\item there is a continuous map $\varphi: L'\to M_r(L)$ such that $\varphi(x) = \delta_{x}$ for any $x\in L$;
\item there is a sequence $(\nu_n)_n$ in $M(L)$ such that $\|\nu_n\|\le r$ for every $n$ and $\nu_n-\delta_n\to 0$ in the $weak^*$ topology of $M(L')$.
\end{enumerate}
\end{lemma}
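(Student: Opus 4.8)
The plan is to prove the two equivalences (i)$\Leftrightarrow$(ii) and (ii)$\Leftrightarrow$(iii) separately: the first is the standard duality between operators into a $C(L')$-space and $weak^*$-continuous measure-valued maps, while the second is where the structure of $L'\in\cde(L)$ really enters.

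For (i)$\Leftrightarrow$(ii) I would use the Riesz representation. Given a bounded operator $T\colon C(L)\to C(L')$, to each $y\in L'$ attach the functional $g\mapsto (Tg)(y)$ on $C(L)$, which is integration against a unique $\varphi(y)\in M(L)$, so $(Tg)(y)=\int_L g\,\de\varphi(y)$. Fixing $g$, the map $y\mapsto\int_L g\,\de\varphi(y)=(Tg)(y)$ is continuous, which is exactly $weak^*$-continuity of $\varphi$; and $\|T\|=\sup_y\|\varphi(y)\|$, so $\|T\|\le r$ means $\varphi(L')\sub M_r(L)$. Finally, $Eg|L=g$ for all $g$ says $\int_L g\,\de\varphi(x)=g(x)$ for every $x\in L$ and every $g$, i.e.\ $\varphi(x)=\delta_x$. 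Conversely, any $weak^*$-continuous $\varphi\colon L'\to M_r(L)$ defines an operator by the same integral formula, reversing each step. This part is routine.

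The substance lies in (ii)$\Leftrightarrow$(iii). Put $\nu_n=\varphi(n)$ for $n\in\omega$. Since $\omega$ consists of isolated points of $L'$, $weak^*$-continuity of $\varphi$ is automatic there and can fail only at points of $L$, where $\varphi$ agrees with the (automatically continuous) Dirac map $x\mapsto\delta_x$. For (ii)$\Rightarrow$(iii) I would argue by contradiction: if $\int_L h\,\de\nu_n-h(n)\not\to 0$ for some $h\in C(L')$, fix $\eps>0$ and an infinite $N\sub\omega$ with $|\int_L h\,\de\nu_n-h(n)|\ge\eps$ for $n\in N$, and take a free ultrafilter $\UU$ on $N$. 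As $L'$ is compact, $x=\lim_\UU n$ exists, and since $\omega$ is discrete $x\in L$; continuity of $\varphi$ then gives $\lim_\UU\nu_n=\lim_\UU\varphi(n)=\varphi(x)=\delta_x$. Testing against $h|L$ yields $\lim_\UU\int_L h\,\de\nu_n=h(x)$, while $\lim_\UU h(n)=h(x)$, so $\lim_\UU(\int_L h\,\de\nu_n-h(n))=0$, contradicting the choice of $N$.

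For (iii)$\Rightarrow$(ii) I would set $\varphi(x)=\delta_x$ on $L$ and $\varphi(n)=\nu_n$ on $\omega$ and verify $weak^*$-continuity at an arbitrary $x\in L$. Since $\varphi|L$ is the continuous Dirac map, a routine net-splitting argument reduces the check to nets $(n_\alpha)$ in $\omega$ with $n_\alpha\to x$; such a net eventually leaves every finite subset of $\omega$, so the sequential convergence in (iii) transfers to it. Given $g\in C(L)$, extend it by the Tietze theorem to $h\in C(L')$ with $h|L=g$; then $\int_L g\,\de\nu_{n_\alpha}=\int_L h\,\de\nu_{n_\alpha}$, and (iii) together with $h(n_\alpha)\to h(x)=g(x)$ forces $\int g\,\de\varphi(n_\alpha)\to g(x)=\int g\,\de\delta_x$. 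The main obstacle throughout is that $M_r(L)$ need not be $weak^*$-metrizable, so all limiting arguments must be run through nets or ultrafilters rather than sequences, and one must repeatedly invoke the defining feature of $\cde(L)$ that every accumulation point of $\omega$ lies in $L$.
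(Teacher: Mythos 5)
Your proof is correct and takes essentially the same route as the paper: your Riesz-representation map $\varphi(y)$ is exactly the paper's $E^*\delta_y$, and your ultrafilter argument for (ii)$\Rightarrow$(iii) is the paper's accumulation-point argument (applied to the bad set $N$) in slightly different clothing. The only difference is organizational --- you prove (i)$\Leftrightarrow$(ii) and (ii)$\Leftrightarrow$(iii) separately, while the paper runs the cycle (i)$\to$(ii)$\to$(iii)$\to$(i), defining $Eg(n)=\nu_n(g)$ directly in the last step --- which is cosmetic.
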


\begin{proof}
$(i)\rightarrow (ii)$. Let $\varphi(x)=E^*\delta_x$, for $x\in L'$. Clearly, the map $\varphi$ is continuous and takes values in $M_r(L)$. If $x\in L$ then, for any $f\in C(L)$,
$E^*\delta_x(f)=Ef(x)=f(x)$, hence $\varphi(x)=\delta_x$.

$(ii)\rightarrow (iii)$.
Define $\nu_n=\varphi(n)$. Take any $f\in C(L')$ and $g=f|L\in C(L)$.
Then
\[\nu_n(f)-\delta_n(f)=\nu_n(g)-f(n)=\varphi(n)(g)-f(n)\to 0.\]
Indeed,
if the set $N=\{n\in\omega: |\varphi(n)(g)-f(n)|\ge\eps\}$ were infinite, for some $\eps>0$,  then it would have an accumulation point $t\in L$ and
$|\varphi(t)(g)-f(t)|=|g(t)-f(t)|\ge \eps$, a contradiction.

$(iii)\rightarrow (i)$. We can extend a function $g\in C(L)$ to $Eg\in C(L')$ setting  $Eg(n)=\nu_n(g)$ for $n\in\omega$.
By (iii) $Eg$ is indeed continuous, and $E$ is a bounded operator since $\nu_n$ are bounded.
\end{proof}

The following result is essentially due to Kubi\'s, see \cite{KKL11}; recall that a space $L$ supports a strictly positive measure
if there is $\mu\in P(L)$ such that $\mu(U)>0$ for every nonempty open $U\sub L$.

\begin{theorem}\label{Kubis}

\begin{enumerate}[(a)]
\item
If $\gamma\omega$ is a tame compactification of $\omega$ then its remainder $\gamma\omega\!\sm\!\omega$ supports
a strictly positive measure.
\item
Let $K$ be a compact space if weight $\omega_1$ which does not support a measure.
Then there is a nontrivial twisted sum of $c_0$ and $C(K)$.
\end{enumerate}
\end{theorem}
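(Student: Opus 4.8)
Part (a) should follow by combining the characterizations already available. Assume $\gamma\omega$ is tame, so by definition the natural copy of $c_0$ in $C(\gamma\omega)$ is complemented, which by Lemma \ref{p:1} means there is an extension operator $E:C(K)\to C(\gamma\omega)$, where $K=\gamma\omega\sm\omega$. By Lemma \ref{p:2}, applied with $L=K$ and $L'=\gamma\omega$, this gives a continuous map $\varphi:\gamma\omega\to M_r(K)$ with $\varphi(x)=\delta_x$ for $x\in K$; equivalently a sequence $(\nu_n)$ in $M(K)$, uniformly bounded in norm, with $\nu_n-\delta_n\to 0$ weak$^*$ in $M(\gamma\omega)$. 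The plan is to manufacture a strictly positive measure on $K$ out of the measures $\nu_n$. The natural candidate is an averaged measure of the form $\mu=\sum_n 2^{-n}|\nu_n|$ (or a weak$^*$-cluster point of Ces\`aro averages), suitably normalized to lie in $P(K)$. I would verify strict positivity as follows: given a nonempty open $U\subseteq K$, I want to show $\mu(U)>0$. Since $\omega$ is dense in $\gamma\omega$, the open set $U$ (or an open set in $\gamma\omega$ meeting $K$ in a neighborhood of a point of $U$) contains infinitely many integers $n$; for each such $n$, the condition $\nu_n-\delta_n\to 0$ forces $\nu_n$ to put mass near the point of $K$ that $n$ accumulates to, so $|\nu_n|(U)$ stays bounded away from $0$ along a subsequence, whence $\mu(U)>0$. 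The delicate point is matching up the topology of $\gamma\omega$ near the remainder with the location of the mass of $\nu_n$, so that ``$n$ approaches $K$'' really yields ``$\nu_n$ charges a neighborhood in $K$''; this is the step requiring care.

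Part (b) should be a direct contrapositive application of part (a). Suppose, for contradiction, that every twisted sum of $c_0$ and $C(K)$ is trivial. Since $K$ has weight $\omega_1$, it is separable in a suitable sense, or at least one can realize $K$ as the remainder of a compactification of $\omega$: here the weight hypothesis $\omega_1$ is what lets us list a subbase (or a suitable separating family) of size $\omega_1$ and build a compactification $\gamma\omega\in\cde(K)$ whose remainder is homeomorphic to $K$. Given such a $\gamma\omega$, triviality of the twisted sum $0\to c_0\to C(\gamma\omega)\to C(K)\to 0$ means, by Lemma \ref{p:1}, that there is an extension operator, i.e.\ $\gamma\omega$ is tame. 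Then part (a) yields a strictly positive measure on $K=\gamma\omega\sm\omega$, contradicting the assumption that $K$ supports no measure. Hence some twisted sum is nontrivial.

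The main obstacle is the construction, in part (b), of a compactification $\gamma\omega$ with remainder homeomorphic to $K$ from the weight hypothesis alone. For a separable $K$ this is routine (embed $\omega$ as a countable dense-type set and take the closure, as in Proposition \ref{p:12}), but for a general space of weight $\omega_1$ without a supported measure one must produce a countable discrete extension $L'\in\cde(K)$ with $L'\sm L$ dense, and verify that the remainder is exactly $K$ rather than a quotient or a larger space. I expect this to rest on a careful choice of the extension: one picks a separating sequence of continuous functions (or clopen sets, if $K$ is zerodimensional) indexed along a tree of height governed by $\omega_1$, and attaches the integers so that their accumulation points sweep out all of $K$. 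Verifying strict positivity in part (a) is the technical heart, but the weight-$\omega_1$ realization of $K$ as a remainder is the structural step where the hypothesis is genuinely used, and where I would focus the most attention.
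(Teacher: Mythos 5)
Your part (a) is essentially the standard argument, and the ``delicate point'' you flag does close: given nonempty open $U\subseteq K$, pick $x\in U$ and open $W\ni x$ in $\gamma\omega$ with $\overline{W}\cap K\subseteq U$, and a Urysohn function $f\in C(\gamma\omega)$ with $f(x)=1$, $0\le f\le 1$, $f\equiv 0$ off $W$. Density of $\omega$ in $\gamma\omega$ gives infinitely many $n$ with $f(n)>1/2$; testing the weak$^*$ convergence $\nu_n-\delta_n\to 0$ against $f$ yields $\nu_n(f|K)>1/4$ for infinitely many such $n$, and since $f|K$ vanishes off $U$ this forces $|\nu_n|(U)\ge 1/4$ along that subsequence, so your candidate $\sum_n 2^{-n}|\nu_n|$ is indeed strictly positive. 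This is in line with the proofs the paper cites for (a) (\cite[Theorem 17.3]{KKL11}, \cite[Theorem 5.1]{DP16}); the paper itself only gives the citation.

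Part (b), however, has a genuine gap exactly where you place ``the most attention'': producing a compactification $\gamma\omega$ with remainder homeomorphic to $K$ from the hypothesis $w(K)=\omega_1$. The missing idea is the Parovi\v{c}enko theorem, which the paper invokes: every compact space of weight at most $\omega_1$ is a continuous image of $\omega^*=\beta\omega\setminus\omega$, and given a continuous surjection $q:\omega^*\to K$ one obtains $\gamma\omega$ as the quotient of $\beta\omega$ collapsing the fibers of $q$. Your proposed substitute --- listing a separating family of size $\omega_1$ and ``attaching the integers so that their accumulation points sweep out all of $K$'' --- is not a construction: a weight-$\omega_1$ space need not be separable (so nothing like Proposition \ref{p:12} applies), and being the remainder of a compactification of $\omega$ is \emph{equivalent} to being a continuous image of $\omega^*$, a nontrivial fact at weight $\omega_1$ that is sensitive to the weight bound (it can consistently fail already for compacta of weight $\con$ when $\con>\omega_1$). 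So without citing or re-proving Parovi\v{c}enko, your argument does not go through; with it, the rest of your reasoning is fine, and in fact no contradiction is needed --- (a) directly shows $\gamma\omega$ is not tame since its remainder $K$ supports no measure, and Lemma \ref{p:1} then exhibits $C(\gamma\omega)$ as a nontrivial twisted sum of $c_0$ and $C(K)$, which is precisely the paper's proof.
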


\begin{proof}
The first assertion can be easily derived from Lemma \ref{p:2}, see \cite[Theorem 17.3]{KKL11} or \cite[Theorem 5.1]{DP16}.

By the Parovi\v{c}enko theorem, $K$ satisfying the assumptions from (b) is a continuous image of $\beta\omega\!\sm\!\omega$ so there is a compactification
$\gamma\omega$ which remainder is homeomorphic to $K$. It follows from  (a) that  the compactification  $\gamma\omega$ is not tame so,
by Lemma \ref{p:1},  $C(\gamma\omega)$ is a nontrivial twisted sum of  $c_0$ and $C(K)$.
\end{proof}

Recall that a compact space $K$  is an \emph{absolute retract} if $K$ is a retract of any compact space $L$ containing $K$ (equivalently, of any completely regular space $X$ containing $K$). Clearly, if $L'\in \cde{(L)}$ and $L$ is an absolute retract that, taking a retraction $r:L'\to L$, we
get a norm-one extension operator $E:C(L)\to C(L')$, where $Eg=g\circ r$.

We shall often discuss Boolean algebras and their Stone spaces, using
the classical Stone duality.
 Given an algebra $\fA$;   its Stone space (of all ultrafilters on $\fA$) is denoted by $\ult(\fA)$. If $K$ is compact and zerodimensional then
 $\clop(K)$ is the algebra of clopen subsets of $K$.

We write $M(\fA)$ for the space of all signed {\em finitely}  additive functions on an algebra $\fA$; likewise, for any $r\ge 0$, $M_r(\fA)$ denotes the family
of $\mu\in M(\fA)$ for which $\|\mu\|\le r$. Here, as usual, $\|\mu\|=|\mu|(X)$ and $|\mu|$ is the variation of $\mu$.
Recall that $M(\fA)$ may be identified with $M(\ult(\fA))$ because every $\mu\in M(\fA)$ defines via the Stone isomorphism an additive function
on $\clop(\fA)$ and such a function extends uniquely to a Radon measure.

Given any subfamily  $\mathcal{F}$ of an algebra $\fA$, we denote by $\la\mathcal{F}\ra$ the subalgebra of $\fA$ generated by $\mathcal{F}$.

\section{On twisted sums}\label{ots}

We show here that the triviality of every twisted sum of $c_0$ and $C(K)$ can be expressed, in some way, in terms of the existence
of extension operators.

\begin{definition}\label{ts:0.1}
We shall say that a compact space $K$ has the $^*$extension property if for every $L'\in\cde(M_1(K))$ there is a bounded operator
$E:C(K)\to C(L')$ such that $Eg(\nu)=\nu(g)$ for every $g\in C(K)$ and $\nu\in M_1(K)$.
\end{definition}

Let us note that $C(K)$ may be seen as a subspace of $C(M(K))$ by the usual identification of an element of a Banach space with the corresponding element of its second dual.
The operator $E$ as in Definition \ref{ts:0.1} will be called an $^*$extension operator, the one that extends $g\in C(K)$, treated as an element of $C(M(K))$,
 to a  continuous function on  $L'$.

\begin{lemma}\label{ts:0.3}
Given $K$ and $L'=K\cup\omega\in\cde(M_1(K))$, the following are equivalent

\begin{enumerate}[(i)]
\item there is an $^*$extension operator $E:C(K)\to C(L')$;
\item there is a bounded sequence $(\nu_n)_n$ in $M(K)$ such that
for every $g\in C(K)$, if $\widehat{g}\in C(L')$ is any function extending $g$, treated as a function on $M_1(K)$, then
\[\lim_n( \nu_n(g)-\widehat{g}(n))=0.\]
\end{enumerate}
\end{lemma}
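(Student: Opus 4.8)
The plan is to establish both implications directly, following the pattern of Lemma \ref{p:2} with $L=M_1(K)$, but keeping in mind that $E$ is asked to extend only those functions on $M_1(K)$ that arise from the canonical image of $C(K)$, namely the weak$^*$-continuous maps $\nu\mapsto\nu(g)$. Throughout I identify $g\in C(K)$ with this function on $M_1(K)$, and I use that the added points $\omega=L'\sm M_1(K)$ are isolated, so that $M_1(K)$ is closed in the compact Hausdorff (hence normal) space $L'$.

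For $(i)\Rightarrow(ii)$ I would set $\nu_n:=E^*\delta_n\in M(K)$, where $\delta_n\in M(L')$ is the point mass at the isolated point $n$ and $E^*\colon M(L')\to M(K)$ is the adjoint of $E$. Boundedness of the sequence is immediate from $\|\nu_n\|\le\|E^*\|=\|E\|$. The duality identity $\nu_n(g)=\langle g,E^*\delta_n\rangle=\langle Eg,\delta_n\rangle=Eg(n)$ shows that $Eg$ is itself one admissible extension $\widehat{g}$ of $g$; for any other extension $\widehat{g}\in C(L')$ the functions $\widehat{g}$ and $Eg$ agree on $M_1(K)$, so Lemma \ref{ts:0.25} gives $\lim_n(\widehat{g}(n)-Eg(n))=\lim_n(\widehat{g}(n)-\nu_n(g))=0$, which is precisely (ii).

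For $(ii)\Rightarrow(i)$ I would define a function $Eg$ on $L'$ by $Eg(\nu)=\nu(g)$ for $\nu\in M_1(K)$ and $Eg(n)=\nu_n(g)$ for $n\in\omega$. Linearity of $E$ and the bound $\|Eg\|\le\max(1,\sup_n\|\nu_n\|)\,\|g\|$ are routine; the real content is to verify $Eg\in C(L')$. To do this I would first invoke Tietze's theorem to extend the continuous function $\nu\mapsto\nu(g)$ from the closed set $M_1(K)$ to some $\widehat{g}\in C(L')$. Hypothesis (ii) applied to this particular $\widehat{g}$ gives $\lim_n(\nu_n(g)-\widehat{g}(n))=0$, so the difference $Eg-\widehat{g}$ vanishes on $M_1(K)$ and tends to $0$ along the isolated points. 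Such a function is continuous: given $\delta>0$, only finitely many isolated points satisfy $|Eg-\widehat{g}|\ge\delta$, and deleting them yields an open set, a neighbourhood of every point of $M_1(K)$, on which $|Eg-\widehat{g}|<\delta$. Hence $Eg=\widehat{g}+(Eg-\widehat{g})$ is continuous, and $E$ is the desired $^*$extension operator.

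The only genuinely delicate step is this last continuity argument: one cannot check continuity of $Eg$ at points of $M_1(K)$ straight from its definition, so one must import a Tietze extension $\widehat{g}$, apply (ii) for that specific $\widehat{g}$, and then exploit that a perturbation supported on the isolated points and vanishing in the limit leaves continuity intact. Once this is in place, the remaining verifications are a straightforward unwinding of the $E$--$E^*$ duality together with Lemma \ref{ts:0.25}.
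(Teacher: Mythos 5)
Your proposal is correct and follows essentially the same route as the paper: for $(i)\Rightarrow(ii)$ both take $\nu_n=E^*\delta_n$ and apply Lemma \ref{ts:0.25} to the two extensions $Eg$ and $\widehat{g}$, and for $(ii)\Rightarrow(i)$ both define $Eg(\nu)=\nu(g)$ on $M_1(K)$ and $Eg(n)=\nu_n(g)$ on $\omega$. Your Tietze-based verification that $Eg$ is continuous (a perturbation vanishing on $M_1(K)$ and tending to $0$ along the isolated points preserves continuity) merely spells out the step the paper asserts without detail, so it is a welcome elaboration rather than a different argument.
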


\begin{proof}
$(i)\to (ii)$.
Consider an $^*$extension operator $E:C(K)\to C(L')$ and the conjugate operator $E^*:M(L')\to M(K)$. We put $\nu_n=E^*\delta_n$ for $n\in\omega\sub L'$; then
$(\nu_n)_n$ is a bounded sequence in $M(K)$.

Take any $g\in C(K)$ and its extension $\widehat{g}\in C(L')$. Then $\nu_n(g)=Eg(n)$ so
\[ \nu_n(g)-\widehat{g}(n)=Eg(n)-\widehat{g}(n)\to 0,\]
by Lemma \ref{ts:0.25}, since $Eg$ and $\widehat{g}$ are two continuous extensions of the same function, of $g$ acting on $M_1(K)$, and $L'\in \cde(M_1(K))$.

For $(ii)\to (i)$ take any $g\in C(K)$, put $Eg(\nu)=\nu(g)$, for $\nu\in M_1(K)$, and define $Eg(n)=\nu_n(g)$, for  $n\in\omega$.
Then the function $Eg$ is continuous on $L'$.
\end{proof}

We shall also need the following general fact.

\begin{lemma}\label{ts:1}
Let $T:X\to Y$ be a bounded linear surjection between Banach spaces $X$ and $Y$. Then
\[T^*[Y^*]=\ker(T)^\perp=\{x^*\in X^*: x^*|\ker(T)\equiv 0\}.\]
\end{lemma}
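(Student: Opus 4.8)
The plan is to prove the standard duality identity $T^*[Y^*]=\ker(T)^\perp$ for a bounded linear surjection $T:X\to Y$. The inclusion $T^*[Y^*]\subseteq\ker(T)^\perp$ is immediate and requires no hypothesis beyond boundedness: if $y^*\in Y^*$ and $x\in\ker(T)$, then $(T^*y^*)(x)=y^*(Tx)=y^*(0)=0$, so $T^*y^*$ annihilates $\ker(T)$. The work lies entirely in the reverse inclusion, and this is where surjectivity is essential.

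For $\ker(T)^\perp\subseteq T^*[Y^*]$, I would take an arbitrary $x^*\in X^*$ vanishing on $\ker(T)$ and construct a functional $y^*\in Y^*$ with $T^*y^*=x^*$. Since $T$ is surjective, every $y\in Y$ has the form $y=Tx$ for some $x\in X$, and I define $y^*(y)=x^*(x)$. This is well defined precisely because $x^*|\ker(T)\equiv 0$: if $Tx_1=Tx_2$ then $x_1-x_2\in\ker(T)$, so $x^*(x_1)=x^*(x_2)$. Linearity of $y^*$ is then routine, and by construction $(T^*y^*)(x)=y^*(Tx)=x^*(x)$ for all $x$, giving $T^*y^*=x^*$.

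The main obstacle is verifying that the functional $y^*$ just constructed is actually \emph{bounded}, since the definition only gives it as an algebraic linear functional on $Y$. Here I would invoke the open mapping theorem: because $T$ is a bounded surjection between Banach spaces, it is open, so there is a constant $c>0$ such that for every $y\in Y$ one can choose a preimage $x$ with $\|x\|\le c\|y\|$. Selecting such an $x$ yields $|y^*(y)|=|x^*(x)|\le\|x^*\|\,\|x\|\le c\,\|x^*\|\,\|y\|$, which shows $y^*\in Y^*$. (Equivalently, one passes to the induced isomorphism $\widetilde{T}:X/\ker(T)\to Y$, which is bounded below by the open mapping theorem, and factors the functional through it.) This completes both inclusions and establishes the equality $T^*[Y^*]=\ker(T)^\perp$. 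I expect the proof to be short; the only genuine input is the open mapping theorem needed to upgrade algebraic linearity to norm-boundedness.
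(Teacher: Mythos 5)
Your proof is correct: the inclusion $T^*[Y^*]\subseteq\ker(T)^\perp$ is immediate, the well-definedness of $y^*$ follows exactly as you say from $x^*|\ker(T)\equiv 0$, and invoking the open mapping theorem (equivalently, factoring through the induced isomorphism $X/\ker(T)\to Y$) is precisely the right tool to upgrade the algebraically defined $y^*$ to a bounded functional. The paper states this lemma as a known general fact and provides no proof at all, so your argument is simply the canonical one that the authors chose to omit.
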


\begin{theorem}\label{ts:2}
If a compact space $K$ has the $^*$extension property then every twisted sum of $c_0$ and $C(K)$ is trivial.
\end{theorem}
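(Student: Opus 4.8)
The plan is to begin with an arbitrary twisted sum $0\to c_0\stackrel{j}{\to} X\stackrel{q}{\to} C(K)\to 0$ and manufacture from it a single countable discrete extension of $M_1(K)$ to which the $^*$extension property can be applied. After passing to an equivalent norm on $X$ (which affects neither the isomorphism type of $X$ nor the triviality of the sequence) I would assume that $j$ is an isometric embedding and that $q$ is a metric surjection; these normalizations are standard for twisted sums. Writing $Z=j(c_0)=\ker q$, Lemma \ref{ts:1} then gives $q^*[M(K)]=Z^\perp$, and the metric-surjection hypothesis makes $q^*:M(K)\to X^*$ an isometry onto $Z^\perp$. Consequently $q^*$ carries $M_1(K)$ homeomorphically (weak$^*$ to weak$^*$) onto $Z^\perp\cap B_{X^*}$, a weak$^*$-compact subset of the dual ball, which I henceforth identify with $M_1(K)$.

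Next I would choose Hahn--Banach extensions $f_n\in X^*$ of the coordinate functionals $e_n^*$ on $Z\cong c_0$, so that $f_n|_Z=e_n^*$ and $\|f_n\|=1$, and set $L'=q^*(M_1(K))\cup\{f_n:n\in\omega\}\subseteq B_{X^*}$ with the weak$^*$ topology. The crucial point — and the step I expect to be the main obstacle — is verifying that $L'\in\cde(M_1(K))$. Every weak$^*$ cluster point $\xi$ of $(f_n)$ satisfies $\xi(e_m)=\lim_n f_n(e_m)=\lim_n\delta_{nm}=0$ for each $m$, hence $\xi|_Z=0$ and $\|\xi\|\le 1$, so $\xi\in Z^\perp\cap B_{X^*}=q^*(M_1(K))$. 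Since each $f_n$ is nonzero on $Z$ and therefore lies off $Z^\perp$, this simultaneously shows that $\{f_n\}$ is relatively discrete, that each $f_n$ is isolated in $L'$, and that $L'$ is weak$^*$-closed, hence compact. Thus $L'$ is a genuine countable discrete extension of $M_1(K)$, with the isolated points $f_n$ identified with $\omega$.

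I would then apply the $^*$extension property to this $L'$ and invoke Lemma \ref{ts:0.3}, obtaining a bounded sequence $(\nu_n)$ in $M(K)$ such that $\nu_n(g)-\widehat g(n)\to 0$ for every $g\in C(K)$ and every $\widehat g\in C(L')$ extending $g$ viewed as a function on $M_1(K)$. The key is to feed in the right extension. Fix $x\in X$ with $qx=g$; the weak$^*$-continuous evaluation $\xi\mapsto \xi(x)$ restricts on $q^*(M_1(K))$ to $q^*\nu\mapsto (q^*\nu)(x)=\nu(qx)=\nu(g)$, i.e.\ to $g$ as a function on $M_1(K)$, and its value at the isolated point $f_n$ is exactly $f_n(x)$. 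Applying Lemma \ref{ts:0.3} to this extension gives $f_n(x)-\nu_n(g)\to 0$ for every $x\in X$, where $g=qx$.

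Finally I would assemble the splitting. Put $x_n^*=f_n-q^*\nu_n\in X^*$. Since $q^*\nu_n$ annihilates $Z$, we still have $x_n^*|_Z=f_n|_Z=e_n^*$, while $\|x_n^*\|\le 1+\sup_n\|\nu_n\|<\infty$. Moreover, for every $x\in X$, $x_n^*(x)=f_n(x)-\nu_n(qx)\to 0$ by the previous paragraph. Hence $P(x)=(x_n^*(x))_n$ defines a bounded operator $X\to c_0$ with $P|_Z=\mathrm{id}$, that is, a bounded projection of $X$ onto $Z$. This complements $Z$, so the twisted sum is trivial. The only genuinely delicate ingredients are the normalization making $q^*$ isometric (so that cluster points land precisely in $M_1(K)$ rather than in some larger ball) and the accompanying cluster-point analysis guaranteeing $L'\in\cde(M_1(K))$; once $L'$ is in hand, the $^*$extension property does all the remaining work through Lemma \ref{ts:0.3}.
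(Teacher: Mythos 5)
Your proof is correct and is essentially the paper's own argument: you construct the same countable discrete extension of $M_1(K)$ inside $(X^*,weak^*)$ from bounded lifts of the coordinate functionals $e_n^*$, verify via the same cluster-point analysis that it is a genuine element of $\cde(M_1(K))$, apply the $^*$extension property through Lemma \ref{ts:0.3} with the weak$^*$-continuous evaluations $\xi\mapsto\xi(x)$ serving as the canonical extensions, and obtain the splitting from the corrected functionals $f_n-q^*\nu_n$, exactly as the paper does with its $x_n^*$ and $z_n^*=T^*(r\nu_n)$. The only difference is cosmetic: where you renorm $X$ so that $q^*$ becomes an isometry onto $Z^\perp$ (a legitimate, standard normalization), the paper keeps the original norm and instead takes $L=T^*[M_r(K)]$ with $r$ large enough that $\{x^*\in Z^\perp:\|x^*\|\le r_0\}\subseteq L$, which absorbs the weak$^*$ cluster points of the lifted functionals without any renorming.
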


\begin{proof}
Fix a short exact sequence
\[0\to c_0\stackrel{i}{\rightarrow} X \stackrel{T}{\rightarrow} C(K)\to 0\,.\]

Let $Z=i[c_0]$. By $e_n$ we denote the $n$-th unit vector in $c_0$ and let $e_n^*\in \ell_1=(c_0)^*$ be the corresponding dual functional.
Put $x_n=i(e_n)$ for every $n$.
Then there is a norm bounded
sequence $(x_n^*)_n$ in $X^*$ such that $i^*x_n^*=e_n^*$. Suppose that  $\|x_n^*\|\le r_0$ for every $n$.

Note that the set $\{x_n^*: n\in\omega\}$ is $ weak^*$ is discrete. Let
\[ L=T^*[M_r(K)]\sub X^*,\]
 where $r>0$ is taken big enough so that $L$ contains $\{x^*\in Z^\perp: \|x^*\|\le r_0\}$.

We now consider  $L'=L\cup\{x_n^*: n\in\omega\}$ equipped with the $weak^*$ topology.
\medskip

\noindent {\sc Claim.} $L'$ is a countable discrete extension of $L$.
\medskip

Indeed, it is enough to notice that if $x^*$ is an accumulation point of $\{x_n^*: n\in\omega\}$ then $x^*\in Z^\perp$ but
this follows from the fact that for $n>k$
\[ x^*_n(i(e_k))=i^*x_n^*(e_k)=e_n^*(e_k)=0.\]

Let $L''=M_1(K)\cup\omega$. We define a topology on the set $L''$ in the following way:

Consider the mapping
\[ h: L''\to L'=T^*[M_r(K)]\cup \{x_n^*: n\in\omega\},\]
defined by $h(\nu)=T^*(r\nu)$ for $\nu\in M_1(K)$ and $h(n)=x_n^*$ for $n\in \omega$. Then $h$ is a bijection since
$T^*$ is injective and $x_n^*\neq x_k^*$ for $n\neq k$. We topologize $L''$ so that $h$ becomes a homeomorphism; clearly
$M_1(K)$ gets its usual $weak^*$ topology when treated as a subspace of $L''$.

Since $K$ has the $^*$extension property, by Lemma \ref{ts:0.3} applied for $L''$, there is a bounded sequence $(\nu_n)_n$ in $M(K)$ satisfying
\ref{ts:0.3}(ii). Let $z_n^*=T^*(r\nu_n)$ for $n\in\omega$. Then $(z_n^*)_n$ is a bounded sequence in $X^*$ and the following holds.
\medskip

\noindent {\sc Claim.} $z_n^*-x_n^*\to 0$ in the $weak^*$ topology of $X^*$.
\medskip

Take any $x\in X$; then (treating $x$ as an element of $X^{**}$), $x\circ h\in C(L'')$ and
\[ x\circ h(\nu) = T^*(r\nu)(x) = \nu(rTx),\]
for $\nu\in M_1(K)$.
This means that $x\circ h$ is an extension of $rTx\in C(K)$ treated as a function on $M_1(K)$. Therefore
\[ z_n^*(x)-x_n^*(x)=r\nu_n(Tx)-h(n)(x) = \nu_n(x\circ h) - x\circ h(n) \to 0,\]
as required.
\medskip

Define now
\[ P:X\to X,\quad Px=\sum_n \left(x_n^*(x)-z_n^*(x)\right)\cdot x_n.\]
Note that $Px_k=x_k$ since $x_n^*(x_k)=1$ if $n=k$ and is 0 otherwise; moreover,
$z_n^*(x_k)=0$ for every $n$ and $k$.
Using above Claim,  we conclude that $P$ is indeed a projection onto $Z$, and the proof is complete.
\end{proof}

\begin{remark}\label{ts:3}
Using the construction from the above proof we can show that if $X$ is a twisted sum of $c_0$ and $C(K)$ then $X$ is isomorphic to a subspace of
$C(L')$, where $L'$ is a countable discrete extension of $M_1(K)$.

\end{remark}

\section{Asymptotic measures on Boolean algebras}\label{am}

We consider here an algebra $\fA$ of subsets of some set $X$. In the sequel, $\fB$ (with possible indexes) always denotes a finite subalgebra of $\fA$.
We introduced in Section \ref{pr} the symbol $M(\fA)$ denoting the space of all signed finitely additive functions $\fA\to\er$ of bounded variation.

Given {\em any} real-valued partial functions $\vf,\psi$ on $\fA$  and an algebra $\fB$ contained in their domains we write
\[ \dist_\fB (\vf,\psi)=\sup_{B\in\fB} |\vf(B)-\psi(B)|.\]

The aim of this section is to analyze a certain property of Boolean algebras that is, as it is explained in the next section,
closely related to the (non)existence of nontrivial twisted sums
of $c_0$ and $C(K)$ spaces.

\begin{definition}\label{am:-1}
We shall say  that a Boolean algebra $\fA$ has the {\em approximation property} if for any sequence of  {arbitrary} functions $\vf_n:\fA\to [-1,1]$
satisfying
\[\inf_{\mu\in M_1(\fA)} \dist_\fB(\mu, \vf_n) \to 0,\]
for every finite algebra $\fB\sub\fA$,  there is a  bounded sequence $(\nu_n)$ in $M(\fA)$ such that
\[ \vf_n(a)-\nu_n(a)\to 0 \mbox{  for all }a\in \fA.\]
\end{definition}

Note that the condition on the asymptotic behavior of $\vf_n$ above means that $\vf_n$ is {\em asymptotically} a measure of norm $\le 1$ on a given finite algebra  $\fB$ (it satisfies the corresponding conditions up to some small constant whenever $n$ is large enough). By a standard diagonal argument one can check that every countable algebra $\fA$ has the approximation property.
We shall prove below that some uncountable algebras may also have such a property.

\begin{notation} \label{am:0}
We fix now  a sequence $(\vf_n)_n$ of arbitrary  set functions $\vf_n:\fA\to [-1,1]$ on an algebra $\fA$.
We shall  write $\mq(\fB)$ (or $\mq_r(\fB)$) for
the set of signed measures having rational values (and having the norm $\le r$, respectively).

For any $\fB$ and  $n$ we define
\[o_n(\fB)=\inf\{ \dist_\fB(\nu,\vf_n): \nu\in \mq_1(\fB)\}. \]
\end{notation}

Of course, to calculate the value of  $o_n(\fB)$ we might as well replace $\mq_1(\fB)$ by $M_1(\fB)$ (as in Definition \ref{am:-1}). It will be convenient, however, to
consider in the sequel measures on finite algebras having only rational values.

For the sake of the proof of Theorem \ref{am:6} below we need to introduce some additional parameters.
Given two subalgebras $\fB_1,\fB_2\sub\fA$ and $\nu_i\in M(\fB_i)$ for $i=1,2$, we say that $\nu_1,\nu_2$ are consistent if
$\nu_1(B)=\nu_2(B)$ for every $B\in\fB_1\cap \fB_2$.

\begin{definition}\label{am:1}
Fix $r> 1$.
We define $O_n(\fB)$ (positive real or $+\infty$), for $n\in\omega$ and finite $\fB\sub\fA$, by induction on $|\fB|$.
Set $O_n(\fC)=1/(n+1)$ for every $n$ in the case of the trivial algebra $\fC$. Suppose that $O_n(\fC)$ has been defined for
every proper subalgebra $\fC$ of $\fB$.
Then we put
\[O_n(\fB)=C_0+o_n(\fB)+ 1/(n+1),\]
where $C_0$ is the infimum  of $C>0$ such that

\begin{enumerate}[(i)]
\item whenever $\fB_1,\fB_2\sub \fB$ are proper subalgebras and we are given two consistent
measures  $\nu_i\in \mq_r(\fB_i)$ that  satisfy  $\dist_{\fB_i} (\nu_i,\vf_n) < O_n (\fB_i)$ for $i=1,2$,
then there is $\mu\in \mq_r(\fB)$ such that $\mu$ is a common extension of $\nu_1$ and $\nu_2$ and
$\dist_\fB(\mu,\vf_n)\le C$;

\item for any proper subalgebra $\fC\sub\fB$ and a measure $\nu\in \mq_r(\fC)$ with  $\dist_{\fC} (\nu,\vf_n) < O_n(\fC)$ there is
 an extension of $\nu$ to $\mu\in \mq(\fB)$ such that $\|\mu\|\le\max(\|\nu\|,1)$ and  $\dist_{\fB} (\mu,\vf_n) \le  C$.
\end{enumerate}
\end{definition}

\begin{remark}\label{am:2}
The definition of $O_n$ depends on the chosen parameter $r$; we write $O_n$ rather than $O^r_n$ for simplicity.
Obviously, $o_n(\fB)\le O_n(\fB)$ for all $n$ and $\fB$.

Note that in case of $(ii)$ above the set of such $\mu$ is always nonempty, see Lemma \ref{ap:2} from Appendix at the end of the paper.
However, there may be no common extension of $\nu_1,\nu_2$ considered in $(i)$ which would satisfy $\|\mu\|\le r$;
in such a case we understand that  $O_n(\fB)=+\infty$.
\end{remark}

\begin{lemma}\label{am:3}
If $\lim_n o_n(\fB)=0$ for every finite algebra $\fB\sub\fA$ then $\lim_n O_n(\fB)=0$ for every such $\fB$.
\end{lemma}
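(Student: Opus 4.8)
The plan is to reduce the statement to showing that the quantity $C_0=C_0(n,\fB)$ appearing in Definition \ref{am:1} tends to $0$ as $n\to\infty$, for every fixed finite $\fB\sub\fA$; since $O_n(\fB)=C_0+o_n(\fB)+1/(n+1)$ and both $o_n(\fB)\to 0$ (by hypothesis) and $1/(n+1)\to 0$, this suffices. I would prove $C_0(n,\fB)\to 0$ by induction on $|\fB|$, the trivial algebra being the base case where $O_n\equiv 1/(n+1)\to 0$. For the inductive step I fix $\fB$ and assume $O_n(\fC)\to 0$ for every finite algebra of smaller cardinality, in particular for every proper subalgebra of $\fB$ and every pair $\fB_1,\fB_2$ occurring in clause (i). The basic tool is a global approximant: using $o_n(\fB)\to 0$, I choose $\sigma_n\in\mq_1(\fB)$ with $\dist_\fB(\sigma_n,\vf_n)\le o_n(\fB)+1/(n+1)=:\alpha_n\to 0$.

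For clause (ii), let $\fC\sub\fB$ be proper and $\nu\in\mq_r(\fC)$ satisfy $\dist_\fC(\nu,\vf_n)<O_n(\fC)$. Since $\dist_\fC(\sigma_n,\vf_n)\le\alpha_n$ and $O_n(\fC)\to 0$ by induction, the measures $\nu$ and $\sigma_n|_\fC$ are close: $\dist_\fC(\nu,\sigma_n)\le O_n(\fC)+\alpha_n=:\delta_n\to 0$, uniformly in $\nu$. I then correct $\sigma_n$ on the atoms of $\fB$ so that its $\fC$-marginals become exactly $\nu$, adding the discrepancy $\nu(D)-\sigma_n(D)$ inside each atom $D$ of $\fC$. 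The resulting $\mu\in\mq(\fB)$ extends $\nu$, is rational-valued, and differs from $\sigma_n$ by a measure of variation at most $m'\delta_n$, where $m'$ is the fixed number of atoms of $\fC$; hence $\dist_\fB(\mu,\vf_n)\le\alpha_n+m'\delta_n\to 0$.

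The delicate point is the norm requirement $\|\mu\|\le\max(\|\nu\|,1)$, which carries no slack when $\|\nu\|\le 1$: the naive correction only yields $\|\mu\|\le\|\sigma_n\|+m'\delta_n\le 1+m'\delta_n$. I expect this to be the main obstacle. The way around it is to observe that the excess $\|\mu\|-\max(\|\nu\|,1)$ is at most $m'\delta_n$ and never exceeds the internal cancellation $\|\mu\|-\|\nu\|$ available within the atoms of $\fC$. One can therefore shave the excess by transferring mass between atoms of $\fB$ of opposite sign inside a single atom $D$, which lowers $\|\mu\|$ while leaving the marginal $\mu(D)=\nu(D)$ fixed and altering $\mu$ on the members of $\fB$ by only $O(\delta_n)$. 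This keeps $\dist_\fB(\mu,\vf_n)$ small while bringing the norm down to $\max(\|\nu\|,1)$; the requisite norm-controlled extension is precisely what Lemma \ref{ap:2} supplies, and I would phrase the redistribution as an application of it.

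Clause (i) is handled analogously but is easier, since the admissible norm is $r>1$. Given consistent $\nu_1\in\mq_r(\fB_1)$ and $\nu_2\in\mq_r(\fB_2)$ with $\dist_{\fB_i}(\nu_i,\vf_n)<O_n(\fB_i)$, hence close to $\sigma_n$ on $\fB_1$ and $\fB_2$, the consistency $\nu_1=\nu_2$ on $\fB_1\cap\fB_2$ is exactly the solvability condition that lets me correct $\sigma_n$ by one small-variation measure $\rho\in\mq(\fB)$ whose $\fB_i$-marginal equals $\nu_i-\sigma_n|_{\fB_i}$; then $\mu=\sigma_n+\rho$ is a common rational extension with $\dist_\fB(\mu,\vf_n)\le\alpha_n+\|\rho\|\to 0$. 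As $\|\rho\|\to 0$ and $\|\sigma_n\|\le 1$, for large $n$ we get $\|\mu\|\le 1+\|\rho\|<r$, so $\mu\in\mq_r(\fB)$ as required (for small $n$ the bound $O_n(\fB)$ may be large, which is harmless). In both clauses the estimates are uniform over the finitely many choices of subalgebras and over all admissible $\nu,\nu_i$, since the bounds depend only on $\alpha_n$, $\delta_n$, and the combinatorics of $\fB$; taking the maximum over these finitely many configurations yields a single $C=C(n,\fB)\to 0$ witnessing $C_0(n,\fB)\to 0$, which completes the induction.
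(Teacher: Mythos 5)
Your proposal is correct and follows essentially the same route as the paper: induction on $|\fB|$, a global approximant in $\mq_1(\fB)$ witnessing that $o_n(\fB)$ is small, and the Appendix lemmas to produce the required extensions --- your clause (i) correction $\rho$ is exactly Lemma \ref{ap:1} (whose proof, via the Basile--Rao--Shortt bound, supplies both the small variation and the rationality you assert), and your clause (ii) redistribution is, as you say yourself, Lemma \ref{ap:2}. The only difference is cosmetic bookkeeping: the paper fixes $\eps$ and a single $\delta$ with $4N\delta<r-1$ and $(4N+1)\delta<\eps$, whereas you let the error parameters $\alpha_n,\delta_n$ tend to $0$.
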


\begin{proof}
We argue by induction  on $|\fB|$. If $\fB$ is trivial then $O_n(\fB)=1/(n+1)$. Suppose that $\fB$ is nontrivial and $\lim_n O_n(\fC)=0$ for any proper subalgebra $\fC$ of $\fB$.

Let $N\ge 2$ be the number of atoms of $\fB$. Fix  $\eps>0$ and take $\delta>0$ such that
\[ 4N\delta<r-1 \mbox{ and } (4N+1)\delta<\eps.\]
By the inductive assumption and the fact that  $\lim_n o_n(\fB)=0$ there is $n_0$ such that for $n\ge n_0$ we have $o_n(\fB)<\delta$ and
$O_n(\fC)<\delta$ for all proper subalgebras $\fC$ of $\fB$. We shall check that then $O_n(\fB)\le\eps$ whenever $n\ge n_0$. Given such $n$, we will verify that $C=\eps$ satisfies conditions (i-ii) of Definition \ref{am:1}.

Consider a pair $\fB_1,\fB_2\sub \fB$ of proper subalgebras and a pair $\nu_i\in\mq_r(\fB_i)$ of consistent measures as in
Definition \ref{am:1}(i).
Take $\lambda\in \mq_1(\fB)$ witnessing $o_n(\fB)<\delta$.
Then $\dist_{\fB_i}(\nu_i,\lambda)<2\delta$ so by Lemma \ref{ap:1} there is a common extension of $\nu_1,\nu_2$ to a measure
$\lambda'\in \mq(\fB)$ such that $\|\lambda-\lambda'\|<4N\delta$. This implies
\[ \|\lambda'\|\le \|\lambda\|+ 4N\delta\le 1 +r-1=r;\]
\[ \dist_\fB(\lambda',\vf_n)\le \dist_\fB(\lambda',\lambda)+\dist_\fB(\lambda, \vf_n)< 4N\delta +\delta< \eps,\]
as required.

Consider now $\fC$ and $\nu\in\mq_r(\fC)$ as in  Definition \ref{am:1}(ii).

Let, again,   $\lambda\in \mq_1(\fB)$ witnesses that $o_n(\fB)<\delta$.
We have $\dist_\fC(\nu,\lambda)<2\delta$ so
Lemma \ref{ap:2} gives us a measure $\lambda'\in\mq(\fB)$ extending $\nu$ with $\|\lambda'\|\le\max(\|\nu\|, 1)$ and such that
$\dist_\fB(\lambda',\lambda)<6\delta$. It follows that  $\dist_\fB(\lambda',\vf_n)<7\delta<(4N+1)\delta <\eps$, as required.
\end{proof}


As we mentioned in Remark \ref{am:2}, there is a problem of controlling the norm of a common extension of two consistent measures.
To handle this we need to introduce another property of a given algebra $\fA$.

\begin{definition}\label{am:5}
Let us say that a Boolean algebra $\fA$ has $\lep(r)$ (local extension property) for some $r > 1$ if there is a family $\bB$ of finite subalgebras of $\fA$
such that

\begin{enumerate}[(i)]
\item for every finite algebra $\fC\sub\fA$ there is $\fB\in\bB$ with $\fB\supseteq \fC$;
\item whenever $\bB'\sub \bB$ is uncountable then there are distinct $\fB_1,\fB_2\in\bB'$ such that
any pair of consistent measures $\nu_i\in \mq_1(\fB_i)$ admits a common extension to a measure $\nu\in \mq_r(\langle \fB_1\cup\fB_2\rangle )$.
\end{enumerate}
\end{definition}

We are now ready for the main result of this section which requires Martin's axiom.  For the reader's convenience we recall briefly
the statement of the axiom in the form that we shall use. We essentially follow here the terminology of Fremlin \cite{Fr84};
in particular, in a partially ordered set all the properties mentioned here are defined `upwards'.
Let $(\bP, \leq)$ be a partially ordered set; elements  of $\bP$ are often called  conditions.

A set $\bD\sub\bP$ is {\em dense} if for every $p\in\bP$ there is $d\in\bD$ with
$p\leq d$. A set $\bG\sub \bP$ is {\em directed} if for every $p_1,p_2\in\bG$ there is $p_3\in\bG $ such that $p_1,p_2\leq p_3$.

A partially ordered set $\bP$ is said to satisfy $ccc$ (the countable chain condition) if every uncountable $P\sub \bP$ contains two distinct
conditions $p_1,p_2$ that are comparable, i.e.\ there is $q\in\bP$ such that $p_1,p_2\leq q$.

For a given cardinal number $\kappa$,  $\MA(\kappa)$ reads as follows:

\begin{axiom}
For every $ccc$ partially ordered set $\bP$ and a family $\{\bD_\xi: \xi<\kappa\}$ of its dense subsets there is a directed
set $\bG\sub\bP$ such that $\bG\cap \bD_\xi\neq\emptyset$ for every $\xi<\kappa$.
\end{axiom}

Recall that $\MA(\omega)$ is true and $\MA(\con)$ is false; the main  use of Martin's axiom stems from the fact that
$\MA(\omega_1)$ is relatively consistent, see \cite{Fr84} and references therein.
Recall also that $\MA$ is the commonly used  abbreviation  of the statement  `$\MA(\kappa)$ for every $\kappa<\con$'.

\begin{theorem}\label{am:6}
Suppose that $\fA$ is an algebra with $|\fA|=\kappa$. Suppose further  that $\fA$ has $\lep(r)$ for some $r> 1$ and that
$\ult(\fA)$ is separable.

Then, assuming     $\MA(\kappa)$, the algebra $\fA$ has the approximation property.
\end{theorem}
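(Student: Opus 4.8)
The plan is to construct the approximating sequence $(\nu_n)$ by a forcing-style argument: I will define a $ccc$ partial order whose conditions are finite fragments of the sought sequence, and then apply $\MA(\kappa)$ to a family of $\kappa$ dense sets to obtain a directed filter whose ``union'' yields the $\nu_n$. Throughout I may assume, by the observation following Notation \ref{am:0}, that the hypothesis on $(\vf_n)$ reads $o_n(\fB)\to 0$ for every finite $\fB\sub\fA$, so that Lemma \ref{am:3} applies and gives $O_n(\fB)\to 0$ for every finite $\fB$ (here $O_n$ is computed with the parameter $r$ witnessing $\lep(r)$).

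First, the poset. A condition $p$ will consist of a finite set $F_p\sub\omega$ together with, for each $n\in F_p$, an algebra $\fB_{p,n}\in\bB$ and a rational measure $\nu_{p,n}\in\mq_r(\fB_{p,n})$ satisfying $\dist_\fC(\nu_{p,n},\vf_n)<O_n(\fC)$ for \emph{every} subalgebra $\fC\sub\fB_{p,n}$. I order $\bP$ by declaring that $q$ extends $p$ when $F_q\supseteq F_p$, $\fB_{q,n}\supseteq\fB_{p,n}$ and $\nu_{q,n}$ restricts to $\nu_{p,n}$ for each $n\in F_p$. The dense sets I need are $D_{n,\fC}=\{p:n\in F_p,\ \fC\sub\fB_{p,n}\}$, for $n\in\omega$ and $\fC$ a finite subalgebra of $\fA$; since $\bB$ is cofinal by \ref{am:5}(i) there are at most $\kappa$ of them. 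That each $D_{n,\fC}$ is dense is the extension step: given a condition I enlarge $\fB_{p,n}$ to a member of $\bB$ containing $\fC$ and extend $\nu_{p,n}$ accordingly. The existence of such an extension, keeping the norm $\le r$ and the bound $\dist<O_n$ alive at every intermediate subalgebra, is precisely what the inductive clauses (i)--(ii) of Definition \ref{am:1} are built to furnish; establishing this ``hereditary extension lemma'' by induction on $|\fB_{p,n}|$ (using (ii) to split atoms and (i) to reconcile the newly created subalgebras) is the first technical point.

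Granting $ccc$, $\MA(\kappa)$ produces a directed $\bG\sub\bP$ meeting every $D_{n,\fC}$ and every $\{p:n\in F_p\}$. For fixed $n$ the family $\{\nu_{p,n}:p\in\bG\}$ is consistent and directed and, by the choice of the $D_{n,\fC}$, defined on a cofinal collection of finite subalgebras; its common refinement is a finitely additive $\nu_n\in M_r(\fA)$. For the approximation, fix $a\in\fA$ and $\fB_0\in\bB$ with $a\in\fB_0$; meeting $D_{n,\fB_0}$ forces $\fB_0\sub\fB_{p,n}$ for some $p\in\bG$, and the hereditary bound gives $|\vf_n(a)-\nu_n(a)|\le\dist_{\fB_0}(\nu_n,\vf_n)<O_n(\fB_0)$, which tends to $0$ by Lemma \ref{am:3}. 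This is exactly the approximation property. It is to make this last inequality hold with $O_n(\fB_0)$, rather than with the larger and uncontrolled $O_n(\fB_{p,n})$, that the conditions are required to obey the bound at \emph{all} subalgebras.

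The heart of the proof, and the step I expect to be hardest, is the $ccc$ of $\bP$; this is where both $\lep(r)$ and the separability of $\ult(\fA)$ are used. Given uncountably many conditions $p_\xi$, I would first thin out by a $\Delta$-system / pigeonhole argument: as $F_{p_\xi}\sub\omega$ and there are only countably many rational measures on a fixed finite algebra, I may assume all $F_{p_\xi}=F$ coincide and that, for each $n\in F$, the data $(\fB_{\xi,n},\nu_{\xi,n})$ share a fixed combinatorial type. Separability enters by representing each atom of a $\fB_{\xi,n}$ through the countable dense set of ultrafilters of $\ult(\fA)$, which lets me arrange that for distinct $\xi,\eta$ the intersection $\fB_{\xi,n}\cap\fB_{\eta,n}$ is a fixed root on which $\nu_{\xi,n}$ and $\nu_{\eta,n}$ agree, i.e.\ are \emph{consistent}. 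Applying \ref{am:5}(ii) successively over the finitely many $n\in F$ to the uncountable families $\{\fB_{\xi,n}\}$, I then obtain a pair $\xi\ne\eta$ for which each consistent pair $\nu_{\xi,n},\nu_{\eta,n}$ admits a common extension, and Definition \ref{am:1}(i) upgrades this to one still obeying the $O_n$-bound; the union of these extensions over $n\in F$ is a single condition refining $p_\xi$ and $p_\eta$. The delicate points are coordinating the consistency-on-roots (from separability) with the amalgamation (from $\lep(r)$) simultaneously for all $n\in F$, verifying that the amalgamated measures again satisfy the hereditary bound, and — subtlest of all — the norm bookkeeping: $\lep(r)$ amalgamates \emph{norm-one} measures into norm-$r$ ones, whereas the conditions already carry norm-$r$ measures, so establishing that their amalgam still has norm $\le r$ (equivalently, that $O_n(\la\fB_{\xi,n}\cup\fB_{\eta,n}\ra)<+\infty$) must be done through clause (i) of Definition \ref{am:1} rather than by invoking $\lep$ directly.
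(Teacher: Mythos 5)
Your overall architecture --- forcing with finite approximations to the sequence $(\nu_n)$ and invoking $\MA(\kappa)$ via a family of $\kappa$ dense sets --- is indeed the paper's, but both of your key mechanisms break down. First, in the $ccc$ argument, your plan to certify that the amalgam of two norm-$r$ measures still has norm $\le r$ ``through clause (i) of Definition \ref{am:1}'' is circular: by Remark \ref{am:2}, $O_n(\fB)=+\infty$ exactly when some consistent pair as in (i) admits no common extension in $\mq_r(\fB)$, so clause (i) presupposes the finiteness you need and can never establish it. The paper uses two different devices. Separability enters not merely to force consistency on roots but to thin the antichain so that, for each old index $i\le n$, the measures $\nu_i^p$ are restrictions, with equal norm, of one fixed $\widetilde{\nu}_i\in M^S$, the countable family of rational combinations of Dirac measures at the dense points; then $\widetilde{\nu}_i|\fB$ is itself a common extension of norm $\le r$, with no appeal to $\lep$ at all. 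For the indices created during amalgamation, the paper chooses measures of norm $\le 1$ on the root and extends them by \ref{am:1}(ii) keeping norm $\le\max(\|\nu\|,1)=1$ --- which is precisely why that clause carries this norm bound --- and these norm-one measures are the ones to which $\lep(r)$ legitimately applies when $O_i(\fB)=+\infty$. Second, your conditions carry a separate algebra $\fB_{p,n}$ for each $n\in F_p$, and ``applying \ref{am:5}(ii) successively over the finitely many $n\in F$'' does not work: $\lep$ produces only a single good pair inside each uncountable family, and the graphs of amalgamable pairs for different coordinates can be disjoint (a Sierpi\'nski-type coloring gives two graphs on $\omega_1$, each containing an edge inside every uncountable set, whose intersection is empty). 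The paper's conditions carry one algebra $\fB\in\bB$ serving all indices $i\le n$, so a single application of $\lep$ suffices.

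There is also a defect in the poset itself: the hereditary requirement $\dist_\fC(\nu_{p,n},\vf_n)<O_n(\fC)$ for \emph{every} subalgebra $\fC\sub\fB_{p,n}$ can be unsatisfiable. At the trivial subalgebra $O_n=1/(n+1)$ by definition, while $\dist(\nu,\vf_n)\ge|\vf_n(\emptyset)|$ for every measure $\nu$; the hypothesis of Definition \ref{am:-1} gives $|\vf_n(\emptyset)|\to 0$ but with no rate, so with, say, $\vf_n(\emptyset)=1/\log(n+2)$ your dense sets $D_{n,\fC}$ are empty for almost all $n$ and the construction yields no $\nu_n$ whatsoever. Relatedly, \ref{am:1}(ii) bounds only the distance at the top algebra of the extension, not at all of its subalgebras, so the ``hereditary extension lemma'' you hope to prove is not furnished by the definitions. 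The paper requires the bound only at the top, $\dist_\fB(\nu_i,\vf_i)<O_i(\fB)$, which is always satisfiable because $O_i(\fB)\ge o_i(\fB)+1/(i+1)$, and it secures the final convergence by a different mechanism: each condition carries an extra integer $k$ together with the promise $O_m(\fB)<1/k$ for all $m\ge n$, and its Claim A propagates the error bound $1/k$ on old sets along chains of simple extensions, which is what makes $\mu_n(A)-\vf_n(A)\to 0$ in the end.
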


\begin{proof}
Let  us fix a sequence
$\vf_n:\fA\to [-1,1]$  of functions  as in Definition \ref{am:-1}
Our task is to construct a bounded sequence $(\nu_n)$ in $M(\fA)$ satisfying the assertion
of \ref{am:-1}. Note that $\lim_n o_n(\fB)=0$ for every finite subalgebra $\fB$ of $\fA$, see Notation \ref{am:0} and the remark after it.
We use below the parameters $O_n$ introduced in Definition \ref{am:1} (for $r$ from the assumption of the theorem).
Let $\bB$ be a family of subalgebras granted by $\lep(r)$.

We consider a partially ordered set $\bP$ of conditions
\[p=(\fB, n, (\nu_i)_{i \le n},k),\mbox{ where }\]
\begin{enumerate}[(i)]
\item $\fB\in\bB$  and $n,k$ are positive integers;
\item for every $i\le n$, the measure $\nu_i$ is in $\mq_r(\fB)$;
\item $\dist_\fB(\nu_i,\vf_i) < O_i(\fB)$ for any $i\le n$;
\item $O_m(\fB)<1/k$ for every $m\ge n$.
\end{enumerate}

Consider two conditions
\[p=(\fB,n, (\nu_i)_{i\le n},k), \quad p'=(\fB',n', (\nu_i')_{i\le n'},k')\in\bP.\]
We shall say that $p'$ is a simple extension of $p$ if $k'\ge k$ and

\begin{enumerate}[---]
\item either $\fB'=\fB$ and  $n'\ge n$, $\nu_i'=\nu_i$ for $i\le n$,
\item or $n'=n$, $\fB\sub \fB'$ and $\nu_i'$ extends $\nu_i$ for every $i\le n$.
\end{enumerate}

Then we define a partial order on $\bP$ declaring $p\leq p'$ if there are $s\in\omega$ and a sequence $p_j$, $j=0,\ldots, s$,  in $\bP$ such that
$p_0=p$, $p_s=p'$ and $p_{j+1}$ is a simple extension of $p_j$ for every $j<s$. Note that $\leq$ is indeed a partial order on $\bP$.
\medskip

\noindent{\sc Claim A.}
Let $A\in\fB$ and  let $p,p'\in\bP $ be specified as above. If    $p\leq p'$
then
\[ |\nu_i'(A) -\vf_i(A)|< 1/k  \mbox{ whenever } n\le i\le n'.\]

Note that  if $p'$ is a simple extension of $p$ with $\fB'=\fB$ then for $i\ge n$ we have
\[ |\nu_i'(A) -\vf_i(A)|<O_i(\fB')=O_i(\fB)<1/k,\]
by $(iv)$. If $p'$ is a simple extension of $p$ with $n'=n$ then the inequality holds
as $\nu_i'(A)=\nu_i(A)$ for $i\le n$.
Hence the assertion follows by induction on the number of simple extensions
leading from $p$ to $p'$.
\medskip


\noindent{\sc Claim B.} $\bP$ is $ccc$.
\medskip

Consider an uncountable family $P\sub\bP$ of conditions
\[ p=(\fB^p,n^p, (\nu_i^p)_{i \le n^p}, k^p).\]
Shrinking $P$ if necessary, we can assume that $n^p=n$ and  $k^p=k$ are constant for $p\in P$.

Let $S$ be a countable dense subset of $\ult(\fA)$. Every  $x\in S$ defines a 0-1 probability measure $\delta_x\in M(\fA)$, where $\delta_x(B)=1$ iff $B\in x$.
Let $M^S$ be the countable family of all measures on $\fA$ that are rational linear combinations of $\delta_x$'s with $x\in S$.
Note that any measure $\nu\in\mq(\fB)$ on a finite algebra $\fB$ can be represented as a restriction of some $\widetilde{\nu}\in M^S$ to $\fB$, where
$\|\widetilde{\nu}\|=\|\nu\|$.

Using the above remark, thinning $P$ out again, we can assume that for every
$i\le n$ there is $\widetilde{\nu}_i\in M^S$  such that $\nu^p_i=\widetilde{\nu}_i|\fB^p$ and
 $\|\nu^p_i\|=\|\widetilde{\nu}_i|\fB^p\|$ for every $p\in P$.

Finally, we apply $\lep(r)$ to choose  distinct  $p,q\in P$ so that $\fB^p$ and $\fB^q$ have the property granted by Definition \ref{am:5}(ii).
We put  $\fB_0=\fB_1\cap \fB_2$ and, using \ref{am:5}(i)  choose $\fB\in\bB$ containing $\fB_1\cup\fB_2$.
By Lemma \ref{am:3} there is  $n_1\ge n$ such that $O_m(\fB)<1/k$ for every $m\ge n_1$.
We shall check that $p$ and $q$ have a common extension in $\bP$.

For every $i$ such that $n< i\le n_1$ we choose a measure $\pi_i\in \mq_1(\fB_0)$    such that
\[ \dist_{\fB_0}(\pi_i,\vf_i)<o_i(\fB_0)+1/(i+1)\le O_i(\fB_0),\]
and then by part (ii) of Definition \ref{am:1} extend $\pi_i$ to measures
\[ \nu_i^p\in \mq_1(\fB^p) \mbox{ and } \nu_i^q\in \mq_1(\fB^q) \mbox{ such that }\]
\[ \dist_{\fB^p}(\nu_i^p,\vf_i)<O_i(\fB^p),
 \dist_{\fB^q}(\nu_i^q,\vf_i)<O_i(\fB^q).\]
Then there is $\nu_i$ in $\mq_r(\fB)$ which is a common extension of $\nu_i^p$ and $\nu_i^q$ and such that
$\dist_{\fB}(\nu_i,\vf_i)< O_i(\fB)$; indeed, if $O_i(\fB)<+\infty$, then this follows from  Definition \ref{am:1}(i). In case
 $O_i(\fB)=+\infty$ we may take {\em any} extension granted by \ref{am:5}(ii) and the way we have chosen $\fB^p$ and $\fB^q$, and extend it to $\fB$ preserving its norm.

For $i\le n$ we choose $\nu_i\in \mq_r(\fB)$ applying Definition \ref{am:1} to the pair $\nu_i^p,\nu_i^q$. Note that if $O_i(\fB)=+\infty$ then
we may use the fact that both $\nu_i^p$ and $\nu_i^q$ are represented by the same measure $\widetilde{\nu}_i\in M^S$ so
$\widetilde{\nu}_i|\fB$ is their common extension to $\fB$ with norm $\le r$.

In this way we get  simple extensions
\[ p_1=(\fB^p, n_1, (\nu_i^p)_{i\le n_1},k), \quad q_1=(\fB^q, n_1, (\nu_i^q)_{i\le n_1},k),\]
of $p$ and $q$, respectively. In turn,
\[ s=(\fB, n_1, (\nu_i)_{i\le n_1}, k)\in \bP\]
satisfies  $p_1,q_1\leq s$, and this finishes the proof of Claim B.
 \medskip

\noindent{\sc Claim C.} For every $k_0,n_0\in\omega$  and finite $\fB_0\sub \fA$,
the set
\[\bD(\fB_0,n_0,k_0)=\{p=(\fB^p,n^p, (\nu_i^p)_{i \le n^p}, k^p)\in\bP: \fB^p\supseteq \fB_0, n^p\ge n_0, k^p\ge k_0\},\]
is dense in $\bP$.
\medskip

 Take any $p=(\fB^p,n^p, (\nu_i^p)_{i \le n^p}),k^p) \in\bP$
 and consider a triple $\fB_0$, $n_0, k_0$; we can assume that $k_0\ge k^p$ and $n_0\ge n^p$.

  Find $\fB\in\bB$ containing $\fB^p\cup\fB_0$
and $n_1\ge n_0$ such that $O_m(\fB)<1/k_0$ for $m\ge n_1$.
 Then, arguing as in the proof of Claim B we define appropriate $\nu_i$ and $\nu_i'$ so that
\[p\leq (\fB^p,n_1, (\nu_i)_{i\le n_1}, k_0)\leq (\fB, n_1, (\nu_i')_{i\le n_1}, k_0).\]
Indeed, for $n^p<i\le n_1$ we pick a measure $\nu_i\in \mq_1(\fB^p)$  such that
\[ \dist_{\fB^p}(\nu_i,\vf_i)<o_i(\fB^p)+1/(i+1)\le O_i(\fB^p),\]
and then by part (ii) of Definition \ref{am:1} extend $\nu_i$ to a measure
$\nu_i'\in \mq_1(\fB)$  such that $ \dist_{\fB}(\nu_i',\vf_i)<O_i(\fB)$.
Accordingly, for $i\le n$ we suitably extend every $\nu_i$ to $\nu_i'\in \mq_r(\fB)$.
\medskip

With Claim B and C at hand, we apply Martin's axiom $\MA(\kappa)$ to get a directed set $\bG\sub \bP$ such that
$\bG\cap \bD(\fB_0,n_0,k_0)\neq\emptyset$ for every finite $\fB_0\sub\fA$ and positive integers  $n_0,k_0$. 
Consider conditions
\[p=(\fB^p,n^p, (\nu_i^p)_{i \le n^p}, k^p)\in \bG.\]
Given $i\in\omega$, the family $\bG_i=\{p\in\bG: i\le n^p\}$ defines a family $\{\nu_i^p: p\in\bG_i\}$
 of consistent measures $\nu_i$ of variation $\le r$ and their domains cover all of $\fA$. Therefore   all those measures extend uniquely to a measure
$\mu_i\in M_r(\fA)$.

For any $A\in\fA$ there is  $\fB_0\in\bB$ such that $A\in\fB_0$. Given $\eps>0$, take $k_0$ such that $1/k_0<\eps$ and
\[ p=(\fB,n,(\nu_i)_{i\le n},k) \in\bG\cap \bD(\fB_0,1,k_0).\]
 Then $\mu_n(A)=\nu_n(A)$
so
\[|\mu_n(A)-\vf_n(A)| < O_n(\fB)<1/k\le 1/k_0<\eps.\]
For every $m>n$ there is $p'=(\fB',n',(\nu_i')_{i\le n'}, k')\in\bG$ such that  $p\leq p'$ and $m\le n'$. Then $\mu_m(A)=\nu_m'(A)$
and $|\nu_m(A)-\vf_m(A)|<1/k_0$ by Claim A.
This shows that
\[\mu_n(A)-\vf_n(A)\to 0,\]
and the proof is complete.
\end{proof}

\begin{proposition}\label{am:7}
For any cardinal number $\kappa$, the algebra $\fA=\clop(2^\kappa)$ has $\lep(2)$.

Consequently, if $\MA(\kappa)$ holds for some $\kappa<\con$ then the algebra $\clop(2^\kappa)$
has the approximation property.
\end{proposition}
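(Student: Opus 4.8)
The plan is to exhibit an explicit witnessing family $\bB$ and to reduce the extension requirement in Definition \ref{am:5}(ii) to a finite, linear-algebraic ``coupling'' problem that is solved uniformly. For a finite $F\sub\kappa$ let $\fA_F$ denote the finite subalgebra of $\fA=\clop(2^\kappa)$ consisting of the clopen sets depending only on coordinates in $F$; its atoms are the cylinders $[\sigma]$, $\sigma\in 2^F$. I take $\bB=\{\fA_F:F\in[\kappa]^{<\omega}\}$. Condition \ref{am:5}(i) is immediate: any finite $\fC\sub\fA$ is generated by finitely many clopen sets, each depending on finitely many coordinates, so $\fC\sub\fA_F$ with $F$ the union of their supports. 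For \ref{am:5}(ii), given an uncountable $\bB'\sub\bB$ I pass to the corresponding uncountable family of finite sets and apply the $\Delta$-system (sunflower) lemma to extract an uncountable subfamily with a common root $R$, so that $F_1\cap F_2=R$ for any two distinct members. For such a pair one has $\fA_{F_1}\cap\fA_{F_2}=\fA_R$ and $\la\fA_{F_1}\cup\fA_{F_2}\ra=\fA_{F_1\cup F_2}$, and crucially the blocks $G_1=F_1\sm R$, $G_2=F_2\sm R$ are disjoint, so the atoms of $\fA_{F_1\cup F_2}$ are indexed by triples $(s,g_1,g_2)\in 2^R\times 2^{G_1}\times 2^{G_2}$; that is, $\fA_{F_1}$ and $\fA_{F_2}$ are independent over $\fA_R$.

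Next I would reduce the extension of measures to a coupling problem atom-by-atom over $R$. Identifying a measure on a finite algebra with its vector of atom-values, consistency of $\nu_1\in\mq_1(\fA_{F_1})$ and $\nu_2\in\mq_1(\fA_{F_2})$ on $\fA_R$ means $\sum_{g_1}\nu_1(s,g_1)=\sum_{g_2}\nu_2(s,g_2)=:m_s$ for each $s\in 2^R$. A common extension $\nu$ on $\fA_{F_1\cup F_2}$ is then exactly a choice, for every $s$, of a matrix $\bigl(\nu(s,g_1,g_2)\bigr)$ with row sums $\nu_1(s,\cdot)$ and column sums $\nu_2(s,\cdot)$. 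I would invoke the coupling lemma below: for finite signed vectors $p,q$ with $\sum_b p_b=\sum_c q_c$ there is a rational matrix $w$ with row marginals $p$, column marginals $q$ and $\sum_{b,c}|w_{bc}|=\max(\|p\|,\|q\|)$, where $\|\cdot\|$ is the $\ell_1$-norm. Applying it for each $s$, assembling, and using $\max(\|p\|,\|q\|)\le\|p\|+\|q\|$ gives
\[ \|\nu\|=\sum_{s}\max\Bigl(\sum_{g_1}|\nu_1(s,g_1)|,\ \sum_{g_2}|\nu_2(s,g_2)|\Bigr)\le\|\nu_1\|+\|\nu_2\|\le 2,\]
so $\nu\in\mq_2(\fA_{F_1\cup F_2})$, which is precisely \ref{am:5}(ii) with $r=2$. (When $\kappa$ is countable there is no uncountable $\bB'$ and \ref{am:5}(ii) holds vacuously, so the argument covers all $\kappa$.)

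The hard part is the coupling lemma, i.e.\ controlling the variation of the common extension --- exactly the difficulty flagged in Remark \ref{am:2}. The naive rank-one (product) coupling $w_{bc}=p_bq_c/m$ has variation $\|p\|\,\|q\|/|m|$, which blows up when the common mass $m$ is small and is therefore useless here. Instead I would use the Hahn decompositions $p=p^+-p^-$ and $q=q^+-q^-$: couple $p^+$ with $q^+$ and $p^-$ with $q^-$ as far as the smaller of the respective totals permits (a nonnegative transportation problem, solvable rationally by the northwest-corner rule), and route the unavoidable excess $\bigl|\,\|p^+\|-\|q^+\|\,\bigr|$ through a single auxiliary column (or row); the two excesses balance, so such a column can have zero column-sum. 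A direct computation then shows the total variation equals $\max(\|p\|,\|q\|)$, with all entries rational whenever $p,q$ are. The lower bound $\sum_{b,c}|w_{bc}|\ge\sum_b|\sum_c w_{bc}|=\|p\|$ (and symmetrically $\ge\|q\|$) confirms optimality, though only the upper bound is needed.

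Finally, for the ``Consequently'' clause I would apply Theorem \ref{am:6} to $\fA=\clop(2^\kappa)$: it has $\lep(2)$ by the above, $|\fA|=\kappa$ for infinite $\kappa$, and $\ult(\fA)=2^\kappa$, which is separable whenever $\kappa\le\con$ by the Hewitt--Marczewski--Pondiczery theorem. Since $\MA(\kappa)$ is assumed for some $\kappa<\con$, we have $\kappa\le\con$, all hypotheses of Theorem \ref{am:6} are met, and hence $\clop(2^\kappa)$ has the approximation property.
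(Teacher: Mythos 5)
Your proof is correct and follows essentially the same route as the paper: the same cofinal family of coordinate-determined subalgebras, the same atom-by-atom reduction over the common coordinates to a marginal-coupling problem, the same key transportation lemma (the paper's Lemma \ref{ap:3}, proved there by induction on $m+n$ rather than via your Hahn-decomposition-plus-excess-routing argument), and the same final estimate $\sum_{s}\max\big(|\nu_1|(C_s),|\nu_2|(C_s)\big)\le\|\nu_1\|+\|\nu_2\|\le 2$ before invoking Theorem \ref{am:6}. Your only superfluous step is the $\Delta$-system extraction: since $F_1\sm F_2$ and $F_2\sm F_1$ are disjoint for \emph{any} pair of finite sets, every two distinct members of $\bB$ already satisfy Definition \ref{am:5}(ii), which is exactly what the paper exploits when it checks the condition for arbitrary $\fB_{F_1},\fB_{F_2}$.
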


\begin{proof}
For any finite set $F\sub\kappa$ we let $\fB_F$ be the finite subalgebra of  $\fA$  of all sets that are determined by  coordinates in $F$;
thus $\fB_F$ is generated by its atoms $A$ of the form
\[A=\{t\in 2^{\kappa}: t|F=\tau|F\},\]
for some function $\tau:F\to 2$.
Clearly the family $\bB$ of all  such $\fB_F$ is cofinal in $\fA$ so it is enough to check that any $\fB_{F_1}, \fB_{F_2}$
satisfy (ii) of Definition \ref{am:5}.

Consider  $\nu_i\in \mq_1(\fA_{F_i})$, $i=1,2$ and suppose that  $\nu_1$ and $\nu_2$ agree on
$\fB_{F_1}\cap \fB_{F_2}$ which is $\fB_H$, where $H=F_1\cap F_2$.

Let $A_i$, $i\le 2^{F_1\sm H}$, be the list of all atoms of $\fB_{F_1\sm H}$ and, accordingly,
 $B_j$ be the list of all atoms of $\fB_{F_2\sm H}$.

 For a fixed atom $C$ of $\fA_{H}$ we apply Lemma \ref{ap:3} to $a_i=\nu_1(A_i\cap C)$ and $b_j=\nu_2(B_j\cap C)$.
 Note that
 \[\sum_i a_i=\nu_1(C)=\nu_2(C)=\sum_j b_j.\]
 This enables us to define $\overline{\nu}$ for all  $A\in\fA_F$ contained in $C$, and
 \[|\overline{\nu}|(C)\le  \max\big(|\nu_1|(C), |\nu_2|(C)\big)\le    |\nu_1|(C)+|\nu_2|(C),\]
 so we get $\overline{\nu}$ with  $\| \overline{\nu}\|\le 2$.

The final statement follow directly from Theorem \ref{am:6}.
\end{proof}

\begin{remark}\label{am:7.5}
In the proof of \ref{am:7} one can alternatively check that $\nu_1$ and $\nu_2$ admit a common extension of norm $\le 2$
applying  a result Basile, Rao and Shortt \cite{BRS94} described in Appendix.
One can check that $SC(\nu_1,\nu_2)\le 2$ basing on the following remark: If $B_i\in \fB_{F_i}$ and $B_1\sub B_2$ then
there is $C\in \fB_{F_1\cap F_2}$ such that $B_1\sub C\sub B_2$.
\end{remark}

Recall that a family $\cA$ of subsets of $\omega$ is {\em almost disjoint} if the intersection of any two distinct members of $\cA$ is finite.

\begin{proposition}\label{am:8}
Let $\fA$ be an algebra of subsets of $\omega$ that is generated by an almost disjoint family $\cA$ and all finite subsets of $\omega$.
Then the algebra  $\fA$ has $\lep(3)$.

Consequently, if $|\fA|=\kappa<\con$ and $\MA(\kappa)$ holds  then the algebra $\fA$
has the approximation property.
\end{proposition}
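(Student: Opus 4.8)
The plan is to produce a family $\bB$ of finite subalgebras witnessing $\lep(3)$ and then to invoke Theorem \ref{am:6}. For finite $G\sub\cA$ and finite $F\sub\omega$ let $\fB_{G,F}$ be the finite subalgebra of $\fA$ generated by $G$ together with all singletons $\{n\}$, $n\in F$; I enlarge $F$ so that it contains every pairwise intersection $A\cap A'$ of distinct $A,A'\in G$ (each such intersection is finite by almost disjointness). Let $\bB$ consist of all such $\fB_{G,F}$. Since each element of $\fA$ is a Boolean combination of finitely many members of $\cA$ and a finite set, every finite subalgebra of $\fA$ is contained in some $\fB_{G,F}$, which gives Definition \ref{am:5}(i). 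The reason for putting the pairwise intersections into $F$ is that the infinite atoms of $\fB_{G,F}$ are then exactly the sets $A\sm F$ for $A\in G$ together with the remainder $\omega\sm(\bigcup G\cup F)$, while the remaining atoms are the singletons $\{n\}$, $n\in F$.

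For Definition \ref{am:5}(ii) I would start from an uncountable $\bB'\sub\bB$ and apply the $\Delta$-system lemma to the families $\{G_\fB:\fB\in\bB'\}\sub[\cA]^{<\omega}$ and $\{F_\fB:\fB\in\bB'\}\sub[\omega]^{<\omega}$, passing to an uncountable subfamily on which $\{G_\fB\}$ is a $\Delta$-system with root $R\sub\cA$ and pairwise disjoint petals, $\{F_\fB\}$ is a $\Delta$-system with root $S\sub\omega$, and $|G_\fB|$ is constant. Any two members $\fB_1=\fB_{G_1,F_1}$ and $\fB_2=\fB_{G_2,F_2}$ of this subfamily then satisfy $G_1\cap G_2=R$ and $F_1\cap F_2=S$; writing $P=G_1\sm R$ and $Q=G_2\sm R$, almost disjointness forces every cross intersection $A\cap B$ with $A\in P$, $B\in Q$ to be finite (and the intersections $A\cap C$, $B\cap C$ with $C\in R$ are already absorbed into $F_1$, $F_2$).

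The heart of the matter is to extend a consistent pair $\nu_1\in\mq_1(\fB_1)$, $\nu_2\in\mq_1(\fB_2)$ to some $\nu\in\mq_3(\la\fB_1\cup\fB_2\ra)$. The key structural fact, forced by almost disjointness, is that away from the finite exceptional set $D=(F_1\btu F_2)\cup\bigcup\{A\cap B: A\in P,\ B\in Q\}$ the atoms of $\la\fB_1\cup\fB_2\ra$ are precisely the pure atoms $C\sm(F_1\cup F_2)$ ($C\in R$), $A\sm(\bigcup Q\cup F_1\cup F_2)$ ($A\in P$), $B\sm(\bigcup P\cup F_1\cup F_2)$ ($B\in Q$), and the remainder, each of which lies inside a single atom of $\fB_1$ and of $\fB_2$. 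On these pure atoms the two marginals are reconciled exactly as in Proposition \ref{am:7}, distributing the mass of each big atom by the transportation Lemma \ref{ap:3}; this part contributes total variation at most $2$. It then remains to fix $\nu$ on the finitely many atoms meeting $D$, namely the singletons $\{n\}$ with $n\in F_1\cup F_2$ and the finite cross atoms $(A\cap B)\sm(F_1\cup F_2)$, so as to match $\nu_1$ and $\nu_2$ on $\fB_1\cap\fB_2$; since these sit inside big atoms already accounted for, the correction adds mass controlled by $\|\nu_1\|+\|\nu_2\|$ on the affected atoms. I expect the main obstacle to be precisely this bookkeeping: keeping the variation below $3$ while reconciling the two measures across the finitely many points produced by the cross intersections of the almost disjoint generators. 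This is exactly the content of the Basile--Rao--Shortt criterion of Remark \ref{am:7.5}: the interpolation property used there for $2^\kappa$ now holds only up to the finite defect $D$, and that defect is what raises the constant from $2$ to $3$.

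Finally, once $\lep(3)$ is established the consequence is immediate from Theorem \ref{am:6}: if $|\fA|=\kappa<\con$ and $\MA(\kappa)$ holds, it only remains to observe that $\ult(\fA)$ is separable. Indeed, since $\fA$ contains all singletons, the principal ultrafilters $u_n=\{A\in\fA: n\in A\}$, $n\in\omega$, form a countable family meeting every nonempty basic clopen set, hence a dense subset of $\ult(\fA)$. Thus all hypotheses of Theorem \ref{am:6} are met and $\fA$ has the approximation property.
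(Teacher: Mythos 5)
Your choice of $\bB$, the cofinality argument, the $\Delta$-system extraction, and the closing appeal to Theorem \ref{am:6} (with the principal ultrafilters witnessing separability of $\ult(\fA)$) all run parallel to the paper; incidentally, since $[\omega]^{<\omega}$ is countable you could have arranged $F_1=F_2$ outright instead of a $\Delta$-system on the $F$'s, which is in effect what the paper does by fixing a single $n$ and taking $\fB_1=\la n,A_1,\ldots,A_m,B_1,\ldots,B_k\ra$, $\fB_2=\la n,A_1,\ldots,A_m,C_1,\ldots,C_k\ra$. The problem is the heart of the matter, which you explicitly leave open (``I expect the main obstacle to be precisely this bookkeeping''), and where your proposed route fails structurally. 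In Proposition \ref{am:7} the transportation Lemma \ref{ap:3} is usable because independence makes every cell $A_i\cap B_j$ inside a root atom nonempty, so any plan can be realized by a measure. Here almost disjointness does the opposite: a pure atom $A\sm(\bigcup Q\cup F_1\cup F_2)$ with $A\in P$ is \emph{disjoint} from every petal atom $C\sm F_2$, $C\in Q$ --- it sits entirely inside the remainder atom of $\fB_2$ --- and the cross cells $(A\sm F_1)\cap(C\sm F_2)$ are finite and may well be empty. Hence no mass placed on pure atoms can feed the $\fB_2$-marginal on the petal atoms of $\fB_2$, and ``reconciling the two marginals exactly as in Proposition \ref{am:7}'' on the pure part is impossible; moreover your accounting (at most $2$ for the pure part, plus a correction on $D$ ``controlled by $\|\nu_1\|+\|\nu_2\|$'') would naively give $4$, not $3$, and no actual estimate of $SC(\nu_1,\nu_2)$ is offered either. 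Neither Lemma \ref{ap:3} nor the Basile--Rao--Shortt criterion appears in the paper's proof of this proposition (Remark \ref{am:7.5} concerns the Cantor cube case only).

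The paper's construction is explicit and quite different. Setting $X=n\cup\bigcup_{i\le m}A_i\cup\bigcup_{i,j\le k}B_i\cap C_j$, it represents the two measures as restrictions of discrete measures
\[ \nu_1=\nu_0+\sum_{i\le k}b_i\delta_{x_i}+b\,\delta_x,\qquad \nu_2=\nu_0+\sum_{i\le k}c_i\delta_{y_i}+c\,\delta_x,\]
with a common part $\nu_0$ carried by $X$ and with $x_i\in B_i\sm X$, $y_i\in C_i\sm X$, and then simply takes $\nu=\nu_0+\sum_i b_i\delta_{x_i}+\sum_i c_i\delta_{y_i}+(b-\overline{c})\,\delta_x$ where $\overline{c}=\sum_i c_i$. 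Since $B_i\cap C_j\sub X$, each $x_i$ avoids every $C_j$ and each $y_j$ avoids every $B_i$, so $\nu$ restricts correctly to both algebras, the single correction $(b-\overline{c})\delta_x$ balancing the totals; and $\|\nu\|\le\big(\|\nu_0\|+\sum_i|b_i|+|b|\big)+\sum_i|c_i|+|\overline{c}|\le 1+1+1=3$. In your language: the only feasible ``plan'' routes all of $\nu_1$'s petal mass into the remainder atom of $\fB_2$ and vice versa, and it is this rigidity --- not bookkeeping on the finite defect set $D$ --- that forces the constant $3$ in place of the $2$ of Proposition \ref{am:7}. Your proposal correctly identifies the combinatorial setup and the reduction, but the decisive norm-$3$ extension step is missing and the mechanism you suggest for it does not work.
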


\begin{proof}
We consider the family $\bB$ of finite subalgebras of $\fA$, where every $\fB=\la n, A_1,\ldots, A_k\ra\in\bB$  is
an algebra generated  by all subsets of $n=\{0,1,\ldots, n-1\}$ and $A_i\in\cA$ having the property that
$A_i\cap A_j\sub n$ for $i\neq j$. Clearly $\bB$ is cofinal in $\fA$; we shall check that  (ii) of Definition \ref{am:5} holds for $r=3$.

If $\bB'\sub\bB$ is uncountable then there are two algebras in $\bB'$ of the form
\[\fB_1=\la n,A_1,\ldots, A_m, B_1,\ldots, B_k\ra, \quad \fB_2=\la n,A_1,\ldots, A_m, C_1,\ldots, C_k\ra,\]
where $B_i, C_j\in \cA$ are all distinct. Set
\[ X=n\cup\bigcup_{i\le m} A_i \cup \bigcup_{i,j\le k} B_i\cap C_j;\]
note that  $B_i\sm X$ and $C_i\sm X$ are infinite for $i\le k$.

Take $\nu_1\in \mq_1(\fB_1)$ and $\nu_2\in \mq_1(\fB_2)$ which agree on $\fB_1\cap\fB_2=\la n,A_1,\ldots, A_m\ra$. We can represent them as the restrictions of
\[ \nu_1=\nu_0+\sum_{i\le k} b_i\delta_{x_i}+b \delta_x,\quad \nu_2=\nu_0+\sum_{i\le k} c_i\delta_{y_i}+c \delta_x, \mbox{ where}\]
\[ x_i\in B_i\sm X,  y_i\in C_i\sm X,  x \in \omega\sm \big(X\cup\bigcup_{i\le k} (B_i\cup C_i)\big).\]
Here $\nu_0$ is defined as $\nu_0(A)=\nu_1(A\cap X)=\nu_2(A\cap X)$.
Write $\overline{b}=\sum_{i\le k} b_i$ and $\overline{c}=\sum_{i\le k} c_i$, and  consider the measure
\[\nu=\nu_0+\sum_{i\le k} b_i\delta_{x_i}+ \sum_{i\le k} c_i\delta_{y_i}+ (b-\overline{c})\delta_x.\]
Then we have
\[ \nu(\omega)=\nu_0(X)+\overline{b}+\overline{c} +b - \overline{c}=\nu_1(\omega)=\nu_2(\omega).\]
Moreover, $\nu(B_i)=\nu_0(B_i\cap n)+b_i=\nu_1(B_i)$, a similar argument holds for $\nu_2$ and $C_i$. It follows that
$\nu$ is a common extension of $\nu_1,\nu_2$. Clearly, $\|\nu\|\le 3$, so this finishes the proof of the first assertion;
the second one follows again from Theorem \ref{am:6}.
\end{proof}

In connection with results given in the next section it would be desirable to answer the following question.

\begin{problem}
Is there a ZFC example of an uncountable algebra with the approximation  property?
\end{problem}

This problem may have a negative answer; one might conjecture, for instance, that no algebra of cardinality $\ge \con$ has the  approximation property.

\section{Trivial twisted sums of $c_0$ and $C(K)$}\label{tts}

We conclude our considerations from previous section and show here, in particular,  that under Martin's axiom and the negation of the continuum hypothesis every twisted sum of $c_0$ and $C(2^{\omega_1})$ is trivial.
Another example of this kind is a space defined from an almost disjoint family of small size.

\begin{theorem}\label{tts:1}
Let $K$ be a zerodimensional compact space  such that $\fA=\clop(K)$ has the approximation property.
Then  $K$ has the $^*$extension property.
\end{theorem}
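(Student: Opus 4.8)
The plan is to read off, from the isolated points of a given countable discrete extension $L'$ of $M_1(K)$, a sequence of set functions on $\fA=\clop(K)$ to which the approximation property applies, and then feed the resulting measures into the criterion of Lemma~\ref{ts:0.3}. Fix $L'\in\cde(M_1(K))$ and write $\omega=L'\sm M_1(K)$; since $K$ is zerodimensional and $\fA=\clop(K)$ we have $\ult(\fA)=K$, so $M(\fA)$ is identified with $M(K)$ and $M_1(\fA)$ with $M_1(K)$. By Lemma~\ref{ts:0.3} it suffices to produce a bounded sequence $(\nu_n)$ in $M(K)$ such that $\nu_n(g)-\widehat{g}(n)\to 0$ for every $g\in C(K)$ and every continuous extension $\widehat{g}\in C(L')$ of $g$, the latter being viewed as the function $\nu\mapsto\nu(g)$ on $M_1(K)$.

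First I would set up the functions $\vf_n$. For a clopen set $a\in\fA$ the map $m_a\colon M_1(K)\to[-1,1]$, $m_a(\nu)=\nu(a)=\nu(\chi_a)$, is weak$^*$ continuous because $\chi_a\in C(K)$. As $M_1(K)$ is closed in the compact Hausdorff (hence normal) space $L'$, Tietze's extension theorem lets me extend $m_a$ to $\widehat{m_a}\in C(L')$ with values still in $[-1,1]$. Setting $\vf_n(a)=\widehat{m_a}(n)$ defines functions $\vf_n\colon\fA\to[-1,1]$.

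Next I would verify the hypothesis of the approximation property (Definition~\ref{am:-1}). Fix a finite algebra $\fB\sub\fA$ and $\eps>0$. Applying Lemma~\ref{c:3} to the finite family $\{\widehat{m_B}:B\in\fB\}\sub C(L')$ yields $n_0$ such that for each $n\ge n_0$ there is $\mu\in M_1(K)$ with $|\widehat{m_B}(\mu)-\widehat{m_B}(n)|<\eps$ for all $B\in\fB$. Since $\widehat{m_B}(\mu)=m_B(\mu)=\mu(B)$ and $\widehat{m_B}(n)=\vf_n(B)$, this reads $\dist_\fB(\mu,\vf_n)<\eps$, so $\inf_{\mu\in M_1(\fA)}\dist_\fB(\mu,\vf_n)\to 0$. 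The approximation property of $\fA$ then provides a bounded sequence $(\nu_n)$ in $M(\fA)=M(K)$ with $\widehat{m_a}(n)-\nu_n(a)\to 0$ for every clopen $a$.

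It remains to check the condition of Lemma~\ref{ts:0.3}(ii) for this $(\nu_n)$. For $g=\chi_a$ it is precisely $\nu_n(a)-\widehat{m_a}(n)\to 0$, and by linearity it holds for every simple function $s=\sum_j c_j\chi_{a_j}$, taking $\widehat{s}=\sum_j c_j\widehat{m_{a_j}}$. For general $g\in C(K)$ I would use that, $K$ being zerodimensional, simple functions are sup-norm dense in $C(K)$: given $\eps>0$ pick a simple $s$ with $\|g-s\|_\infty<\eps$ and split
\[ \nu_n(g)-\widehat{g}(n)=\nu_n(g-s)+\big(\nu_n(s)-\widehat{s}(n)\big)+\big(\widehat{s}(n)-\widehat{g}(n)\big). \]
The first term is at most $\eps\sup_n\|\nu_n\|$ and the middle term tends to $0$. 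The main obstacle is the last term: $\widehat{s}$ and $\widehat{g}$ are controlled only on $M_1(K)$, where $|\widehat{g}(\nu)-\widehat{s}(\nu)|=|\nu(g-s)|\le\|g-s\|_\infty\le\eps$. To transfer this bound to the isolated points I would argue as in Lemma~\ref{ts:0.25}: if $|\widehat{g}(n)-\widehat{s}(n)|\ge\eps+\delta$ held for infinitely many $n$, an accumulation point of these $n$ in $M_1(K)$ would violate the bound on $M_1(K)$; hence $\limsup_n|\widehat{g}(n)-\widehat{s}(n)|\le\eps$. Combining the three estimates gives $\limsup_n|\nu_n(g)-\widehat{g}(n)|\le(1+\sup_n\|\nu_n\|)\eps$, and letting $\eps\to 0$ completes the verification. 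Since $L'\in\cde(M_1(K))$ was arbitrary, Lemma~\ref{ts:0.3} together with Definition~\ref{ts:0.1} yields the $^*$extension property of $K$.
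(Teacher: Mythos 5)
Your proposal is correct and follows essentially the same route as the paper: extend the evaluation maps $\nu\mapsto\nu(A)$ to $\theta_A\in C(L')$, set $\vf_n(A)=\theta_A(n)$, verify the hypothesis of the approximation property via Lemma~\ref{c:3}, and pass from characteristic functions through simple functions to all of $C(K)$ by uniform density, handling arbitrary extensions $\widehat{g}$ with the Lemma~\ref{ts:0.25}-style accumulation-point argument. The only difference is cosmetic: the paper phrases the final step as closure of a family $\GG$ under linear combinations and uniform limits, whereas you carry out the equivalent three-term $\eps$-estimate explicitly.
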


\begin{proof}
Set  $L=M_1(K)$; fix $L'\in\cde(L)$ and identify $L'\sm L$ with $\omega$.

For every $A\in\fA$, the function $M_1(K)\ni\nu\to \nu(A)$ is continuous on $M_1(K)$. Denote by $\theta_A$ some its extension
to a continuous function  $L'\to [-1,1]$. Define  set functions $\vf_n$ on $\fA$ as $\vf_n(A)=\theta_A(n)$ for every $n$ and $A\in\fA$.

Recall that $o_n(\fB)$ is defined in \ref{am:0}. We have
$ \lim_n o_n(\fB)=0$,
for every finite subalgebra $\fB$ of $\fA$ by Lemma \ref{c:3} applied to the finite family $\{\theta_B: B\in\fB\}$.
Now the approximation property guarantees   that there are $r>0$ and  a sequence $\mu_n\in M_r(\fA)$ such that
\[\lim_n (\theta_A(n)-\mu_n(A))=\lim_n\big(\vf_n(A) - \mu_n(A)\big)=0,\]
for every $A\in\fA$. Every  measure $\mu_n$ extends uniquely to a Radon measure on $K$; we denote its extension
by the same symbol.

Consider the family $\GG$ of those $g\in C(K)$ such that whenever $\widehat{g}\in C(L')$ extends $g$ as a function on
$M_1(K)$ then $\mu_n(g)-\widehat{g}(n)\to 0$. As we have seen, for every $A\in\fA$, $\chi_A\in \GG$, see Lemma \ref{ts:0.25}. Using the same lemma we can easily verify that
$\GG$ is closed under finite linear combinations, hence every simple continuous function is in $\GG$.

Note that if $g,h\in C(K)$ and $\|g-h\|<\eps$ for some $\eps>0$ then, taking any extensions $\widehat{g}, \widehat{h}$ of $g$ and $h$, respectively,
we have $|\widehat{g}(n)-\widehat{h}(n)|<\eps$ for almost all $n\in\omega$. This remark implies that the family $\GG$ is closed under
 uniform limits and we hence $\GG=C(K)$.
Consequently, by Lemma \ref{ts:0.3},  $K$ has the $^*$extension property, and this finishes the proof.
\end{proof}

Now Propositions \ref{am:7} and \ref{am:8}, together with Theorem \ref{tts:1} yield the following.

\begin{corollary}\label{tts:2}
Assume that $\kappa<\con$ is such a cardinal number that $\MA(\kappa)$ holds. Then $2^{\kappa}$ has the $^*$extension property and hence, by Theorem \ref{ts:2}, every twisted sum of $c_0$ and $C(2^{\kappa})$ is trivial.
\end{corollary}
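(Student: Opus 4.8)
The final statement, Corollary \ref{tts:2}, is essentially a bookkeeping assembly of three earlier results, so my plan is to verify that the hypotheses of each chain together cleanly. The claim is that under $\MA(\kappa)$ with $\kappa<\con$, the space $2^\kappa$ has the $^*$extension property, and hence every twisted sum of $c_0$ and $C(2^\kappa)$ is trivial. The second half of this conjunction is immediate from Theorem \ref{ts:2}, which states precisely that the $^*$extension property implies triviality of all twisted sums. So the entire content of the proof reduces to establishing that $2^\kappa$ has the $^*$extension property.

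To get the $^*$extension property, I would invoke Theorem \ref{tts:1}, which says that a zerodimensional compact space $K$ whose clopen algebra $\fA=\clop(K)$ has the approximation property must have the $^*$extension property. Thus it suffices to check two things: first, that $2^\kappa$ is zerodimensional and compact (which it is, being a Cantor cube, a product of finite discrete spaces), and second, that $\clop(2^\kappa)$ has the approximation property. For the latter I would appeal directly to Proposition \ref{am:7}: it asserts that $\clop(2^\kappa)$ always has $\lep(2)$, and that under $\MA(\kappa)$ for $\kappa<\con$ this algebra has the approximation property. The hypothesis of the present corollary is exactly $\MA(\kappa)$ with $\kappa<\con$, so Proposition \ref{am:7} applies verbatim.

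Assembling the chain, the proof runs as follows. Since $\MA(\kappa)$ holds and $\kappa<\con$, Proposition \ref{am:7} gives that $\clop(2^\kappa)$ has the approximation property. As $2^\kappa$ is a zerodimensional compact space, Theorem \ref{tts:1} then yields that $2^\kappa$ has the $^*$extension property. Finally, Theorem \ref{ts:2} converts the $^*$extension property into the conclusion that every twisted sum of $c_0$ and $C(2^\kappa)$ is trivial.

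I do not anticipate a genuine obstacle here; the only point requiring a moment's care is tracking the cardinality condition $|\fA|=\kappa$ implicit in applying Proposition \ref{am:7}, since Theorem \ref{am:6} (on which \ref{am:7} rests) requires $\MA(\kappa)$ where $\kappa$ is the cardinality of the algebra. For $2^\kappa$ one has $|\clop(2^\kappa)|=\kappa$ when $\kappa$ is infinite, matching the index in $\MA(\kappa)$, so the quantifier over dense sets in Martin's axiom is applied to exactly the right number of dense sets. Beyond confirming this alignment, the corollary is a direct concatenation of the three cited results and needs no further argument.
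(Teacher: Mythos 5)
Your proposal is correct and follows essentially the same route as the paper, which derives Corollary \ref{tts:2} by exactly this concatenation of Proposition \ref{am:7}, Theorem \ref{tts:1}, and Theorem \ref{ts:2}. Your observation that $|\clop(2^\kappa)|=\kappa$ for infinite $\kappa$, so that the cardinal in $\MA(\kappa)$ matches the hypothesis of Theorem \ref{am:6} underlying Proposition \ref{am:7}, is a correct check of a point the paper leaves implicit.
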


\begin{corollary}\label{tts:3}
Assume that $\kappa<\con$ and $\MA(\kappa)$ holds.
Let $\cA$ be an almost disjoint family of subsets of $\omega$ with $|\cA|=\kappa$. Let $K=\ult(\fA)$ where $\fA$ is an algebra
of subsets of $\omega$ generated by $\cA$ and all finite sets.

Then $K$ has the $^*$extension property and hence, by Theorem \ref{ts:2}, every twisted sum of $c_0$ and $C(K)$ is trivial.
\end{corollary}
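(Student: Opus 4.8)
The plan is to recognize the statement as a corollary that assembles three results already established: Proposition \ref{am:8}, Theorem \ref{tts:1}, and Theorem \ref{ts:2}. The hypotheses are designed precisely to feed the machinery already built, so no genuinely new argument is needed at this level --- only the correct identifications so that each cited result applies.

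First I would set up the Stone-duality dictionary. Since $K=\ult(\fA)$ is the Stone space of $\fA$, it is automatically compact, Hausdorff and zerodimensional, and by the duality recalled in Section \ref{pr} its algebra of clopen sets satisfies $\clop(K)\cong\fA$. Thus $\fA$ may be regarded as $\clop(K)$, which is exactly the input required by Theorem \ref{tts:1}.

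Next I would check the cardinality hypothesis. The algebra $\fA$ is generated by the almost disjoint family $\cA$ (of size $\kappa$) together with the countably many finite subsets of $\omega$; since every element of $\fA$ is a finite Boolean combination of these generators, $|\fA|=\kappa$ whenever $\kappa$ is infinite. (If $\kappa$ were finite then $\fA$ would merely be countable and would already have the approximation property by the remark following Definition \ref{am:-1}, so the interesting case is $\kappa\ge\omega_1$.) Hence the hypothesis $|\fA|=\kappa<\con$ of Proposition \ref{am:8} is met.

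With these identifications the chain closes. By Proposition \ref{am:8} the algebra $\fA$ --- being generated by an almost disjoint family and the finite sets --- has $\lep(3)$, and since $|\fA|=\kappa<\con$ and $\MA(\kappa)$ holds, $\fA$ has the approximation property. Theorem \ref{tts:1}, applied to the zerodimensional compactum $K$ with $\clop(K)=\fA$, then yields that $K$ has the $^*$extension property. Finally Theorem \ref{ts:2} converts the $^*$extension property into the conclusion that every twisted sum of $c_0$ and $C(K)$ is trivial. The only place where real work happens lies inside Proposition \ref{am:8}, whose $\lep(3)$ rests on the combinatorics of the almost disjoint family and whose approximation-property consequence invokes Martin's axiom through Theorem \ref{am:6}; at the level of the corollary the sole thing to be careful about is hypothesis-matching --- confirming $\clop(\ult(\fA))\cong\fA$ and $|\fA|=\kappa$ --- so that every cited result genuinely applies.
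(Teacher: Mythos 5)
Your proposal is correct and matches the paper exactly: the authors derive Corollary \ref{tts:3} by precisely this chain, noting that Proposition \ref{am:8} (whose hypotheses include, implicitly via Theorem \ref{am:6}, the separability of $\ult(\fA)$, automatic here since $\omega$ is dense in the Stone space) gives the approximation property, Theorem \ref{tts:1} upgrades it to the $^*$extension property, and Theorem \ref{ts:2} finishes. Your added checks --- $\clop(\ult(\fA))\cong\fA$ and $|\fA|=\kappa$ --- are the right hypothesis-matching details, which the paper leaves tacit.
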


The above results seem to give the first (consistent) examples of a nonmetrizable compact space $K$ for which every twisted sum of $c_0$ and $C(K)$ is trivial.
 Correa and Tausk \cite{CT16} proved, in particular,  that $C(2^\con)$ admits a nontrivial twisted sum with $c_0$. Hence the question
 about nontrivial twisted sums  of $c_0$ and $C(2^{\omega_1})$ cannot be decided within the usual set theory. This is also the case for compact spaces $K$ as in Corollary \ref{tts:3}, since Castillo proved that assuming $\CH$, for such spaces $K$, there exists a nontrivial twisted sum of $c_0$ and $C(K)$, see Theorem \ref{Castillo}.

The problem arises, if we can apply the above argument to any separable compactum of weight $<\con$.
In other words, we do not know if every small Boolean algebra having a separable Stone space has, consistently, the approximation property.

\begin{problem}\label{tts:4}
 Is there a ZFC example of a  separable compact space $K$ of weight $\omega_1$ such that $c_0$ and $C(K)$ have a nontrivial twisted sum?
 \end{problem}

Let us note  that if we could, while examining the $^*$extension property of a compactum $K$,  exchange $M_1(K)$ for $P(K)$,
the space of probability measures on $K$, then the way to Corollary \ref{tts:2}  would be much shorter, at least
for $\kappa=\omega_1$.
Indeed, $P(2^{\omega_1})$ is homeomorphic to $[0,1]^{\omega_1}$ and therefore $P(2^{\omega_1})$ is an absolute retract,
In particular, for every $L'\in\cde( P(2^{\omega_1}))$ there is a retraction $L'\to P(2^{\omega_1})$ so there is
a norm-one extension operator $C(P(2^{\omega_1}))\to C(L')$.
However, we prove in the next section that $M_1(2^{\omega_1})$ is not an absolute retract.
Note that $M_1(2^{\omega_1})$ is clearly a dyadic space but this fact itself does not help as the examples given in Section \ref{dyadic} indicate.

\section{On properties of $M_1(K)$}\label{mone}

Recall that a compact space $K$ is a \emph{Dugundji space} if for every compact space $L$ containing $K$ there exists a \emph{regular} extension operator $E: C(K)\to C(L)$, i.e.\ an extension operator of  norm $1$ preserving constant functions. It is well-known that a convex compact space $K$ is a Dugundji space if and only if it is an absolute retract, cf. \cite[Sec. 2]{Ha74}. For a nonmetrizable compact space $K$, the space $P(K)$ can be an absolute retract, namely Ditor and Haydon \cite{DH76} proved that $P(K)$ has this property if and only if $K$ is a  Dugundji space of weight at most $\omega_1$. We will show that this can never happen for the space $M_1(K)$.

\begin{theorem}\label{not_AR}
If $K$ is a nonmetrizable compact space, then the space $M_1(K)$ is not a Dugundji space, in particular, it is not an absolute retract.
\end{theorem}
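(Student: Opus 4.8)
The plan is to prove the stronger nonexistence statement from which both assertions follow at once: I would exhibit a single compact superspace $L\supseteq M_1(K)$ that admits \emph{no} regular (norm-one, constant-preserving) extension operator $E\colon C(M_1(K))\to C(L)$. This suffices, because if $M_1(K)$ were an absolute retract then any retraction $r\colon L\to M_1(K)$ would supply the regular extension operator $Eg=g\circ r$ for every superspace, so nonexistence of one such operator rules out $M_1(K)$ being an absolute retract; and it rules out $M_1(K)$ being a Dugundji space directly, by the definition of that class. To bring the machinery of Section \ref{pr} to bear, I would search for $L$ among the countable discrete extensions $L'\in\cde(M_1(K))$: by (the norm-one, probabilistic form of) Lemma \ref{p:2}, producing a regular extension operator is then equivalent to finding a sequence $\mu_n\in P(M_1(K))$ of \emph{probability} measures with $\mu_n-\delta_n\to 0$ in the weak$^*$ topology of $M(L')$. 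The whole problem thus becomes one of attaching the isolated points $\omega$ to $M_1(K)$ so that no interpolating sequence $(\mu_n)$ of probability measures can exist.

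The structural feature I would exploit is the point $0$, the unique fixed point of the antipodal homeomorphism $\nu\mapsto-\nu$, together with the fact that the total variation norm is only weak$^*$ lower semicontinuous and not continuous. When $K$ is nonmetrizable the weight of $K$, hence the character of $M_1(K)$ at $0$, is uncountable, and $0$ is a weak$^*$ limit of norm-one measures of the form $\tfrac12(\delta_x-\delta_y)$ as $x,y$ range over an uncountable family of pairs governed by a fixed structure witnessing nonmetrizability (for $K=2^{\omega_1}$, pairs differing in a single coordinate). The idea is to route the isolated points of $L'$ along such a family, so that the added points carry unit-norm oscillation while accumulating on a weak$^*$-large (non-$G_\delta$) subset of $M_1(K)$ built from that family. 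A regular extension operator would have to match, for every $g\in C(K)\sub C(M_1(K))$ and indeed for every continuous function on the ball, the values prescribed by the isolated points using barycenters $\mu_n$ of probability measures supported on $M_1(K)$; the uncountable combinatorial pattern encoded at the accumulation set cannot be reproduced by any single weak$^*$-convergent sequence of probability measures. This is exactly the phenomenon absent from $P(K)$, where $\|\cdot\|\equiv 1$ and the point $0$ never occurs, and which is what permits $P(2^{\omega_1})=[0,1]^{\omega_1}$ to be an absolute retract while $M_1$ cannot.

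The main obstacle is converting this heuristic into a proof that excludes \emph{every} interpolating sequence $(\mu_n)$, not merely the evident candidates. The difficulty is genuine: although $M_1(K)=\varprojlim_\alpha M_1(K_\alpha)$ over the metrizable quotients $K_\alpha$ of $K$, just as $P(K)=\varprojlim_\alpha P(K_\alpha)$, the averaging data that assemble a retraction across the spectrum in the Ditor--Haydon analysis of $P(K)$ \cite{DH76} fail to assemble for $M_1$, precisely because passing to finer quotients can only increase total variation while the norm is not weak$^*$ continuous. I expect the core of the argument to be a transfinite or combinatorial selection which, from the nonmetrizability of $K$, produces test functions on $M_1(K)$ and a placement of the isolated points forcing any admissible $\mu_n$ to put mass escaping toward the boundary $\{\|\nu\|=1\}$ in a manner incompatible with weak$^*$ convergence to $\delta_n$. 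An alternative route is to compare directly against the Ditor--Haydon criterion, checking that the natural averaging operators associated with the bonding maps lose, for $M_1$ in place of $P$, the regularity that criterion demands, and to deduce the failure of the Dugundji property from that loss. On either route the crux is the same: controlling the variation norm under the hypothetical extension operator.
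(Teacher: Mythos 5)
Your heuristic correctly isolates the mechanism that makes $M_1(K)$ different from $P(K)$ --- the point $0$, the signed differences $\tfrac12(\delta_x-\delta_y)$ converging weak$^*$ to $0$, and the failure of weak$^*$ continuity of the variation norm; this is exactly what drives the paper's key Proposition \ref{not_open}. But the proposal stops precisely where the theorem begins: you never construct the countable discrete extension, and you never prove that no interpolating sequence $(\mu_n)$ in $P(M_1(K))$ with $\mu_n-\delta_n\to 0$ exists --- you explicitly defer this to an ``expected'' transfinite selection. That missing step is the entire content of the statement, and there are concrete reasons to doubt it can be supplied along your route in ZFC for \emph{every} nonmetrizable $K$. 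Note first that the placement of the isolated points is delicate: if they accumulate at a single $\nu_0\in M_1(K)$, then $\mu_n=\delta_{\nu_0}$ already yields a regular extension operator, so you must force accumulation on a large set and then exclude \emph{all} sequences of barycenters, not just the evident ones. The paper's own experience in Sections \ref{dyadic}--\ref{sc} is that killing all extension operators for a countable discrete extension in ZFC is hard --- in every instance either only operators of small norm are excluded (Example \ref{d:3}, Proposition \ref{AU:1}) or extra set-theoretic hypotheses are needed (Theorem \ref{I_B}, Theorem \ref{auth}); and under $\MA(\omega_1)$ the space $2^{\omega_1}$ even has the $^*$extension property (Corollary \ref{tts:2}), which at least signals that discrete extensions of $M_1(2^{\omega_1})$ are partially tame. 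A proof of Theorem \ref{not_AR} must work in ZFC for all nonmetrizable $K$, so a strategy whose only known implementations are axiom-sensitive cannot be left at the level of a plan.

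The paper's actual proof avoids discrete extensions altogether and is spectral-theoretic, which is where your briefly sketched ``alternative route'' would have to go, but with precise tools you do not name. The checkable statement replacing your ``averaging data fail to assemble'' is Proposition \ref{not_open}: for any non-injective surjection $\varphi:L\to K$ the induced map $\varphi^*:M_1(L)\to M_1(K)$ is \emph{not open} --- witnessed by $\mu=\tfrac12\delta_{z_1}-\tfrac12\delta_{z_0}$ in an open $V$ with $\varphi^*(\mu)=0$, while measures $\nu=\tfrac12(\delta_y-\delta_{y'})$ arbitrarily close to $0$ avoid $\varphi^*(V)$. For weight $\omega_1$ this is played against Haydon's characterization of Dugundji spaces as limits of continuous $\omega_1$-spectra of metrizable spaces with \emph{open} bonding maps: representing $M_1(K)$ as the limit of $\langle M_1(K_\alpha),(q^\beta_\alpha)^*,\omega_1\rangle$ with non-injective $q^\beta_\alpha$, Shchepin's spectral theorem forces isomorphic cofinal subsequences of the two spectra, contradicting open versus non-open (Proposition \ref{not_AR_om1}). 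Finally, the general nonmetrizable case is reduced to weight $\omega_1$ by the $r$-admissible-set machinery of Shchepin and Kubi\'s, producing a retract of $M_1(K)$ homeomorphic to $M_1(L)$ for a continuous image $L$ of $K$ of weight $\omega_1$. Without non-openness of the bonding maps, without Haydon's theorem and the spectral theorem (or equivalents), and without the reduction of weight, your outline has no path from the correct local observation about $0$ and the norm to the global failure of the Dugundji property.
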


We will prove this theorem using spectral theorem of Shchepin, the key ingredient will be Proposition \ref{not_open} below.

For a surjection $\varphi: L\to K$ between compact spaces $K,L$, $\varphi^*: M_1(L)\to M_1(K)$ denotes the canonical surjection associated with $\varphi$, i.e., the surjection given by the operator conjugate to the isometrical embedding of $C(K)$ into $C(L)$ induced by $\varphi$.
In other words, for $\mu\in M_1(L)$, $\vf^*(\mu)\in M_1(K)$ is defined by $\vf^*(\mu)(B)=\mu(f^{-1}[B])$ for Borel sets $B\sub K$.

\begin{proposition}\label{not_open}
Let $\varphi: L\to K$ be a surjection of a compact space $L$ onto an infinite space $K$. If $\varphi$ is not injective, then the map $\varphi^*: M_1(L)\to M_1(K)$ is not open.
\end{proposition}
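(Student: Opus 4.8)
The statement to prove is that $\varphi^*$ is not open, so it suffices to produce one point $\mu_0\in M_1(L)$, one open neighbourhood $U\ni\mu_0$, and a net $\nu_\alpha\to\varphi^*(\mu_0)$ in $M_1(K)$ with $\nu_\alpha\notin\varphi^*[U]$; this shows $\varphi^*(\mu_0)\notin\mathrm{int}\,\varphi^*[U]$. Since $\varphi$ is not injective, fix $a\neq b$ with $\varphi(a)=\varphi(b)=x_0$, and take the \emph{cancelling measure}
\[
\mu_0=\tfrac12\delta_a-\tfrac12\delta_b,\qquad\text{so that}\qquad \varphi^*(\mu_0)=0 .
\]
Choose $f\in C(L)$ with $f(a)=1$, $f(b)=-1$, $|f|\le1$, and set $U=\{\mu\in M_1(L):\mu(f)>1/2\}$; this is weak$^*$-open and contains $\mu_0$ because $\mu_0(f)=1$. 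The target net will be a sequence of norm-one measures that escape weak$^*$ to $0$ while keeping their mass far from $x_0$.

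The technical heart is a \emph{confinement estimate}. First I would observe that $f$ can be taken with small support: since the closed neighbourhoods of $\{a,b\}$ form a decreasing family whose intersection is $\{a,b\}$ and $\varphi$ is continuous, the compact sets $\varphi(\overline G)$ decrease to $\{x_0\}$, so for every open $W\ni x_0$ there is an open $G\supseteq\{a,b\}$ with $\varphi(\overline G)\subseteq W$; fixing such $W$, I choose $f$ supported in $G\subseteq\varphi^{-1}[W]$. Then for any $\mu\in U$ one has $1/2<\mu(f)\le|\mu|(\varphi^{-1}[W])$, whence $|\mu|(L\setminus\varphi^{-1}[W])<1/2$, and pushing forward,
\[
|\varphi^*\mu|(K\setminus W)\le|\mu|\bigl(\varphi^{-1}[K\setminus W]\bigr)<\tfrac12 .
\]
Thus $\varphi^*[U]\subseteq\{\nu\in M_1(K):|\nu|(K\setminus W)<1/2\}$. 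It now remains to feed this a net of norm-one measures $\nu_\alpha$, supported in $K\setminus W$, with $\nu_\alpha\to0$ weak$^*$: taking distinct points $p_\alpha,q_\alpha\to y^*$ for some accumulation point $y^*\in K\setminus\overline W$ and putting $\nu_\alpha=\tfrac12\delta_{p_\alpha}-\tfrac12\delta_{q_\alpha}$, one gets $\|\nu_\alpha\|=1$, $|\nu_\alpha|(K\setminus W)=1$, yet $\nu_\alpha(g)=\tfrac12\bigl(g(p_\alpha)-g(q_\alpha)\bigr)\to0$ for every $g\in C(K)$. Hence $\nu_\alpha\notin\varphi^*[U]$ and $\nu_\alpha\to 0=\varphi^*(\mu_0)$, so $\varphi^*$ is not open.

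The main obstacle is exactly the existence of such an accumulation point \emph{separated from} $x_0$, i.e.\ an open $W\ni x_0$ with $K\setminus\overline W$ infinite. This holds automatically when $x_0$ is isolated (take $W=\{x_0\}$; $K\setminus\{x_0\}$ is infinite and, being compact and not discrete, has an accumulation point) and, more generally, whenever some neighbourhood of $x_0$ has infinite complement. The only residual case is the degenerate one where every neighbourhood of $x_0$ is cofinite, i.e.\ $K$ is the one-point compactification of the discrete space $K\setminus\{x_0\}$ with the fold lying over the point at infinity. I expect this to be the delicate step and would handle it by recasting non-openness through the net-characterisation: $\varphi^*$ is open iff the set-valued inverse $\nu\mapsto(\varphi^*)^{-1}(\nu)$ is lower semicontinuous, and at the extreme point $\delta_{x_0}$ its fibre is $(\varphi^*)^{-1}(\delta_{x_0})=P(\varphi^{-1}[x_0])$, a simplex of dimension $\ge1$; I would design $f$ so that the mass escaping along $d_n\to x_0$ is forced onto the part of the fibres where $f\le0$ (using a point of $\varphi^{-1}[x_0]$ off the accumulation set of the clopen pieces $\varphi^{-1}[d_n]$), reproducing the confinement estimate and again obstructing the approach to the non-Dirac barycentre $\tfrac12\delta_a+\tfrac12\delta_b$. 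Checking that $f$ can indeed be arranged to vanish positively on infinitely many such fibres is the crux of this last case.
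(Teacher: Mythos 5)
Your first case is sound, and it is essentially the paper's own Case~1: a cancelling measure $\frac12\delta_a-\frac12\delta_b$ over the fold, a bump function localized over a small neighbourhood of the fold, and test measures $\frac12\delta_{p_\alpha}-\frac12\delta_{q_\alpha}$ concentrated near an accumulation point separated from $x_0$. Your quantitative confinement estimate $|\varphi^*\mu|(K\setminus W)<\tfrac12$ for $\mu\in U$ is a correct version of what the paper verifies directly, and your reduction is fine: if some neighbourhood of $x_0$ has infinite complement, then an accumulation point $y^*$ of $K$ exists in that complement and can be separated from $x_0$ by disjoint open sets; moreover, if a nontrivial fibre sits over an isolated point, one can simply re-choose $a,b$ there and land back in this case.

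The genuine gap is the residual case — $K$ the one-point compactification of an infinite discrete set $D$, with the only nontrivial fibre over the point at infinity $x_0$ — which you leave as an admitted sketch, and which is exactly where the paper's Case~2 does its work. The existence step you flag as ``the crux'' (a point of $\varphi^{-1}(x_0)$ avoiding the closure of infinitely many fibres $\varphi^{-1}(d_n)$) cannot be obtained by inspecting an arbitrary sequence of fibres; it requires the following disjointness trick, which is the paper's key move. Take disjoint open sets $U_0\ni a$, $U_1\ni b$ in $L$. Since (after the re-choosing above) all fibres over $D$ are singletons, $\varphi(U_0)\cap\varphi(U_1)=\{x_0\}$; if both $\varphi(U_0)$ and $\varphi(U_1)$ were neighbourhoods of $x_0$, i.e.\ cofinite, then $\{x_0\}$ would be cofinite, absurd for infinite $K$. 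Hence for some $i$ the set $D\setminus\varphi(U_i)$ is infinite, and the singleton fibres over it avoid $U_i$ entirely. Taking $f$ with $f(z_i)=1$, $0\le f\le 1$, supported in $U_i$, and $V=\{\mu: \mu(f)>\tfrac14\}$, the measures $\nu=\frac12\delta_y-\frac12\delta_{y'}$ with distinct $y,y'\in D\setminus\varphi(U_i)$ converge weak$^*$ to $0=\varphi^*\bigl(\frac12\delta_{z_i}-\frac12\delta_{z_{1-i}}\bigr)$, and any $\mu'\in V$ with $\varphi^*\mu'=\nu$ must — since $\|\mu'\|\le 1$ while $|\nu|$ has total mass $1$ — carry all its variation on $\varphi^{-1}(\{y,y'\})$, which is disjoint from $U_i$, contradicting $\mu'(f)>\tfrac14$. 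Note the sign issue you glossed over is real: one cannot decide in advance which of $a,b$ carries the positive bump, since the choice of $i$ depends on which image fails to be a neighbourhood of $x_0$. Your lower-semicontinuity reformulation and the probability barycentre $\frac12\delta_a+\frac12\delta_b$ are an unnecessary detour: the same signed target $0$ works in the residual case as well. As written, then, the delicate half of the proposition remains unproven.
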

\begin{proof}
We will consider two cases:\smallskip\\
{\bf Case 1.} There exist distinct points $x,y\in K$ such that $|\varphi^{-1}(x)|>1$ and $y$ is an accumulation point of $K$. Pick disjoint neighborhoods $U_x$ of $x$ and $U_y$ of $y$. Take two distinct points $z_0,z_1\in \varphi^{-1}(x)$ and a continuous function $f: L\to [0,1]$ such that $f(z_i)=i$ and $f^{-1}((0,1])\subseteq \varphi^{-1}(U_x)$. Consider the open set $V=\{\mu\in M_1(L): \mu(f)>1/4\}$. We will show that its image $\varphi^*(V)$ is not open in  $M_1(K)$. Clearly, we have
\[\mu = (1/2)\delta_{z_1} - (1/2)\delta_{z_0} \in V \mbox{ and  } \varphi^*(\mu)= 0.\]
 Let $W$ be any open neighborhood of $0$ in $M_1(K)$. Since $y$ is an accumulation point, we can find $y'\in U_y\setminus\{y\}$ such that $\nu = (1/2)\delta_{y} - (1/2)\delta_{y'} \in W$. One can easily check that $\nu\notin \varphi^*(V)$.\smallskip\\
{\bf Case 2.} It is clear that if the Case 1 does not hold, then there exists an accumulation point $x$ in $K$ such that $\varphi^{-1}(x)$ is the only nontrivial fiber of $\varphi$.  Take two distinct points $z_0,z_1\in \varphi^{-1}(x)$ and disjoint neighborhoods $U_0,U_1$ of $z_0,z_1$, respectively, in $L$. Then we have $\varphi(U_0)\cap \varphi(U_1)=\{x\}$, hence there is an $i\in\{0,1\}$ such that $\varphi(U_i)$ is not a neighborhood of $x$. Find a a continuous function $f: L\to [0,1]$ such that $f(z_i)=1$  and $f^{-1}((0,1])\subseteq U_i$. We define the open set $V=\{\mu\in M_1(L): \mu(f)>1/4\}$ as in Case 1. Again, we have
\[\mu = (1/2)\delta_{z_i} - (1/2)\delta_{z_{1-i}} \in V \mbox{ and  } \varphi^*(\mu)= 0.\]
 Take any  open neighborhood $W$ of $0$ in $M_1(K)$. Since $x$ is an accumulation point, we can find distinct points $y,y'\in K\setminus \varphi(U_i)$ such that $\nu = (1/2)\delta_{y} - (1/2)\delta_{y'} \in W$. One can easily verify that $\nu\notin \varphi^*(V)$, hence $\varphi^*(V)$ is not open in  $M_1(K)$.
\end{proof}

\begin{proposition}\label{not_AR_om1}
Let $K$ be a compact space of weight $\omega_1$. Then the space $M_1(K)$ is not a Dugundji space.
\end{proposition}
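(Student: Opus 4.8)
The plan is to realize $M_1(K)$ as the limit of a continuous $\omega_1$-indexed inverse system of metrizable compacta and then to derive a contradiction from Proposition \ref{not_open} via the spectral theorem. First I would represent $K$ itself as a continuous inverse limit $K=\varprojlim_{\alpha<\omega_1}(K_\alpha,\pi^\beta_\alpha)$, where each $K_\alpha$ is a metrizable compact space, all bonding maps $\pi^\beta_\alpha\colon K_\beta\to K_\alpha$ and limit projections $\pi_\alpha\colon K\to K_\alpha$ are surjective, and the system is continuous at limit stages; such a representation exists for any compactum of weight $\omega_1$ (embed $K$ into $[0,1]^{\omega_1}$ and project onto initial segments of coordinates). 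Discarding a small initial segment, I may assume every $K_\alpha$ is infinite. Applying the functor $M_1$ turns this into a continuous inverse system $M_1(K)=\varprojlim_{\alpha<\omega_1}(M_1(K_\alpha),(\pi^\beta_\alpha)^*)$ with limit projections $P_\alpha=(\pi_\alpha)^*$; each $M_1(K_\alpha)$ is metrizable (since $C(K_\alpha)$ is separable), and all $(\pi^\beta_\alpha)^*$ and $P_\alpha$ are the canonical surjections recalled just before Proposition \ref{not_open}. The weak$^*$ continuity of the $M_1$ functor (equivalently $C(K)=\overline{\bigcup_\alpha C(K_\alpha)}$) guarantees that this system is again continuous, and $M_1(K)$ has weight $\omega_1$.

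Next, suppose towards a contradiction that $M_1(K)$ were a Dugundji space. Since $M_1(K)$ has weight $\omega_1$ and the spectrum above consists of metrizable compacta with surjective bonding maps, Shchepin's spectral theorem, together with the characterization of Dugundji compacta as openly generated spaces (cf.\ \cite{Ha74}), yields a closed unbounded set $C\sub\omega_1$ such that the projection $P_\alpha\colon M_1(K)\to M_1(K_\alpha)$ is an open map for every $\alpha\in C$. The point of invoking the spectral theorem is precisely that openness of the bonding maps in the witnessing spectrum of a Dugundji space is reflected, on a club of indices, in the \emph{canonical} spectrum $\{M_1(K_\alpha)\}$; checking that this latter spectrum is factorizing, so that the theorem applies, is where I expect the main technical care to be needed.

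Now fix any $\alpha\in C$. Because $K$ is nonmetrizable while each $K_\alpha$ is metrizable, the projection $\pi_\alpha$ cannot be injective (a continuous bijection of compacta is a homeomorphism). Choosing $x\neq y$ in $K$ with $\pi_\alpha(x)=\pi_\alpha(y)$, and recalling that $x,y$ are separated at some stage $\beta<\omega_1$ of the limit, I obtain $\beta>\alpha$ with $\pi_\beta(x)\neq\pi_\beta(y)$; then $\pi^\beta_\alpha$ collapses the distinct points $\pi_\beta(x),\pi_\beta(y)$, so $\pi^\beta_\alpha\colon K_\beta\to K_\alpha$ is a non-injective surjection onto the infinite space $K_\alpha$. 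By Proposition \ref{not_open} the map $(\pi^\beta_\alpha)^*$ is \emph{not} open. On the other hand, from $P_\alpha=(\pi^\beta_\alpha)^*\circ P_\beta$, the surjectivity of $P_\beta$, and the openness of $P_\alpha$, one computes for any open $V\sub M_1(K_\beta)$ that $(\pi^\beta_\alpha)^*(V)=P_\alpha\big(P_\beta^{-1}(V)\big)$, which is open; hence $(\pi^\beta_\alpha)^*$ \emph{is} open. This contradiction shows that $M_1(K)$ is not a Dugundji space.

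The only genuinely non-elementary ingredient is the spectral-theoretic step of the second paragraph; everything else, namely the construction of the $M_1$-spectrum, the location of a non-injective bonding map $\pi^\beta_\alpha$ above an index of the club, and the elementary openness-transfer identity $(\pi^\beta_\alpha)^*(V)=P_\alpha\big(P_\beta^{-1}(V)\big)$, is routine once Proposition \ref{not_open} is available.
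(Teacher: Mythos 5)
Your proof is correct and follows essentially the same route as the paper's: both arguments combine the Haydon--Shchepin representation of a Dugundji space of weight $\omega_1$ as the limit of a continuous $\omega_1$-sequence of metrizable compacta with open maps, the continuity of the canonical spectrum $\langle M_1(K_\alpha),(\pi^\beta_\alpha)^*\rangle$ with limit $M_1(K)$, Shchepin's spectral theorem, and Proposition \ref{not_open}. The only (cosmetic) difference is that the paper fixes a spectrum for $K$ with all bonding maps non-injective and contradicts open versus non-open directly on the isomorphic subsequences supplied by the spectral theorem, whereas you extract a club of open limit projections and then transfer openness to a single non-injective bonding map via the identity $(\pi^\beta_\alpha)^*(V)=P_\alpha\big(P_\beta^{-1}(V)\big)$.
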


\begin{proof} Assume towards a contradiction that $M_1(K)$ is a Dugundji space. Then by a result of Haydon, cf. \cite{Ha74}, \cite{Sh76}, $M_1(K)$ is an inverse limit of a continuous inverse sequence $\langle L_\alpha,p^\beta_\alpha,\omega_1\rangle$, where all spaces $L_\alpha$ are metrizable and all bonding maps $p^\beta_\alpha$ are open. Let  $\langle K_\alpha,q^\beta_\alpha,\omega_1\rangle$ be any continuous inverse sequence with all spaces $K_\alpha$ infinite metrizable, all bonding maps $q^\beta_\alpha$ non-injective, and the limit homeomorphic to $K$. Then one can easily verify that the inverse system $\langle M_1(K_\alpha),(q^\beta_\alpha)^*,\omega_1\rangle$ is continuous and its limit is homeomorphic to $M_1(K)$. Then, by Shchepin's spectral theorem \cite{Sh76} the sequences $\langle L_\alpha,p^\beta_\alpha,\omega_1\rangle$ and $\langle M_1(K_\alpha),(q^\beta_\alpha)^*,\omega_1\rangle$ would contain isomorphic subsequences, which is impossible, since the maps $p^\beta_\alpha$ are open and the maps $(q^\beta_\alpha)^*$ are  not open by Proposition \ref{not_open}.
\end{proof}

We need to recall some notions and results from \cite{Ku04} (essentially, these ideas go back to \cite{Sh76}). Let $X=\Pi_{\alpha<\kappa} X_\alpha$ be the product of metrizable compact spaces $X_\alpha$, and let $r:X\to Y$ be a retraction. A subset $S\sub \kappa$ is \emph{$r$-admissible} if $x|S=x'|S$ implies $r(x)|S=r(x')|S$ for all $x,x'\in X$. Obviously the union of any family of $r$-admissible subsets is $r$-admissible.
For $y\in Y$ let $p_S:Y\to \Pi_{\alpha\in S} X_\alpha$ be defined by $p_S(y)=y|S$ and let $Y_S=p_S(Y)$. Each countable subset of $\kappa$ is contained in a countable $r$-admissible subset, and if $S\sub \kappa$ is $r$-admissible then the map $p_S:Y\to Y_S$ is right-invertible, i.e., there exists a continuous map $j: Y_s\to Y$ such that $p_S\circ j = \mathrm{id}_{Y_S}$, hence $Y_S$ is homeomorphic to a retract $j(Y_S)$ of $Y$, cf. \cite{Ku04} and \cite{Sh76}.

\begin{proof}[Proof of Theorem \ref{not_AR}]
Suppose that $K$ is a nonmetrizable compact space such that the space $M_1(K)$ is a Dugundji space, hence an absolute retract. We will show that there exists a continuous image $L$ of $K$ of  weight $\omega_1$ such that $M_1(L)$ is homeomorphic to a retract of $M_1(K)$. This will give a contradiction with Proposition \ref{not_AR_om1}. Let $\mathcal{F}=\{f_t:K\to K_t: t\in T\}$ be the family of all continuous surjections of $K$ onto a subspace of $[0,1]^\omega$. Then the diagonal map
\[ \varphi=\bigtriangleup_{t\in T} f_t^*: M_1(K)\to \Pi_{t\in T} M_1(K_t),\]
is an embedding of $M_1(K)$ into the product of metrizable compacta. Let $Y=\varphi(M_1(K))$ and let $r: \Pi_{t\in T} M_1(K_t)\to Y$ be a retraction. Fix a subset $\{t_\alpha: \alpha<\omega_1\}\sub T$ such that the image of the diagonal map
\[ \bigtriangleup_{ \alpha<\omega_1} f_{t_\alpha}: K\to \Pi_{ \alpha<\omega_1} K_{t_\alpha},\]
is of weight $\omega_1$.
For any countable subset $S\sub T$ fix a countable $r$-admissible subset $\eta(S)\sub T$ containing $S$. By induction we will define the family of $r$-admissible countable sets $S_\alpha\sub T$ for $\alpha<\omega_1$. We start with $S_0=\eta (\{t_0\})$. Suppose that we have defined the sets $S_\beta$ for $\beta<\alpha$. Put $P_\alpha= \bigcup\{S_\beta: \beta<\alpha\}\cup\{t_\alpha\}$ and let $s_\alpha\in T$ be such that $f_{s_\alpha}= \bigtriangleup_{t\in P_\alpha} f_{t}: K\to \Pi_{t\in P_\alpha} K_{t}$. We define $S_\alpha = \eta(P_\alpha\cup\{s_\alpha\})$. Finally we put $S = \bigcup\{S_\alpha: \alpha<\omega_1\}$. The set $S$ is $r$-admissible, hence the map $p_S:Y\to Y_S$ is right-invertible, so $Y_s$ is homeomorphic to a retract of $Y$. Let $L$ be the image of $K$ under the diagonal map $\bigtriangleup_{t\in S} f_{t}: K\to \Pi_{t\in S} K_{t}$. The use of indexes $t_\alpha$ in our construction guaranties that $L$ has weight $\omega_1$. A routine verification shows that $Y_s$ is homeomorphic to $M_1(L)$.
\end{proof}

\section{Countable discrete extensions of dyadic compacta}\label{dyadic}

If $\fA$ is a subalgebra of the algebra of all subsets of $\omega$ containing all finite sets then its Stone space $\ult(\fA)$ can be seen as a compactification of $\omega$
because one can identify every $n\in\omega$ with the corresponding principal ultrafilter.
Note that  $\ult(\fA/\fin)$ is homeomorphic to the remainder of such a compactification.
 In this setting, the equivalence of conditions (i) and (iii) from Lemma \ref{p:2} can be stated as follows (see Lemma 3.1 in \cite{DP16}).

\begin{lemma}\label{d:1}
Let $\fA$ be an algebra such that $\fin\sub\fA\sub P(\omega)$.
Then the compactification $\ult(\fA)$ of $\omega$ is tame if and only if there exists a bounded
sequence $(\nu_n)_n$ in $\ba(\fA)$  such that

\begin{enumerate}[(i)]
\item $\nu_n|\fin \equiv 0$ for every $n$, and
 \item $\nu_n-\delta_n\to 0$ on $\fA$, that is $(\nu_n - \delta_n)(A) \to 0$ for every $A\in\fA$.
 \end{enumerate}
\end{lemma}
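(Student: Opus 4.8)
The plan is to read the statement off directly from Lemma~\ref{p:2}, after identifying the relevant compact spaces and translating the two topological notions involved (being an extension operator, respectively $weak^*$ convergence) into the purely measure-theoretic language of $\fA$.

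First I would set $K=\ult(\fA)\sm\omega$, the remainder of the compactification, so that $L'=\ult(\fA)\in\cde(K)$ with $L'\sm K=\omega$ identified with the isolated principal ultrafilters. By definition, $\ult(\fA)$ is tame precisely when the natural copy of $c_0$ in $C(\ult(\fA))$ is complemented, which --- as recorded just after the definition of tameness --- is equivalent to the existence of an extension operator $E\colon C(K)\to C(\ult(\fA))$. Applying the equivalence (i)$\Leftrightarrow$(iii) of Lemma~\ref{p:2} with $L=K$ and $L'=\ult(\fA)$, tameness is thus equivalent to the existence of a bounded sequence $(\nu_n)_n$ in $M(K)$ with $\nu_n-\delta_n\to 0$ in the $weak^*$ topology of $M(\ult(\fA))$.

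It then remains to rephrase these two conditions over $\fA$. Under the identification $M(\fA)=M(\ult(\fA))$, each singleton $\{n\}$ is clopen (every finite set belongs to $\fA$), so a measure $\nu\in M(\fA)$ satisfies $\nu|\fin\equiv 0$ exactly when it assigns zero mass to every isolated point, i.e.\ when it is carried by the remainder $K$; hence the sequences appearing in Lemma~\ref{p:2}(iii) are precisely the bounded sequences in $M(\fA)$ satisfying condition (i). For the convergence, I would use that the linear span of $\{\chi_A:A\in\fA\}$ is dense in $C(\ult(\fA))$, these being exactly the locally constant functions on the zerodimensional (Stone) space $\ult(\fA)$, which separate points and contain the constants. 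For a norm-bounded sequence, $weak^*$ convergence to $0$ is therefore equivalent to testing against this dense family, i.e.\ to $(\nu_n-\delta_n)(A)\to 0$ for every $A\in\fA$, which is condition (ii). Combining the two translations yields the asserted equivalence.

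The content here is essentially bookkeeping, so I do not expect a serious obstacle; the only point requiring a little care is the last step, where one must invoke boundedness of $(\nu_n-\delta_n)_n$ (which follows from $\sup_n\|\nu_n\|<\infty$ and $\|\delta_n\|=1$) in order to pass from convergence on the spanning family $\{\chi_A\}$ to genuine $weak^*$ convergence against all of $C(\ult(\fA))$, since without boundedness pointwise convergence on a dense set need not give $weak^*$ convergence.
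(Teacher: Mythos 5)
Your proposal is correct and matches the paper's route exactly: the paper states Lemma~\ref{d:1} as the specialization of the equivalence (i)$\Leftrightarrow$(iii) of Lemma~\ref{p:2} to $L=\ult(\fA/\fin)$ (the remainder) and $L'=\ult(\fA)$, citing \cite[Lemma 3.1]{DP16}, which is precisely your reduction. Your two translation steps --- vanishing on $\fin$ being equivalent to being carried by the remainder, and norm-boundedness upgrading convergence on the norm-dense span of $\{\chi_A: A\in\fA\}$ to $weak^*$ convergence --- correctly fill in the bookkeeping the paper leaves implicit.
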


A Boolean algebra $\fB$ is called {\em dyadic} if it can be embedded into a free algebra $\clop(2^\kappa)$ for some cardinal number
$\kappa$, that is if $\ult(\fA)$ is a dyadic compactum, i.e., a continuous image of some Cantor cube $2^\kappa$ (\cite{En89}).
Recall that for $L=2^\kappa$ and $L'\in\cde(L)$ there is a retraction from $L'$ onto $L$ so, in particular, there is an extension operator
$C(L)\to C(L')$. We give below examples showing that this is no longer true if we replace here $2^\kappa$ by its continuous image. Let us recall that a family $\mathcal{C}$ of subsets of a set $X$ is {\em independent} if for any disjoint finite $\mathcal{E},\mathcal{F}\subseteq \mathcal{C}$ it holds that $\bigcap_{E\in\mathcal{E}}E\cap\bigcap_{F\in\mathcal{F}}(X\setminus F)\ne\emptyset$.
It is well-known that there is an independent family $\mathcal{E}$ of subsets of $\omega$ such that $|\mathcal{E}|=\con$, see e.g.\ \cite[Lemma 7.7]{Je03}.

\begin{lemma}\label{d:2}
Let $\fB$ be a Boolean algebra generated by a family $\GG$ of size $\kappa$ such that $\GG=\bigcup_n \GG_n$, where
every $\GG_n$ is an independent family and for every $k\neq n$, if $a\in\GG_k$ and $b\in\GG_n$ then $a\cap b=0$.

Then $\fB$ embeds into $\clop(2^\kappa)$.
\end{lemma}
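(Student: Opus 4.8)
The plan is to produce an injective Boolean homomorphism $h\colon\fB\to\clop(2^\kappa)$ by prescribing it on the generators and invoking the standard criterion that a map $g\mapsto h(g)$ on a generating family extends to an \emph{embedding} precisely when it both preserves and reflects the vanishing of finite elementary products: writing $a^1=a$, $a^0=-a$, one needs that for every finite $\GG_0\sub\GG$ and every sign pattern $\eps\colon\GG_0\to\{0,1\}$,
\[
\bigwedge_{g\in\GG_0} g^{\eps(g)}=0 \text{ in }\fB
\iff
\bigwedge_{g\in\GG_0} h(g)^{\eps(g)}=0 \text{ in }\clop(2^\kappa).
\]
Assuming $\kappa$ infinite (the finite case being immediate, as $\fB$ is then finite with at most $2^\kappa$ atoms), I would identify $2^\kappa\cong 2^\omega\times 2^\GG$, fix pairwise disjoint nonempty clopen sets $R_n\sub 2^\omega$ (e.g.\ $R_n=\{x:x_0=\cdots=x_{n-1}=0,\ x_n=1\}$), let $V_g=\{y:y_g=1\}$ be the $g$-th subbasic clopen set of $2^\GG$, and set
\[
h(g)=R_n\times V_g\qquad(g\in\GG_n).
\]

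Next I would check the displayed equivalence by splitting $\eps$ into its positive part $P=\{g:\eps(g)=1\}$ and its negative part. If $P$ meets two different families, cross\nobreakdash-disjointness forces the left-hand product to be $0$, while on the right the two factors $R_n$ are disjoint, so both sides vanish. If $P$ is a nonempty subset of a single $\GG_n$, then the negative factors coming from other families are redundant on both sides (a point of $R_n$ lies outside every $R_m$ with $m\ne n$, and a positive generator of $\GG_n$ lies below the complement of any generator of another family); what is left is an elementary product inside $\la\GG_n\ra$, nonzero by independence of $\GG_n$, matching a nonzero product $R_n\times(\text{nonempty basic set})$ on the right. The remaining case $P=\emptyset$, i.e.\ the all-negative pattern $\bigwedge_{g\in\GG_0}(-g)$, is where the difficulty lies: on the right it always contains a point $(\bar0,y)$ with $\bar0\notin\bigcup_m R_m$, hence is never $0$, whereas in $\fB$ it vanishes exactly when $\bigvee_{g\in\GG_0}g=1$.

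This all-negative case is the one genuine obstacle, and I expect it to be the crux: a finite subfamily can join to $1$ in $\fB$ (already for the four\nobreakdash-element algebra with $\GG_1=\{a\}$, $\GG_2=\{-a\}$), and then $h$ is not even well defined. To eliminate it I would first note that for $\fB$ \emph{without} such a finite cover, i.e.\ with $\bigvee_{g\in\GG_0}g\ne1$ for all finite $\GG_0\sub\GG$, the construction above gives the desired embedding. If $\fB$ does admit a finite cover, I would pass to $\fB^+=\fB\times2$ with generators $(g,0)$ grouped as $\GG_n^+=\{(g,0):g\in\GG_n\}$. A short verification shows that $\GG^+$ generates $\fB^+$ (a cover $\GG_0$ yields the missing element $(0,1)=\bigwedge_{g\in\GG_0}(-g,1)$), that each $\GG_n^+$ is independent, that the families remain cross\nobreakdash-disjoint, that $|\GG^+|=\kappa$, and that $\fB^+$ has no finite cover since $\bigvee_g(g,0)\le(1,0)\ne 1_{\fB^+}$. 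The cover\nobreakdash-free case then embeds $\fB^+$ into $\clop(2^\kappa)$, carrying $(1,0)$ to a clopen set $e$; as every nonempty clopen subset of $2^\kappa$ is homeomorphic to $2^\kappa$ for infinite $\kappa$, we get $\clop(e)\cong\clop(2^\kappa)$, and the relative algebra $\fB^+\uhr(1,0)\cong\fB$ embeds into $\clop(e)$, which completes the argument.
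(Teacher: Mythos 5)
Your proof is correct, and its core construction coincides with the paper's: the paper takes pairwise disjoint clopen sets $D(n)\sub 2^\kappa$, chooses inside each $D(n)$ an independent family of clopen sets $\{D_\xi(n):\xi<\kappa\}$, and sends $\GG_n$ into the $n$-th family --- which is exactly your $h(g)=R_n\times V_g$ after identifying $2^\kappa$ with $2^\omega\times 2^{\GG}$. Where you genuinely diverge is at the extension step. The paper disposes of it with the single phrase that the assignment ``extends to a Boolean embedding in an obvious way,'' while you verify the Sikorski criterion case by case, and in doing so you isolate a real degenerate obstruction that the paper's one-line conclusion glosses over: when finitely many generators join to $1$ in $\fB$ (your four-element example $\GG_1=\{a\}$, $\GG_2=\{-a\}$ does satisfy all the hypotheses), the all-negative elementary product vanishes in $\fB$ but not in the image, so the prescribed assignment does not extend to a homomorphism at all, even though the lemma's conclusion remains true. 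Your repair --- passing to $\fB\times 2$ with generators $(g,0)$, checking that the new families are again independent, cross-disjoint, of size $\kappa$, that they generate $\fB\times 2$ precisely because a finite cover exists, and that no finite subfamily of the new generators covers; then applying the cover-free case and cutting back down via the relative algebra below $(1,0)$, using that every nonempty clopen subset of $2^\kappa$ is homeomorphic to $2^\kappa$ for infinite $\kappa$ --- is sound, and it is what makes the lemma fully proved in the stated generality. It is worth observing that in the paper's actual applications (Examples \ref{d:3} and \ref{d:4}) the generators are, modulo finite sets, subsets of proper pieces of the underlying partition of $\omega$, so no finite subfamily covers and the direct extension is legitimate there; as a freestanding statement, however, your more careful version is the one whose proof is complete, so what your route buys is precision exactly where the paper's argument is too quick.
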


\begin{proof}
Take a pairwise disjoint sequence $D(n)$ in $\clop(2^\kappa)$ and for every $n$ choose independent family
$\{D_\xi(n):\xi<\kappa\}$, where every $D_\xi(n)$ is a clopen subset of $D(n)$.

Write $\GG_n=\{g_\xi(n):\xi<\kappa_n\}$, where $\kappa_n\le\kappa$. Define $\vf$ setting  $\vf(g_\xi(n))=D_\xi(n)$ for
every $n$ and $\xi<\kappa_n$. Then $\vf$ extends to a Boolean embedding $\fA\to \clop(2^\kappa)$ in a obvious way.
\end{proof}

\begin{example}\label{d:3}
There is a dyadic compactum $L$ of weight $\omega_1$ and $L'\in \cde(L)$ such that there is no extension operator
$E:C(L)\to C(L')$ with $\|E\|<2$.
\end{example}

\begin{proof}
Divide $\omega$ into three infinite sets $P, Q_1,Q_2$. Recall that, for $A,B\subseteq\omega$, $A\sub^* B$ means that the set $A\sm B$ is finite;
in such a case we say that $B$ almost contains $A$.

On $P$ we consider a Hausdorff gap, see e.g.\ \cite[Theorem 29.7]{Je03}: take $A_\alpha, B_\alpha\sub P$, $\alpha<\omega_1$ such that

\begin{enumerate}[(a)]
\item $A_\alpha\sub^* A_\beta, B_\alpha\sub^* B_\beta$ for $\alpha<\beta<\omega_1$;
\item $A_\alpha\cap B_\beta$ is finite for every $\alpha,\beta<\omega_1$;
\item there is no $X\sub P$ satisfying $A_\alpha\sub^*X\sub^*P\sm B_\beta$ for $\alpha,\beta<\omega_1$.
\end{enumerate}

For $i=1,2$, we choose a family $\{C_\alpha(i):\alpha<\omega_1\}$ of independent subsets of $Q_i$ and
 define a subalgebra $\fA$  of $P(\omega)$ generated by $\fin$ and all the sets
\[G_\alpha(1)= C_\alpha(1)\cup A_\alpha,   G_\alpha(2)=C_\alpha(2)\cup B_\alpha, \alpha<\omega_1.\]
By Lemma \ref{d:2} the algebra $\fA/\fin$ is dyadic.

Now $L'=\ult(\fA)$ is a countable discrete extension of  $L=\ult(\fA/\fin)$.
 Suppose that there is an extension operator $E:C(L)\to C(L')$ such that $r=\|E\|<2$.
 Take a sequence $(\nu_n)_n$ in $\ba(\ult(\fA))$ as in Lemma \ref{p:2}(iii). In the sequel, we treat every $\nu_n$ as a member of $M(\fA)$, cf.\
 the remarks at the end of Section 2.

Then $\|\nu_n\|\le r<2$ for every $n$. Take $\delta>0$ such that $r<2-2\delta$.
For every $\alpha<\omega$ put
\[ \widehat{A}_\alpha=\{n\in A_\alpha: \nu_n(G_\alpha(1))>1-\delta\}.\]
Then $A_\alpha\sub^*\widehat{A}_\alpha$ since $\lim_{n\in A_\alpha} \nu_n(G_\alpha(1))=1$. Hence the set $X=\bigcup_{\alpha<\omega_1} \widehat{A}_\alpha$
almost contains every $A_\alpha$. On the other hand, for every $\beta<\omega_1$, $B_\beta\cap X$ must be finite: otherwise,
there is $n\in B_\beta\cap X$ such that $\nu_n(G_\beta(2))>1-\delta$. Since $n\in X$, $n\in \widehat{A}_\alpha$ for some $\alpha$ so
$\nu_n(G_\alpha(1))>1-\delta$. But $G_\alpha(1)\cap G_\beta(2)$ is finite so  $\nu_n(G_\alpha(1)\cap G_\beta(2))=0$.
It follows that
\[ \|\nu_n\|\ge \nu_n(G_\alpha(1))+\nu_n(G_\beta(2))>2-2\delta>r,\]
 contrary to our assumption.

 In this way we have checked that $X$ separates the gap, which is impossible.
\end{proof}

It is a well-known fact from the theory of absolute retracts that a metrizable compactum $M$ is an absolute retract, provided it is a union of two compact absolute retracts $M_1, M_2$ whose intersection $M_1\cap M_2$ is also an absolute retract. It is also known that this is not the case without the metrizability assumption. Our Example \ref{d:3} can be applied to demonstrate this.

\begin{corollary}\label{AR_union} Let $K = 2\times [0,1]^{\omega_1}$, and $x$ be a fixed point of $[0,1]^{\omega_1}$. The quotient space $M$ obtained from $K$ by identification of the points $(0,x)$ and $(1,x)$ is the union of two copies of $[0,1]^{\omega_1}$ intersecting at a single point, yet it is not an absolute retract.
\end{corollary}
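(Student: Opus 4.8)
The first, topological, assertion is immediate from the construction. Writing $C_0$ and $C_1$ for the two copies of $[0,1]^{\omega_1}$ (the images of $\{0\}\times[0,1]^{\omega_1}$ and $\{1\}\times[0,1]^{\omega_1}$), the space $M=C_0\cup C_1$ is the union of two Tychonoff cubes, each of which is an absolute retract, meeting exactly in the identified point $p$, which is trivially an absolute retract. Thus $M$ is the advertised union, and the whole point is to verify that, in contrast with the metrizable case, $M$ itself is \emph{not} an absolute retract. Using the homogeneity of the Tychonoff cube we may assume that the distinguished point $x$, and hence the wedge point $p$, has all coordinates equal to $0$ in both cubes; then the coordinate functions $f_\alpha$ (equal to the $\alpha$-th coordinate on $C_0$ and to $0$ on $C_1$) and $g_\alpha$ (equal to the $\alpha$-th coordinate on $C_1$ and to $0$ on $C_0$) are well defined and continuous on $M$, with $f_\alpha$ vanishing off $C_0$ and $g_\alpha$ vanishing off $C_1$.

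The plan is to produce a countable discrete extension $M'\in\cde(M)$ that admits no retraction onto $M$; since an absolute retract is a retract of every compact superspace, this will show that $M$ is not an absolute retract. I would build $M'$ from a Hausdorff gap $(A_\alpha,B_\alpha)_{\alpha<\omega_1}$ on $\omega$, the same combinatorial object driving Example \ref{d:3}. For $n\in\omega$ set $z_n=\big((\chi_{A_\alpha}(n))_\alpha,(\chi_{B_\alpha}(n))_\alpha,t_n\big)$ in the product $[0,1]^{\omega_1}\times[0,1]^{\omega_1}\times[0,1]$, where the $t_n\in(0,1]$ are distinct and tend to $0$, and where $M$ is identified with $(C_0\times\{\mathbf 0\}\cup\{\mathbf 0\}\times C_1)\times\{0\}$. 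The decisive point is that every accumulation point of $\{z_n\}$ lands in $M$: if such a point had a positive $\alpha$-th coordinate in the first factor and a positive $\beta$-th coordinate in the second, then a subnet of the $z_n$ would eventually lie in $A_\alpha\cap B_\beta$, which is finite by the gap condition — impossible. Hence $M'=M\cup\{z_n:n\in\omega\}$ is compact, the extra points are isolated (this is where the separating coordinate $t_n$ is used, exactly as in the proof of Proposition \ref{p:12}), and $M'\in\cde(M)$. The coordinate projections restrict to continuous functions $\widehat{f_\alpha},\widehat{g_\alpha}\in C(M')$ extending $f_\alpha,g_\alpha$, with $\widehat{f_\alpha}(z_n)=\chi_{A_\alpha}(n)$ and $\widehat{g_\alpha}(z_n)=\chi_{B_\alpha}(n)$.

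Suppose now, towards a contradiction, that $M$ is an absolute retract and fix a retraction $\rho:M'\to M$; put $m_n=\rho(n)\in M$. Since $f_\alpha\circ\rho$ and $\widehat{f_\alpha}$ are continuous functions on $M'$ agreeing on $M$, Lemma \ref{ts:0.25} gives $f_\alpha(m_n)-\chi_{A_\alpha}(n)\to 0$, and likewise $g_\alpha(m_n)-\chi_{B_\alpha}(n)\to 0$. Define $\widehat{A}_\alpha=\{n\in A_\alpha: f_\alpha(m_n)>1/2\}$ and $X=\bigcup_{\alpha<\omega_1}\widehat{A}_\alpha$. As $f_\alpha(m_n)\to 1$ along $A_\alpha$ we get $A_\alpha\sub^*\widehat A_\alpha$, so $A_\alpha\sub^* X$ for every $\alpha$. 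On the other hand $X\cap B_\beta$ is finite for each $\beta$: if $n\in X\cap B_\beta$ then $f_\alpha(m_n)>1/2$ for some $\alpha$, forcing $m_n\in C_0\setminus\{p\}$ since $f_\alpha$ vanishes off $C_0$, while for all large such $n$ also $g_\beta(m_n)>1/2$, forcing $m_n\in C_1\setminus\{p\}$; as $C_0\cap C_1=\{p\}$ this is impossible for all but finitely many $n\in B_\beta$. Thus $A_\alpha\sub^* X\sub^*\omega\setminus B_\beta$ for all $\alpha,\beta$, i.e. $X$ fills the gap, contradicting its defining property. Hence no retraction exists and $M$ is not an absolute retract.

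The heart of the argument — and the place where Example \ref{d:3} is really being applied — is the single observation that $m_n$ cannot satisfy $f_\alpha(m_n)>1/2$ and $g_\beta(m_n)>1/2$ simultaneously, because the two cubes meet only at $p$. This is the exact geometric counterpart of the norm bound $\|\nu_n\|<2$ in Example \ref{d:3}: there a measure of norm below $2$ could not carry mass close to $1$ on both $G_\alpha(1)$ and $G_\beta(2)$, while here a single point of the wedge cannot lie deep inside both cubes at once. I expect the only genuine technical nuisance to be the verification that $M'$ is a legitimate countable discrete extension of $M$ — namely compactness together with the isolatedness of the points $z_n$ — which is routine and parallels the construction in Proposition \ref{p:12}; everything else is the gap combinatorics already packaged in Example \ref{d:3}.
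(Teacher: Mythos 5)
Your proof is correct, and it takes a genuinely different route from the paper's. The paper never builds a countable discrete extension of $M$ itself: it locates inside $M$ the wedge $S$ of the two Cantor cubes $\{0,1\}^{\omega_1}$, identifies $S$ with the space $L$ of Example \ref{d:3} via Stone duality (the independence of the families $G_\alpha(i)$ and the finiteness of the intersections $G_\alpha(1)\cap G_\beta(2)$), uses the Dugundji property of $\{0,1\}^{\omega_1}$ to get a norm-one extension operator $E':C(S)\to C(M)$, and then attaches the extension $S'\in\cde(S)$ from Example \ref{d:3} to form $M'=M\cup S'$; a retraction $r:M'\to M$ would produce the extension operator $f\mapsto E'(f)\circ (r|S')$ of norm $1$, contradicting the fact that no extension operator $C(S)\to C(S')$ of norm $<2$ exists. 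You instead embed $M$ as a wedge in $[0,1]^{\omega_1}\times[0,1]^{\omega_1}\times[0,1]$ and transplant the Hausdorff-gap combinatorics directly onto the points $z_n$, so that a hypothetical retraction $\rho$ yields points $m_n$ that cannot lie deep in both cubes at once, and the sets $\widehat{A}_\alpha$ then separate the gap. Your key verifications are sound: every accumulation point of $\{z_n\}$ has third coordinate $0$ and cannot have positive coordinates in both blocks (else infinitely many $n\in A_\alpha\cap B_\beta$), so it lies in $M$; the distinct $t_n\to 0$ isolate the $z_n$; and Lemma \ref{ts:0.25} applies verbatim to $f_\alpha\circ\rho$ and $\widehat{f_\alpha}$. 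What your route buys is self-containedness — no Stone duality, no Dugundji machinery, no quantitative norm bound — because a retraction is exactly the Dirac-measure (point-valued) case of an extension operator, and a single point, unlike a measure of norm just under $2$, cannot carry value $>1/2$ under both $f_\alpha$ and $g_\beta$. What the paper's route buys is a stronger intermediate conclusion (no extension operator of norm $<2$ for the pair $S\subseteq S'$, not merely no retraction) together with a reusable transfer mechanism, in the spirit of Remark \ref{p:11}: a norm-one extension operator from a subspace lets one push non-extension phenomena from $S$ up to any superspace such as $M$, which is how the paper recycles Example \ref{d:3} rather than repeating the gap argument.
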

\begin{proof}
We adopt the notation from the proof of Example \ref{d:3}.

Observe that, since the cube $[0,1]^{\omega_1}$ is homogeneous, the space $M$ does not depend on the choice of a point $x$. We can assume that $x\in\{0,1\}^{\omega_1}$. Let $S$ be the subspace of $M$ which is a quotient image of $2\times \{0,1\}^{\omega_1}\subseteq K$. Using the fact that $\{0,1\}^{\omega_1}$ is a Dugundji space, we can easily obtain an extension operator $E':C(S)\to C(M)$ of norm 1.

One can easily verify that $S$ is homeomorphic to the space $L$ from Example \ref{d:3}. Indeed, for $i=1,2$, let  $\fA_i$ be the subalgebra of $P(\omega)$ generated by $\fin$ and the family of sets $G_\alpha(i)$,  $\alpha<\omega_1$. These families are independent, hence $L_i=\ult(\fA_i/\fin)$ are homeomorphic to $\{0,1\}^{\omega_1}$. Since all intersections $G_\alpha(1)\cap G_\beta(2)$ are finite, we conclude that  $L$ is homeomorphic to $S$. Therefore, we can take $S'\in \cde(S)$ such that there is no extension operator
$E:C(S)\to C(S')$ with $\|E\|<2$. We can assume that $S'\setminus S$ is disjoint from $M$. Let $M'= M\cup S'$. If there was a retraction $r:M'\to M$, then the assignment
$f\mapsto E'(f)\circ (r|S')$ would define an extension operator from  $C(S)$ to $C(S')$ of norm 1, a contradiction.
\end{proof}

\begin{proposition}
Let $K$  be a compact space, such that, for some point $p\in K$, $K=\bigcup_{i=1}^n K_i$, where $K_i$ is a Dugundji space, and $K_i\cap K_j = \{p\}$, for all $i,j\le n, i\ne j$. Then, for any compact space $L$ containing $K$, there exists an extension operator $E:C(K)\to C(L)$ with $\|E\|\le 2n-1$.
\end{proposition}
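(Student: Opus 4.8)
The plan is to build $E$ from norm-one extension operators attached to the individual pieces, using the Dugundji property of each $K_i$, and to correct for the overcounting at the common point $p$ by an inclusion--exclusion term. We may assume $n\ge 2$, since for $n=1$ the space $K=K_1$ is Dugundji and a regular extension operator already has norm $1=2n-1$. For each $i$ put $K_i'=\bigcup_{j\ne i}K_j$; this is compact, $p\in K_i$ for every $i$, and $K_i\cap K_i'=\{p\}$.

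The central step is to produce, for each $i$, a norm-one operator $T_i\colon C(K_i)\to C(L)$ such that $T_i h|K_i=h$ and $T_i h|K_i'\equiv h(p)$ for every $h\in C(K_i)$; that is, the extension of $h$ is forced to be the constant $h(p)$ on the rest of $K$. Granting such $T_i$, I would define
\[ Eg=\sum_{i=1}^n T_i(g|K_i)-(n-1)\,g(p)\cdot\mathbf 1_L, \]
which is clearly linear into $C(L)$ and satisfies $\|Eg\|\le \sum_i\|g|K_i\|+(n-1)|g(p)|\le (2n-1)\|g\|$. To check that $Eg$ extends $g$, fix $x\in K_j$: then $T_j(g|K_j)(x)=g(x)$, while for $i\ne j$ we have $x\in K_i'$, so $T_i(g|K_i)(x)=g(p)$; hence $Eg(x)=g(x)+(n-1)g(p)-(n-1)g(p)=g(x)$.

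It remains to construct $T_i$, and here the only real difficulty lies: one cannot simply localize the Dugundji extension of $K_i$ to $L$ by a continuous cutoff that is $1$ on $K_i$ and $0$ on $K_i'$, because $p$ belongs to both sets. I would get around this by collapsing $K_i'$ to a point. Let $L_i=L/K_i'$ be the quotient obtained by identifying $K_i'$ to a single point $*_i$, with quotient map $\pi_i\colon L\to L_i$; since $K_i'$ is closed, $L_i$ is compact Hausdorff. Because $K_i\cap K_i'=\{p\}$, the restriction $\pi_i|K_i$ is injective, hence a homeomorphism onto a closed copy of $K_i$ inside $L_i$, sending $p$ to $*_i$. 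As $K_i$ is a Dugundji space, there is a regular (norm-one) extension operator $\widehat E_i\colon C(K_i)\to C(L_i)$ for this copy. Setting $T_i h=\widehat E_i(h)\circ\pi_i$ gives a norm-one operator with $T_i h|K_i=h$; and since $\pi_i$ sends all of $K_i'$ to $*_i=\pi_i(p)$ and $\widehat E_i(h)(*_i)=h(p)$, we get $T_i h|K_i'\equiv h(p)$, as required.

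The main obstacle, as indicated, is precisely the common point $p$, which obstructs every naive attempt to make the Dugundji extension of one piece vanish (or be constant) on the others; the quotient construction resolves this while using only that each $K_i$ is Dugundji, with no restriction on its weight (in contrast to arguments extending $P(K_i)$-valued maps, which would require $P(K_i)$ to be an absolute retract). The coefficient $2n-1$ then emerges exactly as $n$ norm-one contributions from the $T_i$ plus the $(n-1)$ correction needed to cancel the repeated value $g(p)$.
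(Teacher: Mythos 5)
Your proof is correct and follows essentially the same route as the paper: the paper also collapses $\bigcup_{j\ne i}K_j$ to a point via a quotient map $q_i:L\to L_i$, applies a norm-one extension operator for the homeomorphic copy $q_i(K_i)$, and defines $E(f)=\sum_i E_i(f|K_i\circ r_i)\circ q_i-(n-1)f(p)$, exactly your operator with the same $(n-1)$-fold correction at $p$ and the same norm bound $2n-1$.
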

\begin{proof}
For any $i\le n$, let $L_i$ be the quotient space obtained from $L$ by identifying all points from $\bigcup_{j\ne i} K_j$ with the point $p$, and let $q_i: L\to L_i$ be the quotient map. Clearly, $q_i$ maps $K_i$ homeomorphically onto $q_i(K_i)$. Let $r_i: q_i(K_i)\to K_i$ be the inverse homeomorphism.
By our assumption on $K_i$, we can find an extension operator $E_i: C(q_i(K_i)) \to C(L_i)$ of norm $1$. Now, we can define the extension operator $E:C(K)\to C(L)$ by
\[ E(f)(x)= \sum_{i=1}^{n} E_i(f|K_i\circ r_i)(q_i(x)) - (n-1)f(p),\]
for $f\in C(K)$ and $x\in L$. It is clear that, for each $f\in C(K)$, $E(f)$ is continuous on $L$. If $x\in K$, then $x\in K_i$, for some $i$, hence $q_j(x)=p$ and $ E_j(f|K_j\circ r_j)(q_j(x))=f(p)$ for $j\ne i$. Therefore $E(f)(x)= f(x)$. Obviously, we have $\|E\|\le 2n-1$, so $E$ is as desired.
\end{proof}

From the above Proposition and the proof of Corollary \ref{AR_union} we immediately obtain the following

\begin{corollary}
For the spaces $L$ and $L'$ from Example \ref{d:3} there exists an extension operator $E:C(L)\to C(L')$ of  norm 3.
\end{corollary}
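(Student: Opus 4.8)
The plan is to recognize the space $L$ of Example \ref{d:3} as a one-point union of two Dugundji spaces and then to quote the preceding Proposition with $n=2$, which turns such a decomposition (together with any compact superspace) into an extension operator of norm at most $2n-1=3$. Thus the whole argument reduces to putting $L$ into the required form and checking that $L'$ is an admissible superspace.

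First I would recall the decomposition already carried out in the proof of Corollary \ref{AR_union}. There $L$ is identified with $L_1\cup L_2$, where $L_i=\ult(\fA_i/\fin)$ and $\fA_i$ is generated by $\fin$ together with the independent family $\{G_\alpha(i):\alpha<\omega_1\}$; consequently each $L_i$ is homeomorphic to $2^{\omega_1}$ and hence is a Dugundji space. Because every cross-intersection $G_\alpha(1)\cap G_\beta(2)=A_\alpha\cap B_\beta$ is finite, the two generating families are orthogonal modulo $\fin$, so their clopen traces in $L$ are disjoint; this forces $L_1$ and $L_2$ to meet only in the single ultrafilter $p$ containing the complement of every generator (the all-zero point of each cube). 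Hence $L=L_1\cup L_2$ with $L_1\cap L_2=\{p\}$, exactly the configuration of the Proposition.

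Next I would observe that $L'\in\cde(L)$ means precisely that $L\sub L'$ with $L'$ compact, so $L'$ is a legitimate containing space. Applying the preceding Proposition with $K=L$, $n=2$, $K_1=L_1$, $K_2=L_2$, distinguished point $p$, and ambient space $L'$ then yields an extension operator $E:C(L)\to C(L')$ with $\|E\|\le 2\cdot 2-1=3$, as claimed. The only step demanding real care --- and the one I expect to be the main obstacle --- is verifying that the overlap $L_1\cap L_2$ is a single point and not some larger closed subset; this is exactly where the finiteness of $A_\alpha\cap B_\beta$ enters, ruling out any ultrafilter that meets both families and gluing the two cubes precisely along their distinguished points. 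Since this fact is already established inside the proof of Corollary \ref{AR_union}, the remainder is a direct appeal to the Proposition.
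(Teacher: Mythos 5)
Your proof is correct and follows essentially the same route as the paper: the authors likewise combine the decomposition established in the proof of Corollary \ref{AR_union} (identifying $L$ with two copies of the Dugundji space $2^{\omega_1}$ meeting in a single point, the gluing point being forced by the finiteness of the intersections $G_\alpha(1)\cap G_\beta(2)$) with the preceding Proposition for $n=2$, giving $\|E\|\le 2n-1=3$. Your extra care about the overlap being a single ultrafilter is exactly the point the paper delegates to the ``easy verification'' in Corollary \ref{AR_union}, so there is no gap.
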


Our next example uses the concept of multigaps introduced by Avil\'es and \stevo\  and partially builds on Theorem 29 from \cite{AT11}.
In what follows, we consider ideals $\II$ on $\omega$ containing all finite sets.
Two ideals $\II_1,\II_2$ are {\em orthogonal} if $A_1\cap A_2$ is finite for any $A_i\in\II_i$. Given $k$,
by a {\em $k$-gap} we mean a family $\{ \II_1,\ldots, \II_k\}$ of mutually orthogonal
ideals such that:

\begin{quote}
\em
Whenever $X_1,\ldots, X_k\sub\omega$ and $A\sub^*X_i$, for every $i\le k$ and $A\in\II_i$,
then
$\bigcap_{i\le k} X_i\neq\emptyset$.
\end{quote}

Note that a Hausdorff gap (see Example \ref{d:3}) is, in particular,  a $2$-gap defined by ideals generated by $\omega_1$ sets. Avil\'es and Todorcevic \cite{AT11} proved that for every $k$ there are
$k$-gaps of $\con$-generated ideals; on the other hand, under $\MA(\omega_1)$   there are no $3$-gaps defined by $\omega_1$-generated ideals.

\begin{example}\label{d:4}
There is a dyadic compactum $L$ of weight $\con$ and $L'\in\cde(L)$ such that there is no extension operator
$E:C(L)\to C(L')$.
\end{example}

\begin{proof}
Take a partition $\omega=\bigcup_{k\ge 2} N_k$ into infinite sets. For every $k\ge 2$ divide $N_k$ into infinite sets $P_k$, $Q_{k,j}$, $j\le k$.
Let $\II(k,j)$, $j\le k$ be a family of mutually orthogonal ideals of subsets of $P_k$ that constitutes a $k$-gap.

Fix $k$ and $j\le k$. Choose an independent family $\mathcal{C}(k,j)=\{C_\xi(k,j):\xi<\con\}$ of subsets of $Q_{k,j}$ and  fix some enumeration
$\{I_\xi(k,j):\xi<\con\}$ of $\II_{k,j}$.

We define $\fA$ to be an algebra
of subsets of $\omega$ generated by finite sets and
\[ G_\xi(k,j)=I_\xi(k,j)\cup C_\xi(k,j), \xi<\con, k\ge 2, j\le k.\]
By Lemma \ref{d:2} $\fA/\fin$ is a dyadic algebra (can be embedded into $\clop(2^\con)$), so $L=\ult(\fA/fin)$ is a dyadic compactum
of weight $\le\con$.
We let $L'=L\cup\omega$ which is identified with $\ult(\fA)$.

Suppose that there is an extension operator $E:C(L)\to C(L')$. Take a sequence $(\nu_n)_n$ in $\ba(\fA)$ as in Lemma \ref{d:1}.
Then $\|\nu_n\|\le \|E\|$ for every $n$. Take $k>2\cdot \|E\|$.

Note that for every $j\le k$ and $A\in \II(k,j)$ there is $C_A\in\mathcal{C}(k,j)$ such that $A\cup C_A\in\fA$; moreover,
if $A\in \II(k,j)$ and $A'\in \II(k,j')$ with $j\neq j'$ then $C_A\cap C_{A'}=\emptyset$.
For $A\in \II(k,j)$ put
\[\widehat{A}=\{n\in A: \nu_n(A\cup C_A)>1/2\}.\]

Then $A\sub^*\widehat{A}$ since $\lim_{n\in A} \nu_n(A\cup C_A)=1$.
Hence the set
\[ X_j=\bigcup_{A\in\II(k,j)} \widehat{A},\]
almost contains every $A\in\II(k,j)$.
Since the family $\{\II(k,j):j\le k\}$ constitutes a $k$-gap, there is  $n\in \bigcap_{j\le k}X_j$.
Then there are $A_j\in\II(k,j)$, $j\le k$,  such that $n\in \widehat{A}_j$ so $\nu_n(A_j\cup C_{A_j})>1/2$ and
$A_j\cup C_{A_j}$ are almost pairwise disjoint for different $j$'s. Since $\nu_n$ vanishes on $\fin$, this gives $\|\nu_n\|> k/2> \|E\|$, a contradiction.
\end{proof}

\begin{problem}\label{d:5}
Can we, in ZFC,
define $L$ as in Example \ref{d:4}, but of weight $\omega_1$?
\end{problem}

Correa and Tausk \cite{CT16} proved  that if a compact space $K$ contains a copy of $2^\con$, then $C(K)$ admits a nontrivial twisted sum with $c_0$. Gerlits and Efimov showed that every dyadic compactum $K$ contains a
copy of the Cantor cube $2^\kappa$, for every regular
cardinal number $\kappa\le w(X)$, see \cite[3.12.12]{En89}. From these results easily follows

\begin{theorem}\label{d:6}
Assuming $\CH$, for each nonmetrizable dyadic space $K$, $c_0$ and $C(K)$ have a nontrivial twisted sum.
\end{theorem}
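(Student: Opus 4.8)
The plan is to combine the two external results quoted just before the statement with the earlier machinery of Section~\ref{pr}, reducing everything to a single structural fact about nonmetrizable dyadic compacta. Concretely, I would argue as follows. Fix a nonmetrizable dyadic compactum $K$; then its weight $w(K)$ is some uncountable cardinal $\kappa\ge\omega_1$. Assuming $\CH$, we have $\con=\omega_1$, so in particular $\omega_1$ is a regular cardinal with $\omega_1\le\kappa=w(K)$. By the Gerlits--Efimov theorem cited above (\cite[3.12.12]{En89}), every dyadic compactum contains a topological copy of the Cantor cube $2^\lambda$ for every regular $\lambda\le w(K)$; applying this with $\lambda=\omega_1$ yields a homeomorphic copy of $2^{\omega_1}=2^{\con}$ sitting inside $K$.

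Once we know that $K$ contains a copy of $2^{\con}$, the result of Correa and Tausk quoted immediately before the statement applies verbatim: if a compact space $K$ contains a copy of $2^{\con}$, then $c_0$ and $C(K)$ admit a nontrivial twisted sum. This finishes the argument. So the entire proof is essentially a two-line chain: nonmetrizable dyadic $\Rightarrow$ contains $2^{\omega_1}$ (Gerlits--Efimov, using that $\omega_1$ is regular and $\le w(K)$) $\Rightarrow$ contains $2^{\con}$ (since $\CH$ gives $\con=\omega_1$) $\Rightarrow$ nontrivial twisted sum exists (Correa--Tausk).

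The only genuine content to verify is that the hypotheses of the Gerlits--Efimov result are met. Nonmetrizability of $K$ guarantees $w(K)>\omega$, hence $w(K)\ge\omega_1$, so $\omega_1$ is indeed a regular cardinal not exceeding $w(K)$; this is the single place where the nonmetrizability assumption is used. The role of $\CH$ is purely to identify the cardinal $\omega_1$, at which we can guarantee an embedded cube, with the cardinal $\con$ demanded by the Correa--Tausk theorem. I expect no real obstacle here: both cited theorems are applied as black boxes, and the matching of cardinals is the whole point.

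It is worth noting that this explains why some set-theoretic hypothesis is needed rather than being a defect of the method. Without $\CH$ we can still embed $2^{\omega_1}$ into $K$, but the Correa--Tausk criterion requires a copy of $2^{\con}$, and when $\con>\omega_1$ a dyadic $K$ of weight exactly $\omega_1$ need not contain $2^{\con}$; indeed Corollary~\ref{tts:2} shows that under $\MA+\neg\CH$ the space $2^{\omega_1}$ itself admits only trivial twisted sums with $c_0$. Thus the appeal to $\CH$ in Theorem~\ref{d:6} is essential to the argument and, in view of Corollary~\ref{tts:2}, to the statement itself for the weight-$\omega_1$ case.
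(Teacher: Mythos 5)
Your proof is correct and is precisely the argument the paper intends: it combines the Gerlits--Efimov embedding theorem (applied to the regular cardinal $\omega_1\le w(K)$, which is available because nonmetrizability forces $w(K)\ge\omega_1$) with the Correa--Tausk criterion, using $\CH$ only to identify $2^{\omega_1}$ with $2^{\con}$. The concluding remark about the sharpness of the $\CH$ hypothesis, via Corollary~\ref{tts:2}, is also consistent with the paper's discussion.
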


\section{Linearly ordered compact spaces}\label{lo}

The following theorem is a consequence  of  several known results.

\begin{theorem}\label{nonseparable_LOTS}
Assuming $\CH$,  if $L$ is a nonseparable linearly ordered compact space, then there is a nontrivial twisted sum of $c_0$ and $C(L)$.
\end{theorem}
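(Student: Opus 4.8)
The plan is to reduce the statement, via results already recorded in the paper, to a single application of Theorem \ref{Kubis}(b). Concretely, I would first extract from $L$ (a compact linearly ordered space) a closed subspace $S\sub L$ which is again a compact linearly ordered space, is \emph{nonseparable}, and has weight exactly $\omega_1$. I would then show that such an $S$ carries no strictly positive measure, so that Theorem \ref{Kubis}(b) furnishes a nontrivial twisted sum of $c_0$ and $C(S)$; and finally I would transfer this conclusion back to $L$ through Remark \ref{p:11}, using that a closed subspace of a compact linearly ordered space admits a norm-one extension operator $C(S)\to C(L)$.

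The measure-theoretic core is the observation that \emph{no nonseparable compact linearly ordered space supports a strictly positive measure}. Suppose $\mu$ is a strictly positive probability measure on such a space $K$, and consider the monotone distribution function $h(x)=\mu((\leftarrow,x])$, which maps $K$ into $[0,1]$. Then $h$ is at most two-to-one: if $x<y<z$ all shared a common value, then $\mu((x,y])=\mu((y,z])=0$, and strict positivity forces $(x,y)=(y,z)=\emptyset$, so $\{y\}$ is open and $\mu(\{y\})>0$, contradicting $\mu((x,y])=0$. A monotone, at-most-two-to-one map onto a subset of $[0,1]$ lets one pull a countable dense subset $Q$ of the range back to a countable set $P=h^{-1}(Q)$ which is dense in $K$ (monotonicity keeps each preimage inside the relevant interval), so $K$ is separable. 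Applied to $S$ this yields the hypothesis of Theorem \ref{Kubis}(b), since $S$ is nonseparable of weight $\omega_1$, hence nonmetrizable and without a strictly positive measure. Note that this part is pure $\mathsf{ZFC}$, and in particular it also disposes of the ccc (Suslin-type) case, where nonseparability does not come from a cellular family.

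For the transfer, I would verify the standard fact that a closed subspace $S$ of a compact linearly ordered space $L$ has a norm-one extension operator $E\colon C(S)\to C(L)$. Write $L\sm S$ as a disjoint union of maximal convex open pieces $O$; each two-sided piece has endpoints $a_O,b_O\in S$ (apart from the two end tails). Fix a continuous $\lambda_O$ on $\overline{O}$ with $\lambda_O(a_O)=0$ and $\lambda_O(b_O)=1$, and set $Eg=(1-\lambda_O)\,g(a_O)+\lambda_O\,g(b_O)$ on $O$, $Eg=g$ on $S$, and constant values on the tails; continuity of $Eg$ at points of $S$ that are two-sided limits of gaps follows from the continuity of $g$, and clearly $\|E\|=1$. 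Now, with $S$ nonseparable of weight $\omega_1$, Theorem \ref{Kubis}(b) together with Lemma \ref{p:1} provides $S'\in\cde(S)$ admitting no extension operator $C(S)\to C(S')$, which one may take so that $S'\sm S$ is disjoint from $L$. Applying Remark \ref{p:11} (with $S,S',L$ playing the roles of $L,L',K$) and the operator $E$ then produces a countable discrete extension of $L$ without an extension operator, i.e.\ a nontrivial twisted sum of $c_0$ and $C(L)$.

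The main obstacle I foresee is exactly the first step: passing from an arbitrary nonseparable compact linearly ordered space to a \emph{nonseparable} closed sub-order of weight precisely $\omega_1$. For a compact linearly ordered space the weight is governed by the density together with the number of jumps, so one must control both at once: a left-separated $\omega_1$-sequence keeps the density at $\omega_1$, but the closure of such a sequence can introduce far too many jumps (up to $2^{\omega_1}$ new convex gaps), so a naive closure need not have weight $\omega_1$. A more careful transfinite construction is required, and this is where I expect $\CH$ to enter: using $\con=\omega_1$, one can run an $\omega_1$-recursion enumerating all relevant convex pieces and simultaneously discharge the density and jump requirements, building $S$ with at most $\omega_1$ jumps. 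Thus $\CH$ is used to manufacture the weight-$\omega_1$ witness, while the remaining ingredients (the no-measure lemma and Theorem \ref{Kubis}(b)) are $\mathsf{ZFC}$.
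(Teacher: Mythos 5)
Your overall architecture is coherent and genuinely different from the paper's proof, which splits on the countable chain condition: if $L$ is ccc it is first countable, hence $|L|\le\con=\omega_1$ under $\CH$, so Theorem \ref{Kubis}(b) applies to $L$ itself; if $L$ is not ccc, the paper invokes the Correa--Tausk result that $C(L)$ contains a \emph{complemented} copy of $c_0(\omega_1)$ together with the existence of a nontrivial twisted sum of $c_0$ and $c_0(\omega_1)$ --- no subspace extraction, no extension operator. Your measure-theoretic lemma is correct in substance (it is the fact the paper simply cites from Sapounakis and Mercourakis: every measure on a linearly ordered compactum has separable support), though your one-line density claim has a small slip: $h^{-1}(Q)$ alone need not be dense, since an isolated point $y$ whose immediate successor $s$ is non-isolated satisfies $h(y)=h(s)$ with $h(y)$ possibly a limit of the range from above, so $h(y)$ may avoid $Q$; one must throw in the isolated points, of which there are only countably many because $\mu$ is strictly positive and finite. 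The genuine gap, however, is the step you yourself flag: the extraction of a nonseparable closed suborder $S\sub L$ of weight $\omega_1$ is never proved, and the sketched repair cannot work as stated. Under $\CH$ you only control $\con=\omega_1$, while $L$ may have weight, cardinality, and number of jumps as large as $2^{\omega_1}$, which can exceed $\omega_1$; an ``$\omega_1$-recursion enumerating all relevant convex pieces'' is meaningless without isolating a family of only $\omega_1$ relevant pieces, and you give no such reduction. Since everything funnels through Theorem \ref{Kubis}(b), whose hypothesis is weight $\omega_1$, the proof is incomplete at its load-bearing joint.

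The gap is repairable, and the repair shows where $\CH$ actually enters while recovering the paper's case split. Dichotomy: if $L$ contains a strictly increasing (or decreasing) $\omega_1$-sequence, close it under suprema at limit stages and add the top supremum; the resulting set is closed and homeomorphic to $\omega_1+1$, a nonseparable compact linearly ordered space of weight $\omega_1$ --- note this branch is pure \textsf{ZFC}. If $L$ contains no monotone $\omega_1$-sequence, then every point has countable character from each side (an uncountable one-sided cofinality would produce such a sequence), so $L$ is first countable; then Arhangel'skii's theorem and $\CH$ give $|L|\le 2^\omega=\omega_1$, hence $w(L)\le\omega_1$ and one may take $S=L$. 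This second branch is the paper's ccc computation in disguise: a ccc compact linearly ordered space can have no increasing $\omega_1$-sequence, since the open intervals between even-indexed terms would form an uncountable cellular family. With the dichotomy in place, the rest of your argument --- Theorem \ref{Kubis}(b) producing $S'\in\cde(S)$ with no extension operator, the Heath--Lutzer interpolation operator $C(S)\to C(L)$ (your construction of it is fine), and the transfer via Remark \ref{p:11} --- does go through, and it yields an alternative treatment of the non-ccc case that avoids the Correa--Tausk complementation machinery, at the price of the extraction lemma you left unproved.
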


\begin{proof}
Recall that every measure on a linearly ordered compactum has a separable support, see \cite{Sa80} or \cite{Me96}.
Hence the space $L$ does not support a strictly positive measure.

If $L$ has $ccc$ then it is first-countable (\cite[3.12.4]{En89}), and it follows that $|L|\le\con$ (\cite[3.12.11(d)]{En89}) so
$L$ is of  weight $\omega_1=\con$. Therefore we obtain the desired conclusion by Theorem  \ref{Kubis}.

If $L$ does not satisfy $ccc$ then the assertion of the theorem  follows from a result of Correa and Tausk stated in \cite{CT16}.
Namely, it is well-known that in such a case  $C(L)$ contains an isometric copy of $c_0(\omega_1)$, and by \cite[Corollary 2.7]{CT15}
this copy is complemented in $C(L)$. It remains to recall that there exists a nontrivial twisted sum of $c_0$ and $c_0(\omega_1)$, cf.\ \cite{Ca16}.
\end{proof}

The next result is a far-reaching generalization of Theorem 7.1 from \cite{DP16}.

\begin{theorem}\label{separable_LOTS}
Let $L$ be a separable linearly ordered compact space of weight $\kappa$ such that $2^\kappa>\con$. Then there is a non-tame compactification $\gamma\omega$ with remainder homeomorphic to $L$. Hence there is a nontrivial twisted sum of $c_0$ and $C(L)$.
\end{theorem}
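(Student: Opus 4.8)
The plan is to reduce the statement to the construction of a single badly-behaved extension and then produce it. By Proposition \ref{p:12} it suffices to find some $L'\in\cde(L)$ for which there is no extension operator $E\colon C(L)\to C(L')$: the separability of $L$ is precisely the hypothesis Proposition \ref{p:12} needs, and the second assertion of the theorem then follows at once, since by Lemma \ref{p:1} a non-tame compactification with remainder $L$ is the same thing as a nontrivial twisted sum of $c_0$ and $C(L)$. By Lemma \ref{p:2} the absence of such an operator means exactly that for no $r$ can one find a bounded sequence $(\nu_n)$ in $M_r(L)$ with $\nu_n-\delta_n\to 0$ in the $weak^*$ topology of $M(L')$, where $L'\sm L$ is identified with $\omega$. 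Thus the whole task is to attach the countably many isolated points to $L$ so cleverly that no bounded sequence of measures can track the corresponding point masses.

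First I would extract the order-theoretic input. Since $L$ is separable with $w(L)=\kappa$ and $2^\kappa>\con$, we have $\kappa>\omega$, so $L$ is nonmetrizable; as a separable connected linearly ordered space would be second countable, the uncountable part of the weight is carried by the jumps of the order, and these supply a family $\{C_\xi:\xi<\kappa\}$ of clopen sets that separates points and realizes the weight. Prescribing the attachment of the isolated points then amounts, roughly, to recording for each $n$ on which side of each $C_\xi$ the point $n$ accumulates, and a would-be witnessing sequence must satisfy $\nu_n(C_\xi)-\chi_{C_\xi}(n)\to 0$ for every fixed $\xi$; that is, each $\nu_n$ must asymptotically imitate a point mass placed at the cut determined by $n$.

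The decisive structural fact is that on a linearly ordered compactum every measure has separable support, see \cite{Sa80} or \cite{Me96}; hence for any bounded sequence $(\nu_n)$ the set $S=\ol{\bigcup_n\supp\nu_n}$ is a separable closed subspace of $L$. The strategy, in the spirit of the gap constructions of Examples \ref{d:3} and \ref{d:4}, is to build the attaching configuration as a ``tall gap'' along the chain $\{C_\xi:\xi<\kappa\}$ which cannot be filled from any separable subspace: one arranges the cuts at which the points $n$ are attached to be mutually ``orthogonal'' over every separable $S$, so that matching $\nu_n$ simultaneously to all the required coordinate values would force the total variation $\|\nu_n\|$ to grow without bound, exactly as in the estimate $\|\nu_n\|\ge\nu_n(G_\alpha(1))+\nu_n(G_\beta(2))$ of Example \ref{d:3}. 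Here the separable-support theorem is what confines any candidate witness to a subspace too meagre to fill the gap, while the cardinal inequality $2^\kappa>\con$ is what guarantees that such an unfillable configuration exists among the $\omega$ attaching cuts. Once such an $L'$ is fixed, no bounded approximating sequence exists, so there is no extension operator, and Proposition \ref{p:12} converts $L'$ into the desired non-tame compactification $\gamma\omega$ with remainder homeomorphic to $L$.

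I expect the real difficulty to lie entirely in the third step: producing the ``orthogonal over every separable set'' family of cuts and proving the norm blow-up. A naive counting (bounding the witnessing sequences by $\con$) fails, because separably supported measures on $L$ are themselves as numerous as $2^\kappa$; what is needed instead is a genuinely combinatorial gap, tailored to the order of $L$, that is unfillable by any countable family of measures with separable support. Pinning down this object and verifying that $2^\kappa>\con$ is exactly the hypothesis under which it can be constructed is the heart of the matter; by comparison, the reduction via Propositions \ref{p:12} and Lemma \ref{p:2} and the realization of the weight by jumps are routine.
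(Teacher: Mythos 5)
Your reduction (Proposition \ref{p:12} plus Lemmas \ref{p:1} and \ref{p:2}) and the extraction of the jump structure are sound, but the step you defer --- ``producing the orthogonal-over-every-separable-set family of cuts and proving the norm blow-up'' --- is not a technical verification you have postponed; it is the entire theorem, and there is concrete evidence in the paper that it cannot be done in the form you propose. Gap configurations of the type in Examples \ref{d:3} and \ref{d:4} do not kill \emph{all} extension operators from a single countable discrete extension: a Hausdorff gap only rules out operators of norm $<2$ (Example \ref{d:3}, and indeed an operator of norm $3$ exists for that very pair), and to rule out norm $\le k/2$ one needs a $k$-gap, which by \cite{AT11} is available in ZFC only for $\con$-generated ideals --- this is why Example \ref{d:4} has weight $\con$, why Problem \ref{d:5} (the weight-$\omega_1$ version of exactly your plan) is posed as open, and why Section 10 states that the authors' attempts at a single-extension construction ``failed for some combinatorial reasons'': one configuration kills operators of small norm, but killing all operators required a different mechanism. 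Moreover, $2^\kappa>\con$ is a purely counting hypothesis, and your sketch offers no mechanism by which it would enter the construction of one combinatorial gap.

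The mechanism the paper actually uses is the counting argument you dismiss, and your reason for dismissing it is false. Separably supported measures on a separable linearly ordered compactum are \emph{not} $2^\kappa$-many: every measure on a linearly ordered compactum has separable support (\cite{Sa80}, \cite{Me96}), and by Mercourakis \cite{Me96} such measures admit uniformly distributed sequences, so for instance $|M(I_B)|=\con$ --- this cardinality bound is precisely the engine of Theorem \ref{I_B}. The paper's proof runs as follows: by Lemma \ref{LOTS_subspace}, $L$ contains a copy of $I_B$ with $B\sub(0,1)$ dense and $|B|=\kappa$; Theorem \ref{I_B} then builds not one but $2^\kappa$ many compactifications $\ult(\cA^f)$, $f\in 2^B$, of a countable set $Q$, each with remainder $I_B$, where $f(x)$ decides whether the attached sequence $(q_x^n)_n$ converges to $(x,0)$ or to $(x,1)$. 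If every one of these were tame, Lemma \ref{p:2} would produce maps $e^f:Q\to M(I_B)$ with $e^f(q_x^n)\to\delta_{(x,f(x))}$, making $f\mapsto e^f$ injective into a set of cardinality $\con$ and contradicting $2^\kappa>\con$; so \emph{some} $f$ works, with no control over which. Finally, the Heath--Lutzer theorem \cite{HL74} supplies an extension operator $C(I_B)\to C(L)$ for the closed subspace $I_B\sub L$, and Remark \ref{p:11} together with Proposition \ref{p:12} transfers the bad extension to $L$ (a transfer your sketch also omits, though it becomes necessary once one works inside a subspace). The missing idea in your proposal is thus the pigeonhole over a $2^\kappa$-sized family of attachments in place of a single unfillable gap; as written, your single-$L'$ plan cannot be completed by the stated means.
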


\begin{corollary}\label{separable_LOTS_c}
If $L$ is a separable linearly ordered compact space of weight $\con$, then there is a nontrivial twisted sum of $c_0$ and $C(L)$.
\end{corollary}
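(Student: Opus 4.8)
The plan is to derive this directly from Theorem \ref{separable_LOTS} by specializing the weight parameter. A separable linearly ordered compact space $L$ of weight $\con$ is exactly an instance covered by that theorem with $\kappa=\con$, \emph{provided} the cardinal inequality $2^\kappa>\con$ holds in this case. So the whole task reduces to verifying that this inequality is automatic when $\kappa=\con$.

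The only point to check is therefore that $2^\con>\con$. This is immediate from Cantor's theorem, which gives $2^\kappa>\kappa$ for every cardinal $\kappa$; taking $\kappa=\con$ yields $2^\con>\con$ unconditionally in ZFC, with no appeal to $\CH$ or its negation. Hence the hypothesis $2^\kappa>\con$ of Theorem \ref{separable_LOTS} is satisfied for $\kappa=\con$ regardless of the ambient set theory.

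Consequently, Theorem \ref{separable_LOTS} applies to $L$ and produces a non-tame compactification $\gamma\omega$ whose remainder is homeomorphic to $L$; by Lemma \ref{p:1} the associated short exact sequence is then a nontrivial twisted sum of $c_0$ and $C(L)$, as required. I do not expect any genuine obstacle here: the entire mathematical content is already packaged in Theorem \ref{separable_LOTS}, and this corollary is merely the observation that its cardinal side-condition $2^\kappa>\con$ degenerates to a triviality precisely at the value $\kappa=\con$.
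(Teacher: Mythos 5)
Your proposal is correct and coincides with the paper's intended derivation: Corollary \ref{separable_LOTS_c} is exactly Theorem \ref{separable_LOTS} specialized to $\kappa=\con$, where the hypothesis $2^\kappa>\con$ holds unconditionally by Cantor's theorem. Nothing further is needed, since the theorem itself already asserts the nontrivial twisted sum (your extra appeal to Lemma \ref{p:1} is harmless but redundant).
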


\begin{corollary}\label{LOTS_CH}
Under  $\CH$,  if $K$ is a nonmetrizable linearly ordered compact space, then there is a nontrivial twisted sum of $c_0$ and $C(K)$.
\end{corollary}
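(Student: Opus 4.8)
The plan is to deduce Corollary~\ref{LOTS_CH} as a straightforward consequence of the two preceding results, Theorem~\ref{nonseparable_LOTS} and Corollary~\ref{separable_LOTS_c}, by splitting the argument according to whether the linearly ordered compact space $K$ is separable or not. Under $\CH$ we have $\con=\omega_1$, so this dichotomy covers all cases of interest, and in each case one of the two cited results will apply directly.

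First I would treat the nonseparable case. If $K$ is not separable, then Theorem~\ref{nonseparable_LOTS} applies verbatim: under $\CH$, any nonseparable linearly ordered compact space carries a nontrivial twisted sum of $c_0$ and $C(K)$. So nothing further is needed here beyond invoking that theorem.

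Next I would treat the separable case, and this is where the weight computation enters. Suppose $K$ is separable. Since $K$ is assumed nonmetrizable, its weight is uncountable, hence at least $\omega_1$. On the other hand, a separable linearly ordered compact space has weight at most $\con$ (the order topology of a separable LOTS is determined by countably many cuts up to the relevant density considerations, giving weight $\le\con$). Invoking $\CH$, we get $\omega_1\le w(K)\le\con=\omega_1$, so $w(K)=\omega_1=\con$. Thus $K$ is a separable linearly ordered compact space of weight exactly $\con$, and Corollary~\ref{separable_LOTS_c} yields a nontrivial twisted sum of $c_0$ and $C(K)$.

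The only genuine point requiring care—and the step I would flag as the main obstacle—is justifying that a separable linearly ordered compactum has weight at most $\con$, so that $\CH$ forces weight $=\omega_1$ and places us squarely in the hypothesis of Corollary~\ref{separable_LOTS_c}. This is a standard fact about the order topology (a separable LOTS has a countable dense set, and its weight is controlled by the number of jumps and of left/right character of points, each bounded by $\con$), but it is the one place where the separability must be converted into a usable cardinal bound rather than merely cited. Once this bound is in hand, the corollary is immediate from the clean case split: nonseparable $K$ is handled by Theorem~\ref{nonseparable_LOTS}, and separable $K$ has weight $\con$ under $\CH$ and is handled by Corollary~\ref{separable_LOTS_c}.
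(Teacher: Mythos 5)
Your proof is correct and follows exactly the paper's route: the authors state that Corollary \ref{LOTS_CH} ``follows directly from Corollary \ref{separable_LOTS_c} and Theorem \ref{nonseparable_LOTS},'' i.e.\ the same separable/nonseparable case split you use. The weight computation you flag is the standard fact $w(X)\le 2^{d(X)}$ for regular (hence compact Hausdorff) spaces, so a separable nonmetrizable $K$ has $\omega_1\le w(K)\le\con=\omega_1$ under $\CH$, just as you argue.
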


Note that Corollary \ref{LOTS_CH} follows directly   from Corollary \ref{separable_LOTS_c}  and Theorem \ref{nonseparable_LOTS}.
The rest of this section is devoted to proving Theorem \ref{separable_LOTS}.

We shall use the following well-known description of the class of separable linearly ordered compact spaces.
Let $A$ be an arbitrary subset of a closed subset $K$ of the unit
interval $I=[0,1]$. Put
\[ K_{A}=(K\times \{0\})\cup (A\times\{1\}),\]
and equip this set with the order topology given by the
lexicographical order (i.e., $(s,i)\prec(t,j)$ if either $s<t$, or
$s=t$ and $i<j$).

For $K=I$ and $A=(0,1)$ the space $\mathbb{K}=K_A$ is a well known double arrow space (some authors use this name for the space $I_I$).

It is known that the class of all spaces $K_A$
coincides with the class of separable linearly ordered compact
spaces. Namely, the following is a reformulation of the characterization due to Ostaszewski \cite{Os74}:

\begin{theorem}[Ostaszewski]\label{Ostaszewski} The space $L$ is a separable compact linearly
ordered space if and only if $L$ is homeomorphic to $K_A$ for some
closed set $K\subseteq I$ and a subset $A\subseteq K$.
\end{theorem}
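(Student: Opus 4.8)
The plan is to prove both implications, relying throughout on two elementary facts: an order isomorphism between linearly ordered compact spaces is automatically a homeomorphism (the order topology is determined by the order alone), and a continuous bijection from a compact space onto a Hausdorff space is a homeomorphism. The heart of the matter is to reduce the direction $(\Rightarrow)$ to producing a \emph{continuous order embedding} of $L$ into the split interval $I_I$ (that is, $K_A$ with $K=A=I$). Once $L$ is realized as a closed sub-LOTS $F\subseteq I_I$, one recovers the required form by setting $K=\pi(F)$, where $\pi\colon I_I\to[0,1]$ is the first-coordinate projection, and $A=\{t\in K:|F\cap\pi^{-1}(t)|=2\}$. Since every fibre $F\cap\pi^{-1}(t)$ has one or two points, the obvious bijection $K_A\to F$ (sending the single copy of each $t\notin A$ to whichever point of $F$ lies over $t$) is an order isomorphism, hence a homeomorphism, so $L\cong F\cong K_A$.

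For the direction $(\Leftarrow)$ I would verify directly that each $K_A$ is separable, compact and linearly ordered. The lexicographic order is total, and it is Dedekind complete with least and greatest elements: given $S\subseteq K_A$, the real $s^{*}=\sup\{u:(u,i)\in S\}$ exists in the compact set $K$, and a short case analysis (according to whether $s^{*}\in A$ and whether $S$ approaches $s^{*}$ through points of the second copy) shows that $\sup S$ exists in $K_A$. A Dedekind complete linear order with endpoints is compact in its order topology. For separability, recall that the complement of $K$ in $I$ is a countable union of open intervals, so $K$ has only countably many pairs of consecutive points; a countable dense $D\subseteq K$ together with the (countably many) endpoints of the complementary intervals, taken in both copies where available, is then dense in $K_A$, since every nonempty order interval either contains some $(d,0)$ with $d\in D$ or is trapped between the two copies of such a gap.

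The real work is the construction in $(\Rightarrow)$. Writing $J$ for the set of jumps (consecutive pairs) of $L$, I would build a continuous nondecreasing surjection $\psi\colon L\to K\subseteq[0,1]$ whose only nondegenerate fibres are jump pairs, and then put $\theta(x)=0$ (resp.\ $1$) on the smaller (resp.\ larger) point of each two-point fibre; then $e=(\psi,\theta)\colon L\to I_I$ is the desired continuous order embedding, its continuity following from that of $\psi$ because the two points of such a fibre are isolated from the outside on the relevant side. To obtain $\psi$ I would collapse a carefully chosen set $J_0\subseteq J$ of \emph{pairwise non-adjacent} jumps: inside each maximal run of consecutive points I match the jumps into disjoint pairs, placing these in $J_0$ (they become the doubled points forming $A$) and leaving the in-between jumps to become genuine gaps of $K$. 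Since $J_0$ is a non-adjacent matching, every class of the quotient $q\colon L\to L/J_0$ has at most two points, and I would take $\psi=\iota\circ q$, where $\iota$ is an order embedding of $L/J_0$ into $[0,1]$.

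The main obstacle is to justify that $\iota$ exists, i.e.\ that $L/J_0$ is metrizable, and to control the combinatorics of the runs. Here separability does the work: every interior point of a run of length $\ge 3$ is an isolated point of $L$, and a separable space has only countably many isolated points, so there are only countably many runs that are not single jumps; consequently the leftover jumps $J\setminus J_0$, and the resulting gaps of $K$, are countable. A separable compact LOTS with only countably many jumps is second countable, hence metrizable, so $L/J_0$ does embed into $[0,1]$ as a closed set, completing the construction. I expect the delicate bookkeeping of the runs (matching jumps while keeping $J\setminus J_0$ countable, and handling infinite runs of type $\omega$, $\omega^{*}$ and $\mathbb{Z}$), together with the implication ``separable $+$ countably many jumps $\Rightarrow$ second countable'', to be the steps requiring the most care.
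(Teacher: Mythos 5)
You have identified the right overall skeleton (collapse a suitable set of jumps by a monotone map $\psi$, read off $K$ and $A$ from its fibres, use that an order isomorphism is a homeomorphism), but the step you call the heart of the matter is not repairable as stated: there is in general \emph{no} continuous order embedding of $L$ into $I_I$. The space $I_I$ is totally disconnected, since between any two of its points there lies a jump $(u,0)<(u,1)$ whose two halves split $I_I$ into clopen pieces; hence every subspace of $I_I$ is totally disconnected. But a separable compact linearly ordered space may be connected: $L=[0,1]$ itself is of the form $K_A$ (with $K=I$, $A=\emptyset$) and must be covered by the theorem. Your construction fails for it concretely: $[0,1]$ has no jumps, so $J_0=\emptyset$, $\psi=\mathrm{id}$, and $e(x)=(x,0)$; for $a<1$ and $x\downarrow a$ one has $e(x)\to(a,1)\neq(a,0)=e(a)$ in $I_I$, so $e$ is discontinuous at every point that is a limit from the right --- and at a two-sided limit point neither value of $\theta$ can work. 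Thus the claim that continuity of $e$ ``follows from that of $\psi$'' is false, and the detour through a closed subspace $F\subseteq I_I$ proves at best the totally disconnected case. (For comparison: the paper does not prove this theorem at all; it cites Ostaszewski's original article, so the benchmark is the standard argument.)

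The fix is to delete the detour: from your $\psi$ put $K=\psi(L)$ (compact, hence closed in $I$) and $A=\{t\in K:|\psi^{-1}(t)|=2\}$, and check directly that sending $x$ to $(\psi(x),0)$ or $(\psi(x),1)$ according to whether $x$ is the bottom or top of its fibre is an order \emph{bijection} $L\to K_A$; since both spaces carry their order topologies, this is automatically a homeomorphism. The subtlety you stumbled on then evaporates, because $K_A$ is equipped with its own order topology rather than the subspace topology from $I_I$ (these genuinely differ: $[0,1]\times\{0\}$ is connected in its order topology but totally disconnected, and not closed, as a subspace of $I_I$). Two further points deserve care. First, your appeal to metrizability of $L/J_0$ only yields, via Cantor's theorem, an \emph{order} embedding into $[0,1]$, which need not be continuous nor have closed image (the set $[0,1)\cup\{2\}$ is compact in its order topology yet not closed in $\mathbb{R}$); you need a continuous order embedding, e.g.\ $\iota(y)=\sum_{d_n\le y}2^{-n}$ where $(d_n)$ enumerates a countable dense set together with all (countably many) jump endpoints of $L/J_0$. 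Second, once this formula is available, the whole run-matching bookkeeping is unnecessary: applying $\psi(x)=\sum_{d_n\le x}2^{-n}$ directly to $L$ with $(d_n)$ a countable dense sequence gives a continuous monotone map whose fibres have at most two points (a three-point fibre would contain a nonempty open interval, hence some $d_n$, forcing a positive increment), and the fibre analysis above yields $L\cong K_A$ in one step --- which is essentially the classical proof.
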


The next lemma seems to belong to the mathematical folklore, we include a short justification for the readers convenience.

\begin{lemma}\label{LOTS_subspace}
Let $L$ be a separable linearly ordered compact space of uncountable weight $\kappa$. Then $L$ contains a topological copy of the space $I_B$, where $B$ is a dense subset of $(0,1)$ of the cardinality  $\kappa$.
\end{lemma}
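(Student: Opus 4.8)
The plan is to use the representation $L\cong K_A$ granted by Theorem~\ref{Ostaszewski} (with $K\sub I=[0,1]$ closed and $A\sub K$) and to locate the required copy of $I_B$ inside a carefully chosen closed subspace of $K_A$. First I would record that $w(K_A)=|A|+\aleph_0$: the doubled points $a\in A$ give $|A|$ pairwise distinct clopen final segments $\{x:x\succeq(a,1)\}$, while a base of that size is easily written down from a countable base of $K$. Hence $w(L)=\kappa>\aleph_0$ forces $|A|=\kappa$. Next, let $P\sub K$ be the set of condensation points of $A$; then $P$ is perfect, $A\sm P$ is countable, and $A'=A\cap P$ has cardinality $\kappa$, is dense in $P$, and consists of condensation points of $A'$. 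After passing to $[\min P,\max P]$ and rescaling, I assume $\min P=0$, $\max P=1$.

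I would then work with the subspace $T=(P\times\{0\})\cup(A'\times\{1\})$ of $K_A$. Since a point $(t,1)$ of $K_A$ exists only for $t\in A$, and a net in $T$ whose first coordinates converge to $t$ forces $t\in P$ (so $t\in A\cap P=A'$), one checks $\overline T=T$; thus $T$ is compact. For a closed subset of a compact LOTS the order topology and the subspace topology coincide: the order topology is always contained in the subspace topology (the subbasic order-rays of $T$ are restrictions of order-rays of $K_A$), and the identity from the compact subspace topology to the Hausdorff order topology is then a homeomorphism. Hence $T$ is homeomorphic, as a LOTS, to $P_{A'}$.

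The space $P_{A'}$ need not be perfect: a point $(c,1)$ (resp. $(d,0)$) is isolated exactly when $c\in A'$ is the left endpoint (resp. $d\in A'$ the right endpoint) of a gap of $P$, and there are only countably many such points. I would therefore pass to the closed, hence compact, first Cantor--Bendixson derivative $R=P_{A'}\sm I_0$, where $I_0$ is this countable set; removing $I_0$ creates no new isolated points, so $R$ is perfect. The crucial gain is that every jump of $R$ is now \emph{clean}: its lower element is approached from below and its upper element from above, and no two jumps are adjacent. The jumps of $R$ are dense, since $A'$ is dense in $P$ and each $a\in A'$ off the countably many gap endpoints yields the surviving jump $(a,0)\prec(a,1)$; moreover the set $J$ of jumps satisfies $|J|=\kappa$, as every jump comes from a doubled point ($\le\kappa$) or a gap of $P$ ($\le\aleph_0$).

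Finally I would collapse the jumps. Identifying the two endpoints of each jump of $R$ and fixing the rest produces a quotient order that is complete, separable, densely ordered and jump-free (cleanliness guarantees no new jumps appear), hence order-isomorphic to $[0,1]$; let $\rho\colon R\to[0,1]$ be the corresponding nondecreasing continuous surjection and $B=\rho(J)$. Then $B$ is dense, $|B|=\kappa$, and the map sending a non-jump point $x$ to $(\rho(x),0)$ and the endpoints of a jump to $(\rho(x),0)\prec(\rho(x),1)$ is an order-isomorphism, hence a homeomorphism, of $R$ onto $I_B$. The extremal points of $R$ are not parts of jumps (the isolated endpoint copies, if present, were deleted in forming $R$), so in fact $B\sub(0,1)$, and $I_B\cong R\sub T\sub L$ as required. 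I expect the main obstacle to be precisely the bookkeeping around the gaps of $P$: one must identify exactly which points of $P_{A'}$ are isolated and verify that discarding them leaves a perfect space with only clean jumps, since this is what makes the collapsing map a homeomorphism onto a genuine split interval $I_B$ rather than onto some $K'_{A''}$ with a nondegenerate base.
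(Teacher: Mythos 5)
Your proposal is correct and takes essentially the same route as the paper's proof: Ostaszewski's representation $L\cong K_A$, extraction of a perfect core carrying $\kappa$ doubled points (your condensation points $P$ and $A'$ play the role of the paper's dense-in-itself set $C\subseteq A$ and its closure $M$), deletion of the countably many isolated points created by gap endpoints, and collapsing of the jumps/gaps to obtain $I_B$. The one step you assert without proof --- separability of the collapsed order, needed to identify it with $[0,1]$ --- is exactly what the paper secures by performing the collapse on the metrizable base set $M$ (yielding a compact, connected, metrizable linearly ordered quotient), and your quotient of $R$ coincides with that metrizable quotient of $P$, so the gap is routine to fill.
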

\begin{proof}
By Theorem \ref{Ostaszewski} we can assume that $L=K_A$ for some closed $K\subseteq I$ and some subset $A$ of $K$. From our assumption on the weight of $K$ it easily follows that $|A|=\kappa$. Take a dense-in-itself subset $C$ of $A$ of cardinality $\kappa$. Let $M$ be the closure of $C$ in $K$ and let $a=\inf M, b=\sup M$. Put $D= M\cap A\cap(a,b)$. Obviously, $D$ is a dense subset of $M$ of the cardinality $\kappa$ and the space $M_D$ can be identified with a subspace of $K_A$. Let $\{(a_n,b_n): n<m\}$ be an enumeration of the family of all components of $[a,b]\setminus M$ for some $m\le\omega$. Put
\[ P=M_D\setminus\left(\{(a_n,1): a_n\in D\}\cup\{(b_n,0): b_n\in D\}\right).\]
Then $P$ is a closed, dense-in-itself subspace of $M_D$. Let $\sim$ be the equivalence relation on $M$ defined by $a_n\sim b_n$ for $n<m$, and let $q:M\to M_{/\sim}$ be the quotient map. The space $S= M_{/\sim}$ is compact, linearly ordered, connected, and metrizable, hence there is a  homeomorphism $h: M_{/\sim}\to I$ with $h(q(a))=0$. One can easily verify that $P$ can be identified with $I_B$ where $B=h(q(D\cup\{a_n: n<m\}))$.
\end{proof}

\begin{theorem}\label{I_B}
Let $B$ be a dense subset of $(0,1)$ of the cardinality  $\kappa$ such that $2^\kappa>\con$. Then there is a non-tame compactification $\gamma\omega$ which remainder is homeomorphic to $I_B$.
\end{theorem}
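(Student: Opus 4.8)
The plan is to reduce the statement, via Proposition \ref{p:12}, to producing a \emph{single} countable discrete extension $L'\in\cde(I_B)$ carrying no extension operator $C(I_B)\to C(L')$; Proposition \ref{p:12} then manufactures from it a non-tame compactification of $\omega$ whose remainder is homeomorphic to $I_B$. I would work throughout with the standard description of $I_B$ as the Stone space of the interval algebra of the chain $B$: for $b\in B$ the set $U_b=\{z\in I_B:z\ge (b,1)\}$ is clopen, the decreasing family $\{U_b:b\in B\}$ generates $\clop(I_B)$, and the points of $I_B$ are exactly the cuts of $B$ (each $b\in B$ being doubled into $(b,0),(b,1)$). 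By Lemma \ref{p:2}, the existence of an extension operator of norm $\le r$ for $L'$ is equivalent to the existence of a sequence $(\nu_n)$ in $M(I_B)$ with $\|\nu_n\|\le r$ and $\nu_n-\delta_n\to 0$ in the weak$^*$ topology of $M(L')$. So the task becomes: attach the isolated points $\omega$ to $I_B$ so cleverly that no bounded sequence of measures on $I_B$ can accompany them.

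The engine of the whole argument is that any such candidate sequence is coded by a single real. Indeed, a finitely additive signed measure $\nu$ on $\clop(I_B)=\la U_b:b\in B\ra$ is determined by its chain profile $g_\nu(b)=\nu(U_b)$; since the difference sets $U_b\sm U_{b'}$ (for $b<b'$) are disjoint, $g_\nu$ has total variation $\le\|\nu\|$ along the chain, hence extends to a function of bounded variation on $[0,1]$. A BV function is determined by its (countably many) jumps together with its values on a countable dense set, i.e.\ by a real, and $\nu$ in turn extends uniquely to a Radon measure on $I_B$. Thus a \emph{bounded} sequence $(\nu_n)$ is coded by a single element of $\er$, and the family of all possible witnessing sequences injects into $\er$, a set of size $\con$. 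This is precisely where the cardinal hypothesis $2^\kappa>\con$ will bite.

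Accordingly I would not build one extension but a family $\{L'_E:E\sub B\}$ of countable discrete extensions of $I_B$, indexed by the $2^\kappa$ subsets of $B$, and arrange the attaching of $\omega$ so that a witnessing sequence for $L'_E$ \emph{determines} $E$ (a bounded sequence can witness an extension operator for at most one $L'_E$). Granting this, if $L'_E$ were tame I could pick a witness $(\nu_n)$ and pass to its real code; distinct tame $E$ would then receive distinct codes, giving an injection of $\{E:L'_E\text{ is tame}\}$ into $\er$. Consequently at most $\con$ of the $L'_E$ can be tame, so since $|2^B|=2^\kappa>\con$ some $L'_E$ admits no extension operator, and Proposition \ref{p:12} finishes the proof. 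Note that the hypothesis enters exactly as ``there are more subsets of $B$ than reals, so some subset is not coded by any real''; it is strictly stronger than merely $\kappa>\omega$, as it must be, since for $\kappa=\omega_1$ with $2^{\omega_1}=\con$ no such conclusion is available. I would also record the guiding observation that the witnessing $L'$ must be non-zerodimensional: if $L'=\ult(\fA)$ for $\fin\sub\fA\sub P(\omega)$ with $\fA/\fin\cong\clop(I_B)$, then each $n$ determines a cut of $B$, hence a genuine point $x_n\in I_B$, and one checks that $\nu_n=\delta_{x_n}$ satisfies $\nu_n-\delta_n\to0$; so the naive interval-algebra compactification is tame and the obstruction must be carried by continuous, non-locally-constant functions on $L'$.

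I expect the main obstacle to be the construction of the family $\{L'_E\}$ together with the proof that a witnessing sequence recovers $E$. One must attach only countably many isolated points yet code, through the topology of $L'_E$, a requirement whose fulfilment forces the (real-coded, hence BV and countably-determined) chain profiles of the $\nu_n$ to reconstruct the arbitrary subset $E$ of $B$; simultaneously one must verify that each $L'_E$ is a legitimate element of $\cde(I_B)$. Making the topology rich enough to pin $E$ down from a single real, while keeping the remainder exactly $I_B$, is the delicate point; the bounded-variation bookkeeping that caps the information content of every candidate sequence at $\con$ is what turns the set-theoretic assumption $2^\kappa>\con$ into the decisive ingredient.
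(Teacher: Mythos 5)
Your counting strategy is exactly the paper's: index countable discrete extensions by the $2^\kappa$ functions $f\in 2^B$ (your subsets $E\sub B$), show via Lemma \ref{p:2} that an extension operator for the $f$-th extension yields a map from a fixed countable set into $M(I_B)$ which determines $f$, and then use $|M(I_B)|=\con$ together with $2^\kappa>\con$ to conclude that some extension admits no extension operator (your bounded-variation coding of chain profiles is a correct alternative to the paper's appeal to uniformly distributed sequences for the bound $|M(I_B)|\le\con$; your final reduction through Proposition \ref{p:12} is also legitimate, since $I_B$ is separable). But what you submit is a plan, not a proof: the family $\{L'_E: E\sub B\}$, the verification that each remainder is $I_B$, and the recovery of $E$ from a witnessing sequence are all deferred as ``the delicate point'', and this is precisely where the mathematical content lies. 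For the record, the paper's construction is: fix a countable dense $Q\sub (0,1)$; for $x\in B$ put $P_x=\{q\in Q: q\le x\}$ and choose a strictly increasing sequence $q_x^n\nearrow x$ in $Q$ with $S_x=\{q_x^n: n\in\omega\}$; for $f\in 2^B$ set $R_x^f=P_x$ if $f(x)=0$ and $R_x^f=P_x\sm S_x$ if $f(x)=1$, and let $\cA^f=\la \{R_x^f: x\in B\}\cup\fin\ra\sub P(Q)$. Since $P_x\cap S_y$ is finite for $x<y$, one has $R_x^f\subsetneq^* R_y^f\Leftrightarrow x<y$ for \emph{every} $f$, so the remainder $\ult(\cA^f/\fin)$ is homeomorphic to $I_B$ independently of $f$, while the principal ultrafilters $u^f_{q_x^n}$ converge in $\ult(\cA^f)$ to the point corresponding to $(x,f(x))$ --- and this convergence is what forces any witnessing sequence of measures to reconstruct $f$, giving the injection $f\mapsto e^f$ into a set of size $\con$.

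Moreover, one of your guiding observations is false and points away from the construction that works. You claim the witnessing $L'$ must be non-zerodimensional, arguing that if $L'=\ult(\fA)$ with $\fin\sub\fA\sub P(\omega)$ and $\fA/\fin\cong\clop(I_B)$, then each $n$ determines a cut of $B$ and $\nu_n=\delta_{x_n}$ witnesses tameness. The paper's non-tame extensions are Stone spaces of \emph{exactly} this form. Your sub-argument breaks because the isomorphism $\fA/\fin\cong\clop(I_B)$ only provides generators forming a chain \emph{modulo finite sets}, not actual inclusions: in $\cA^f$ one has $R_x^f\sm R_y^f\sub P_x\cap S_y$, finite but typically nonempty for $x<y$, and for a fixed $q\in Q$ the set $\{x\in B: q\in R_x^f\}$ equals the final segment $\{x\in B: x\ge q\}$ with the points $\{x: f(x)=1,\ q\in S_x\}$ deleted --- not a cut. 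Even where a natural cut assignment exists it fails to witness tameness: if $f(x)=0$ then $S_x\sub R_x^f$, so the locally constant function $h=\chi_{[R_x^f]}$ satisfies $h(u^f_{q_x^n})=1$ for all $n$, whereas the cut point determined by $q_x^n$ lies below $(x,1)$, so $\delta_{x_{q_x^n}}$ gives $h$ the value $0$; hence $\nu_n-\delta_n\not\to 0$. The finite perturbations $S_x$, invisible in $\fA/\fin$, are precisely the coding mechanism, and the obstruction is carried by locally constant functions after all. So the gap is twofold: the essential construction is missing, and the heuristic you propose to guide its discovery would exclude the one the theorem actually rests on.
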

\begin{proof}
Let $Q$ be a countable dense subset of $(0,1)$. For each $x\in B$ put $P_x=\{q\in Q: q\le x\}$ and pick a strictly increasing sequence $(q_x^n)_{n\in\omega}$ in $Q$ such that $\lim_{n}q_x^n = x$. Let $S_x=\{q_x^n: n\in\omega\}$. For any $f: B\to 2$ define
\begin{equation*}
R_x^f =\begin{cases} P_x& \text{if $f(x)=0$},\\ P_x\setminus S_x & \text{if $f(x)=1$}.
\end{cases}
\end{equation*}
Let $\mathcal{A}^f$ be a subalgebra of $P(Q)$ generated by $\{R_x^f: x\in B\}\cup\fin$, where $\fin$ denotes the family of all finite subsets of $Q$. We shall check that, for any $f$, the Stone space $\ult(\mathcal{A}^f)$ is a compactification of a countable discrete space with remainder homeomorphic to $I_B$.

For any $q\in Q$ let $u_q^f$ denote the ultrafilter in $\ult(\mathcal{A}^f)$ containing $\{q\}$. Let $r^f: \mathcal{A}^f \to \mathcal{A}^f/\fin$ be the quotient map. It is well-known that $\ult(\mathcal{A}^f)$ is a compactification of its countable discrete subspace $\{u_q^f: q\in Q\}$ and its remainder can be identified with the space $\ult(\mathcal{A}^f/\fin)$. Observe that $\mathcal{A}^f/\fin$ is generated by the family  $\{r^f(R_x^f): x\in B\}$. For $x,y\in B$, $x<y$ we have $P_x\subseteq P_y$, the difference $P_y\setminus P_x$ is infinite, and the intersection $P_x\cap S_y$ is finite. Therefore
\[
R^f_x\subsetneq^* R^f_y \Leftrightarrow x<y
\mbox{ for } x,y\in B \mbox{  and } f\in 2^B.\]

Let $U$ be an ultrafilter in ${\cA}^f/\fin$. The set $T_U=\{x\in B: r(R_x^f)\in U\}$ is a final segment in $(B,<)$, hence, either
\begin{eqnarray*}
(\exists z\in I\setminus B) \quad T_U &=& (z,1)\cap B \quad\text{or}\\
(\exists y\in B) \quad T_U &=& [y,1)\cap B \quad\text{or}\\
(\exists y\in B) \quad T_U &=& (y,1)\cap B\,.
\end{eqnarray*}
The ultrafilter $U$ is uniquely determined by the set $T_U$. For $z\in I\setminus B$ let $U_z^f$ be the ultrafilter in $\mathcal{A}^f/\fin$ such that $T_{U_z^f} =(z,1)\cap B$.
For $y\in B$ let $U_{y,0}^f$, $U_{y,1}^f$ be the ultrafilters such that $T_{U_{y,0}^f}=[z,1)\cap B$, $T_{U_{y,1}^f}=(z,1)\cap B$.

A routine verification shows that the map $\varphi^f: \ult(\mathcal{A}^f/\fin) \to I_B$ given by
\begin{equation*}
\varphi^f(U) = \begin{cases} (z,0)& \text{if\quad $U = U_z^f,\quad z\in I\setminus B$},\\
(y,i)& \text{if\quad $U = U_{y,i}^f,\quad y\in B, i=0,1$}
\end{cases}
\end{equation*}
is a homeomorphism.
The map $\psi^f: \ult(\mathcal{A}^f/\fin) \to \ult(\mathcal{A}^f)$, given by $\psi^f(U) = (r^f)^{-1}(U)$, for $U \in  \ult(\mathcal{A}^f/\fin)$ is a homeomorphic embedding. Let
\[ u_z^f = \psi^f(U_z^f),  u_{y,i}^f = \psi^f(U_{y,i}) \mbox{  for  } z\in I\setminus B, y\in B, i=0,1.\]
 Observe that if $f(x)=0$, then $S_x\subseteq R_x^f$, otherwise  $S_x\subseteq Q\setminus R_x^f$, hence $S_x$ is contained in the element of the ultrafilter $u_{y,f(x)}^f$. It follows that the sequence $(u^f_{q_x^n})_{n\in\omega}$ converges to $u_{y,f(x)}^f$ in  $\ult(\mathcal{A}^f)$.
The space $M(I_B)$ has the cardinality $\con$, which follows for instance from the fact that every probability measure
on $I_B$ has a uniformly distributed sequence, see Mercourakis \cite{Me96}.
Hence the family $\mathcal{E}$ of all maps $e:Q\to M(I_B)$ has the same cardinality.

Suppose that for all $f\in 2^B$ there is an extension operator
\[ T^f: C(\psi^f(\ult(\mathcal{A}^f/\fin))) \to C(\ult(\mathcal{A}^f)).\]
Consequently, by Lemma \ref{p:2} there exists a continuous map $g^f: \ult(\mathcal{A}^f)\to M(\psi^f(\ult(\mathcal{A}^f/\fin)))$ such that, for any $U\in \ult(\mathcal{A}^f/\fin)$, $g^f(\psi^f(U)) = \delta_{\psi^f(U)}$. By continuity of $g^f$ we have
\[ \lim_n g^f(u^f_{q_x^n}) = \delta{u_{y,f(x)}^f},\]
for all $x\in B$. Let
\[ \phi^f: M(\psi^f(\ult(\mathcal{A}^f/\fin))) \to M(I_B),\]
be the isometry induced by the embedding $\psi^f$ and the homeomorphism $\varphi^f$. Let $e^f: Q\to M(I_B)$ be defined by $e^f(q) = \phi^f(g^f(u^f_q))$. Then, for any $x\in B$, the sequence $(e^f(q^n_x))_{n\in\omega}$ converges to $\delta_{(x,f(x))}$. It follows that the assignment $f\mapsto e^f$ is an injection of $2^B$ into $\mathcal{E}$, a contradiction.
\end{proof}

Theorem \ref{I_B} immediately implies the following.

\begin{corollary}
Let $\mathbb{K}$ be the double arrow space. There is a non-tame compactification $\gamma\omega$ which remainder is homeomorphic to $\mathbb{K}$.
Hence there is a nontrivial twisted sum of $c_0$ and $C(\mathbb{K})$.
\end{corollary}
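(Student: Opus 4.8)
The plan is to apply Theorem~\ref{I_B} directly, recognizing the double arrow space as a particular instance of the spaces $I_B$ studied there. In the notation of this section, $\mathbb{K}$ is precisely $K_A$ with $K=I=[0,1]$ and $A=(0,1)$; that is, $\mathbb{K}=I_B$ for the choice $B=(0,1)$.

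First I would check that $B=(0,1)$ meets the hypotheses of Theorem~\ref{I_B}. The set $(0,1)$ is (trivially) dense in $(0,1)$ and has cardinality $\kappa=\con$, so the required inequality $2^\kappa>\con$ reads $2^\con>\con$, which holds in ZFC by Cantor's theorem. In particular, unlike in the general statement of Theorem~\ref{separable_LOTS}, no extra set-theoretic assumption is needed: the cardinal condition is automatic once $\kappa=\con$. Feeding this $B$ into Theorem~\ref{I_B} produces a non-tame compactification $\gamma\omega$ whose remainder is homeomorphic to $I_{(0,1)}=\mathbb{K}$, which is the first assertion.

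For the second assertion I would invoke Lemma~\ref{p:1}. Since $\gamma\omega$ is non-tame, the natural copy of $c_0$ in $C(\gamma\omega)$ is not complemented; equivalently, viewing $\gamma\omega\in\cde(\mathbb{K})$, there is no extension operator $C(\mathbb{K})\to C(\gamma\omega)$. By Lemma~\ref{p:1} this means that the canonical short exact sequence $0\to c_0\to C(\gamma\omega)\to C(\mathbb{K})\to 0$ is a nontrivial twisted sum of $c_0$ and $C(\mathbb{K})$. Since the whole argument is nothing more than substituting a concrete $B$ into an already-established theorem, there is no genuine obstacle here; the only points requiring a moment's attention are the identification $\mathbb{K}=I_{(0,1)}$ and the observation that the hypothesis $2^\kappa>\con$ holds unconditionally once $\kappa=\con$, so that, in contrast with Theorem~\ref{separable_LOTS}, the conclusion for the double arrow space is a theorem of ZFC.
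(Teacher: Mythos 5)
Your proposal is correct and coincides with the paper's argument: the authors obtain this corollary as an immediate consequence of Theorem~\ref{I_B}, exactly by taking $B=(0,1)$ so that $\mathbb{K}=I_B$ with $\kappa=\con$ and $2^{\con}>\con$ holding in ZFC. Your use of Lemma~\ref{p:1} to pass from non-tameness to a nontrivial twisted sum is precisely the paper's intended (implicit) final step.
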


 Let us remark that if $K$ is a closed subset of a linearly ordered compact space $L$, then there is a regular extension operator $E: C(K)\to C(L)$, cf.\ \cite{HL74}. Using this fact one can easily deduce Theorem \ref{separable_LOTS} from Theorem \ref{I_B},  Lemma \ref{LOTS_subspace}, Remark \ref{p:11} and Proposition \ref{p:12}.

\section{On scattered compact spaces}\label{sc}

We  start this section by presenting one more construction of non-tame compactifications of $\omega$,
based on an idea similar to that  used in the proof of Theorem \ref{I_B}.
For a compact space $K$ we denote by $\mathrm{Auth}(K)$ the group of autohomeomorphisms of $K$.

\begin{theorem}\label{auth}
Let $L$ be a compact space such that
\begin{enumerate}[(i)]
\item $|M(L)|=\con$,
\item $L$ contains a continuous image $K$ of $\omega^*=\beta\omega\sm\omega$ such that $|\mathrm{Auth}(K)|>\con$.
\end{enumerate}
Then there exists a countable discrete extension $L'$ of $L$ such that there is no extension operator $E:C(L)\to C(L')$;
in particular, there is a nontrivial twisted sum of $C(L)$ and $c_0$.

Moreover, if $L=K$ then we may additionally require that $L'$ is a non-tame compactification of $\omega$ which remainder is homeomorphic to $L$.
\end{theorem}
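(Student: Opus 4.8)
The plan is to parametrise countable discrete extensions of $L$ by the autohomeomorphisms of $K$ and to show, by a cardinality argument, that more than $\con$ of these extensions cannot all admit an extension operator, since each such operator would encode the corresponding autohomeomorphism as a bounded sequence of measures in $M(L)$, while by (i) there are only $\con$ such sequences.

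First I would fix, using (ii), a continuous surjection $q:\omega^*\to K$. For each $h\in\mathrm{Auth}(K)$ let $L'_h$ be the adjunction space obtained from the disjoint union $\beta\omega\sqcup L$ by collapsing $\omega^*\sub\beta\omega$ onto $K\sub L$ along $h\circ q$; that is, $L'_h$ is the pushout of $L\xleftarrow{h\circ q}\omega^*\hookrightarrow\beta\omega$. Since $\omega^*$ is closed in $\beta\omega$ and both $\beta\omega$ and $L$ are compact Hausdorff, $L'_h$ is compact Hausdorff, the canonical image of $\omega$ is an open discrete set $\{d_n:n\in\omega\}$, and $L$ embeds as a closed subspace with $L'_h\sm L=\{d_n\}$; thus $L'_h\in\cde(L)$. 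When $L=K$ this construction collapses $\omega^*$ onto $K$, so $L'_h$ is a compactification of $\omega$ whose remainder is homeomorphic to $K$, which will yield the ``moreover'' part.

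Assume towards a contradiction that every $L'_h$ admits an extension operator $C(L)\to C(L'_h)$. By Lemma \ref{p:2}(iii) this gives, for each $h$, a bounded sequence $(\nu_n^h)_n$ in $M(L)$ with $\nu_n^h-\delta_{d_n}\to 0$ in the $weak^*$ topology of $M(L'_h)$. The crux is the behaviour of these measures along a free ultrafilter $\mathcal U$ on $\omega$. Writing $\pi:\beta\omega\to L'_h$ for the quotient map, we have $\pi(n)=d_n$ and $\pi(\mathcal U)=h(q(\mathcal U))$, while $\lim_{\mathcal U} n=\mathcal U$ in $\beta\omega$; hence $\lim_{\mathcal U}d_n=h(q(\mathcal U))\in K\sub L$. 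Testing $\nu_n^h-\delta_{d_n}\to 0$ against Tietze extensions of functions $g\in C(L)$ and passing to the $\mathcal U$-limit, I obtain that the $weak^*$ $\mathcal U$-limit of $(\nu_n^h)_n$ in $M(L)$ equals $\delta_{h(q(\mathcal U))}$ (the limit exists since the $\nu_n^h$ lie in a fixed $weak^*$-compact ball $M_r(L)$).

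This identity recovers $h$ from the sequence: if $h\neq h'$ then $h(k)\neq h'(k)$ for some $k\in K$, and since $q$ is onto we may write $k=q(\mathcal U)$, so the two sequences have different $weak^*$ $\mathcal U$-limits $\delta_{h(k)}\neq\delta_{h'(k)}$ and are therefore distinct. Thus $h\mapsto(\nu_n^h)_n$ is an injection of $\mathrm{Auth}(K)$ into the set of all sequences in $M(L)$, whose cardinality is $|M(L)|^{\aleph_0}=\con^{\aleph_0}=\con$ by (i); this contradicts $|\mathrm{Auth}(K)|>\con$. Hence some $L'_{h_0}$ admits no extension operator, and Lemma \ref{p:1} turns this into a nontrivial twisted sum of $c_0$ and $C(L)$; when $L=K$ the space $L'_{h_0}$ is the required non-tame compactification of $\omega$ with remainder $K$. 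The main obstacle I expect is the bookkeeping showing that the construction genuinely depends on $h$: note that the underlying quotient of $\beta\omega$ is the same for every $h$, so the $h$-dependence lives entirely in the identification of the remainder with $K\sub L$, and it is precisely the ultrafilter-limit computation that makes $h\mapsto(\nu_n^h)_n$ injective in spite of this.
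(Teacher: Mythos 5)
Your proposal is correct and takes essentially the same route as the paper's proof: both parametrize countable discrete extensions of $L$ by the more than $\con$ ways of gluing the remainder of a compactification of $\omega$ onto $K\subseteq L$ (the paper glues a fixed $\gamma\omega$ along varying homeomorphisms $\varphi:\gamma\omega\sm\omega\to K$, you form the pushout of $\beta\omega$ along $h\circ q$ for varying $h\in\mathrm{Auth}(K)$ --- the same quotient construction), then use Lemma \ref{p:2} to assign to each extension operator a sequence in $M(L)$, and conclude by injectivity of this assignment against the count $|M(L)|^{\aleph_0}=\con$. The only cosmetic difference is that you verify injectivity via $\UU$-limits of the measure sequences, where the paper runs the equivalent contrapositive argument with continuity and a point $n\in U_0\cap U_1\cap\omega$.
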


\begin{proof}
The assumption that $K$ is a  continuous image of $\omega^*$ is equivalent to the existence of a compactification $\gamma\omega$ of $\omega$ with the remainder  $\gamma\omega\setminus\omega$ homeomorphic to $K$. We can assume that $\gamma\omega$ and $L$ are disjoint. Let $S=L\cup \gamma\omega$ be the disjoint
union of $L$ and $\gamma\omega$.  Consider the family  $\mathcal{H}$ of all homeomorphisms $\varphi: \gamma\omega\setminus\omega\to K$. By our assumption $|\mathcal{H}|>\con$. For any $\varphi\in \mathcal{H}$, let $\sim_\varphi$ be the equivalence relation on $S$ given by $x \sim_\varphi \varphi(x)$ for all $x\in \gamma\omega\setminus\omega$ and let $q_\varphi: S\to S/_{\sim_\varphi}$ be a corresponding quotient map. Clearly $q_\varphi(L)$ is homeomorphic to $L$ and $S/_{\sim_\varphi}$ is a countable discrete extension of $q_\varphi(L)$.

Suppose that for all $\varphi\in \mathcal{H}$ there is an extension operator
\[T_\varphi: C(q_\varphi(L))\to C(S/_{\sim_\varphi}).\]
 Consequently, by Lemma \ref{p:2} for every such $\vf$ there exists a continuous map
 \[ h_\varphi: S/_{\sim_\varphi}\to M(q_\varphi(L)),\]
satisfying  $h_\varphi(y) = \delta_{y}$ for every  $y\in q_\varphi(L)$. Let $g_\varphi: M(q_\varphi(L))\to M(L)$ be the isometry induced by the homeomorphism $q_\varphi|L: L\to q_\varphi(L)$. Define $e_\varphi:\omega\to M(L)$ by $e_\varphi(n) = g_\varphi(h_\varphi(q_\varphi(n)))$ for $n\in\omega$. Observe that the family of all maps from $\omega$ to $M(L)$ has the cardinality $\con$, since $|M(L)|=\con$. We will get the desired contradiction by showing that the assignment $\varphi\mapsto e_\varphi$ is one-to-one. Fix distinct $\varphi_0,\varphi_1\in\mathcal{H}$, and take $x\in \gamma\omega\setminus\omega$ such that $\varphi_0(x)\ne \varphi_1(x)$. Pick $f\in C(L)$ with $f(\varphi_i(x))=i$, $i=0,1$. Since $q_{\varphi_i}(x) = q_{\varphi_i}(\varphi_i(x)))$, we have $g_{\varphi_i}(h_{\varphi_i}(q_{\varphi_i}(x)))(f)=i$. By continuity of $h_{\varphi_i}$ we can find a neighborhood $U_i$ of $x$ in $\gamma\omega$ such that
\[ g_{\varphi_0}(h_{\varphi_0}(q_{\varphi_0}(z)))(f)<1/2  \mbox{ for } z\in U_0; \]
\[ g_{\varphi_1}(h_{\varphi_1}(q_{\varphi_1}(z')))(f)>1/2  \mbox{ for } z'\in U_1.\]
Now, for any $n\in\omega$ such that $n\in U_0\cap U_1$, we have $e_{\varphi_0}(n)(f)<1/2<e_{\varphi_1}(n)(f)$.
\end{proof}

We shall now consider the well-known class of compact spaces associated with uncountable almost
disjoint families of subsets of $\omega$ --- separable compact spaces $K$ whose set of accumulation points is the one-point compactification of an uncountable discrete space. Such compact spaces were
considered first by Aleksandrov and Urysohn \cite{AU29} and for that reason we
call them \textsf{AU}-compacta, cf.\ \cite{MP09}. It is worth recalling that the space $C(K)$ for    an  \textsf{AU}-compactum $K$
may have interesting structural properties, see Koszmider \cite{Ko05}.

In Section \ref{tts} we considered an \textsf{AU}-compactum described as the Stone space of the algebra of subsets of $\omega$ generated by
finite sets and a given almost disjoint family.
Below we  use the following description of \textsf{AU}-compacta.

Let $D$ be an infinite countable set and let $\mathcal{A}$ be an uncountable
almost disjoint family of infinite subsets of $D$, i.e.\  the
intersection of any two distinct members of $\mathcal{A}$ is
finite.
Let $A\mapsto p_A$ be a one-to-one correspondence between members
of $\mathcal{A}$ and points in some fixed set disjoint from
 $D$, an let $\infty$ be a point distinct from points in  $D$ and
 any point $p_A$. In the set
\begin{eqnarray*}\label{def_AU}
K_\mathcal{A}= D\cup\{p_A\colon A\in\mathcal{A}\} \cup \{\infty\}
\end{eqnarray*}
we introduce a topology declaring that points of $D$ are isolated,
basic neighborhoods $p_A$ are of the form $\{p_A\}\cup(A\setminus
F)$, where $F\subset A$ is finite, and $\infty$ is the point at
infinity of the locally compact space $D\cup\{p_A\colon
A\in\mathcal{A}\}$.

From Theorem \ref{auth} one can easily derive the following

\begin{corollary}\label{cor_AU}
Let $\mathcal{A}$ be an  almost
disjoint family of subsets of $\omega$ of cardinality $\kappa$, where $2^\kappa > \con$. Then there exists a non-tame compactification $\gamma\omega$ which remainder is homeomorphic to $K_\mathcal{A}$.
Hence there is a nontrivial twisted sum of $c_0$ and $C(K_\mathcal{A})$.
\end{corollary}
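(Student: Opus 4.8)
The plan is to deduce the statement from Theorem~\ref{auth}, applied not to $K_\mathcal{A}$ itself but to its compact subspace of accumulation points, and then to transfer the resulting failure of the extension property back to a compactification of $\omega$ by means of Proposition~\ref{p:12}. The key point to keep in mind is that $K_\mathcal{A}$ is \emph{separable} (the isolated points $D$ are dense), so every continuous self-map of $K_\mathcal{A}$ is determined by its restriction to $D$; consequently $|\mathrm{Auth}(K_\mathcal{A})|\le\con$, and the ``moreover'' clause of Theorem~\ref{auth} \emph{cannot} be invoked with $L=K=K_\mathcal{A}$. The remedy is to observe that the set of accumulation points of $K_\mathcal{A}$, namely $K:=\{p_A:A\in\mathcal{A}\}\cup\{\infty\}$, is precisely the one-point compactification of a discrete space of cardinality $\kappa$, whose homeomorphism group is the full symmetric group $\mathrm{Sym}(\kappa)$, of size $2^\kappa>\con$.

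First I would verify the two hypotheses of Theorem~\ref{auth} for $L=K_\mathcal{A}$ and $K$ as above. Since $K_\mathcal{A}$ is scattered of finite height, every measure on it is purely atomic, so $M(K_\mathcal{A})$ is isometric to $\ell_1(K_\mathcal{A})$; because $\aleph_0\le|K_\mathcal{A}|=\kappa\le\con$ (the bound $\kappa\le\con$ being automatic as $\mathcal{A}\subseteq P(\omega)$), this gives $|M(K_\mathcal{A})|=\kappa^{\aleph_0}\cdot\con=\con$, which is hypothesis (i). For hypothesis (ii) I note that $K$ is a continuous image of $\omega^*$: writing $\fA=\langle\mathcal{A}\cup\fin\rangle$, the space $K_\mathcal{A}$ is homeomorphic to the compactification $\ult(\fA)$ of $\omega=D$, and its remainder $\ult(\fA/\fin)$ is exactly $K$; by the equivalence recalled at the start of the proof of Theorem~\ref{auth}, being such a remainder is the same as being a continuous image of $\omega^*$. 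Finally $|\mathrm{Auth}(K)|=|\mathrm{Sym}(\kappa)|=2^\kappa>\con$, since an autohomeomorphism of $K$ must fix the unique non-isolated point $\infty$ and may permute the $\kappa$ isolated points arbitrarily.

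With the hypotheses in place, Theorem~\ref{auth} furnishes an $L'\in\cde(K_\mathcal{A})$ admitting no extension operator $C(K_\mathcal{A})\to C(L')$. Now I would invoke separability of $K_\mathcal{A}$: by Proposition~\ref{p:12} the mere existence of such an $L'$ yields a non-tame compactification $\gamma\omega$ whose remainder $\gamma\omega\setminus\omega$ is homeomorphic to $K_\mathcal{A}$. This is exactly the first assertion; the second then follows immediately, since non-tameness of $\gamma\omega$ means, by Lemma~\ref{p:1}, that $C(\gamma\omega)$ is a nontrivial twisted sum of $c_0$ and $C(K_\mathcal{A})$.

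The only genuinely delicate point is the one highlighted above: one cannot apply Theorem~\ref{auth} directly with $L=K=K_\mathcal{A}$, because the separability of $K_\mathcal{A}$ forces $|\mathrm{Auth}(K_\mathcal{A})|\le\con$. The two-step detour—apply the theorem to the whole space $L=K_\mathcal{A}$ (which has few autohomeomorphisms) while verifying hypothesis (ii) through the automorphism-rich subspace $K$ of accumulation points, and only afterwards realize $K_\mathcal{A}$ as a remainder via Proposition~\ref{p:12}—is precisely what makes the ``remainder homeomorphic to $K_\mathcal{A}$'' clause come out. Everything else, namely the two cardinality computations and the Stone-duality identification of the remainder, is routine.
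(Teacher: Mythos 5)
Your proof is correct and follows essentially the same route as the paper: both verify the hypotheses of Theorem \ref{auth} with $L=K_\mathcal{A}$ and $K$ the set of accumulation points (pure atomicity of measures on the scattered space giving $|M(K_\mathcal{A})|=\con$, the symmetric group on the isolated points giving $|\mathrm{Auth}(K)|=2^\kappa>\con$, and the remainder description of $K_\mathcal{A}$ as a compactification of $\omega$ giving that $K$ is a continuous image of $\omega^*$), and then pass through Proposition \ref{p:12} to obtain a non-tame compactification with remainder $K_\mathcal{A}$. Your added observation that separability of $K_\mathcal{A}$ forces $|\mathrm{Auth}(K_\mathcal{A})|\le\con$, so the ``moreover'' clause of Theorem \ref{auth} cannot be applied directly, correctly explains why the detour via Proposition \ref{p:12} is exactly the step the paper also takes.
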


\begin{proof} First, recall that every measure from $M(K_\mathcal{A})$ is purely atomic, hence  $|M(K_\mathcal{A})|= \con$. Second, observe that the subspace $K=\{p_A\colon A\in\mathcal{A}\} \cup \{\infty\}$ of $K_\mathcal{A}$ is homeomorphic to a one point compactification of a discrete space of cardinality $\kappa$, therefore $|\mathrm{Auth}(K)|= 2^\kappa>\con$. Next, notice that $K$ is a  continuous image of $\omega^*$, since $K_\mathcal{A}$ is a compactification of $\omega$ with remainder $K$. Finally, we can obtain the desired non-tame  compactification using Theorem \ref{auth} and Proposition \ref{p:12}.
\end{proof}
\medskip

Recall that a space $X$ is \emph{scattered} if no nonempty subset $A\subseteq X$ is dense-in-itself.
For an ordinal $\alpha$, $X^{(\alpha)}$ is the $\alpha$th
Cantor-Bendixson derivative of the space $X$. For a scattered
space $X$, the scattered height
${ht(X)}=\min\{\alpha: X^{(\alpha)} =
\emptyset\}$.

Using Theorem \ref{auth} we can also provide an alternative, more topological proof of the following result of Castillo.

\begin{theorem}[Castillo \cite{Ca16}]\label{Castillo}
Under $\CH$,  if  $K$ is a nonmetrizable scattered compact space of finite height, then there exists a nontrivial twisted sum of $c_0$ and $C(K)$.
\end{theorem}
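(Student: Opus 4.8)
The plan is to reduce the problem, under $\CH$, to the single space $A(\omega_1)$, the one-point compactification of a discrete set of size $\omega_1$, and then to transport a nontrivial twisted sum for $C(A(\omega_1))$ back to $C(K)$ through an extension operator. First I would note that $K$ must be uncountable, since a countable compact space is metrizable; hence, as there are only finitely many Cantor--Bendixson levels $L_i=K^{(i)}\sm K^{(i+1)}$, one of them is uncountable. Choosing $i^*$ to be the \emph{largest} index with $|L_{i^*}|\ge\omega_1$ guarantees that the closed subspace $F=K^{(i^*)}$ is a scattered compactum whose set of isolated points $L_{i^*}$ is uncountable while its derivative $F^{(1)}=K^{(i^*+1)}$ is countable of finite height.

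The combinatorial core is the following lemma, proved by induction on $\mathrm{ht}(F^{(1)})$: \emph{a compact scattered space $F$ with uncountably many isolated points and countable derivative contains a closed copy of $A(\omega_1)$.} The key observation is that once one produces a set $I_1$ of $\omega_1$ isolated points whose \emph{only} accumulation point is a single $z\in F^{(1)}$, the closure $\ol{I_1}=I_1\cup\{z\}$ is automatically homeomorphic to $A(\omega_1)$: a closed subset of $\ol{I_1}$ missing $z$ is a compact subset of the discrete set $I_1$, hence finite, so the neighborhoods of $z$ are cofinite. To find such $z$ and $I_1$ I would use that every set of regular uncountable cardinality in a compact space has a complete accumulation point. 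In the inductive step one distinguishes two cases. Either some point $d$ of $F^{(1)}\sm F^{(2)}$, isolated in $F^{(1)}$, is a complete accumulation point of the isolated set, in which case a clopen $V$ with $V\cap F^{(1)}=\{d\}$ has $d$ as its unique limit point and so contains a copy of $A(\omega_1)$; or no such $d$ is, in which case each of the countably many points of $F^{(1)}\sm F^{(2)}$ has a neighborhood meeting the isolated set in a countable set, and deleting a small open set around $F^{(1)}\sm F^{(2)}$ leaves $\omega_1$ isolated points whose accumulation is pushed into $F^{(2)}$, a derivative of strictly smaller height, to which the inductive hypothesis applies. This extraction, together with the extension operator built below, is where I expect the real work to lie.

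With a closed $L\cong A(\omega_1)$ in hand, I would verify the hypotheses of Theorem~\ref{auth} for $L$. Every measure on $L$ is purely atomic and $|L|=\omega_1=\con$, so $|M(L)|=\con$; under $\CH$ the space $L$ has weight $\omega_1$ and is therefore a continuous image of $\omega^*=\beta\omega\sm\omega$ by Parovi\v{c}enko's theorem; and every permutation of the $\omega_1$ isolated points extends to an autohomeomorphism of $L$, so $|\mathrm{Auth}(L)|=2^{\omega_1}>\con$. Taking $L$ itself in the role of both compacta in Theorem~\ref{auth} yields some $L'\in\cde(L)$ admitting no extension operator $C(L)\to C(L')$.

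Finally I would transport this to $K$ via Remark~\ref{p:11}, for which it remains to produce an extension operator $C(L)\to C(K)$. Since the points of $L$ are isolated inside $F=K^{(i^*)}$, the extension $C(L)\to C(F)$ is immediate and of norm one: writing $L=\{p_\alpha:\alpha<\omega_1\}\cup\{z\}$, send $f$ to the function equal to $f(p_\alpha)$ at each $p_\alpha$ and to $f(z)$ elsewhere, which is continuous on $F$ because $p_\alpha\to z$ and the $p_\alpha$ have no other accumulation point. The remaining extension $C(K^{(i^*)})\to C(K)$ is obtained by composing single-derivative extensions $C(K^{(j)})\to C(K^{(j-1)})$. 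Each of these exists because, using zero-dimensionality, one can choose pairwise disjoint clopen sets isolating the relatively discrete points $K^{(j)}\sm K^{(j+1)}$ and recurse into $K^{(j+1)}$, thereby defining a map $\rho$ from the isolated points of $K^{(j-1)}$ onto $K^{(j)}$ which is the identity on $K^{(j)}$ and is continuous at $K^{(j)}$; then $f\mapsto f\circ\rho$ is a bounded extension operator. Composing these operators with the norm-one map above gives an extension operator $C(L)\to C(K)$, and Remark~\ref{p:11} then produces a $K'\in\cde(K)$ with no extension operator $C(K)\to C(K')$, that is, a nontrivial twisted sum of $c_0$ and $C(K)$.
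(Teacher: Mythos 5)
Your reduction to $F=K^{(i^*)}$, your extraction of a closed copy of $A(\omega_1)$ (essentially the paper's Lemma \ref{scattered_one_point}), your norm-one operator $C(L)\to C(F)$, and your application of Theorem \ref{auth} to $L\cong A(\omega_1)$ are all sound under $\CH$. The fatal gap is the last transport step: you claim that bounded extension operators $C(K^{(j)})\to C(K^{(j-1)})$ always exist, built from a map $\rho$ that is the identity on $K^{(j)}$ and continuous there. Such a $\rho$ would be a retraction of $K^{(j-1)}$ onto $K^{(j)}$, and neither it nor any bounded extension operator need exist. Concretely, let $K=K_{\mathcal{A}}$ be the \textsf{AU}-compactum of an almost disjoint family $\mathcal{A}$ of cardinality $\omega_1$ --- a separable scattered compactum of height $3$, which is a central instance of Castillo's theorem (and exactly the class in Theorem \ref{added}). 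Here $i^*=1$ and $F=K^{(1)}\cong A(\omega_1)$, while $K$ is a compactification of $\omega$ whose remainder is $K^{(1)}$; so an extension operator $C(K^{(1)})\to C(K)$ would say precisely that this compactification is tame. By Theorem \ref{Kubis}(a) its remainder would then support a strictly positive measure, but $A(\omega_1)$ supports none: every measure on it is purely atomic with countably many atoms, whereas it has $\omega_1$ isolated points, each an open singleton requiring positive mass. Hence no extension operator $C(F)\to C(K)$ exists for these $K$, Remark \ref{p:11} cannot be invoked, and your argument proves nothing in this case. (Your description of $\rho$ as a map ``from the isolated points of $K^{(j-1)}$ onto $K^{(j)}$ which is the identity on $K^{(j)}$'' is also internally inconsistent, since the isolated points of $K^{(j-1)}$ are disjoint from $K^{(j)}$; but the counterexample shows the step is irreparable, not merely misstated.)

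The paper avoids this trap by transporting the twisted sum through a \emph{retract} rather than an extension operator, and in the opposite structural direction. By Proposition \ref{scattered_retract}, $K$ has a nonmetrizable retract $L$ with $|L|\le\con$; since $C(L)$ is then complemented in $C(K)$, any nontrivial twisted sum of $c_0$ and $C(L)$ yields one of $c_0$ and $C(K)$ by adding the complementary summand. Theorem \ref{auth} is then applied with $L$ itself as the ambient space: the copy $S$ of a one-point compactification of an uncountable discrete set (Lemma \ref{scattered_one_point}) serves only as the subspace with $|\mathrm{Auth}(S)|=2^{\con}>\con$, and no extension operator from $C(S)$ into $C(L)$ is ever needed --- this is exactly why the cardinality reduction $|L|\le\con$ matters, as it secures the hypothesis $|M(L)|=\con$ via pure atomicity. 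Your plan, by contrast, produces a twisted sum with quotient $C(A(\omega_1))$ and then needs to push it up to quotient $C(K)$, which requires either an extension operator $C(L)\to C(K)$ (provably absent, as above) or $L$ being a retract of $K$ (false in general for the same example, since a retraction would induce such an operator). To repair the proof you would have to adopt the paper's retract-based reduction.
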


Our argument is based on the following two auxiliary facts.

\begin{proposition}\label{scattered_retract}
Each nonmetrizble scattered compact space $K$  contains a nonmetrizble retract of cardinality at most $\con$.
\end{proposition}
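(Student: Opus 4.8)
The plan is to argue dually, via Stone duality, and to reflect $K$ to a small retract using an elementary submodel. Since $K$ is scattered compact it is zero-dimensional, so $K=\ult(\fA)$ for the superatomic Boolean algebra $\fA=\clop(K)$, and nonmetrizability means $|\fA|=w(K)>\omega$. First I would record the cardinality bookkeeping I need: for any scattered compact space the spread is at most the weight (an isolated point of a subspace is singled out by one member of a fixed base, and distinct isolated points need distinct basic sets), whence by induction on Cantor--Bendixson height $|L|\le w(L)$ for every scattered compact $L$. In particular any superatomic algebra of size $\le\con$ has Stone space of cardinality $\le\con$.

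Next I fix a large $\theta$ and an elementary submodel $M\prec H(\theta)$ with $K,\fA\in M$, $\omega_1\sub M$, $[M]^\omega\sub M$ and $|M|=\con$ (possible since $\con^\omega=\con$). Put $\fB=\fA\cap M$. By elementarity $\fB$ is a subalgebra of $\fA$; it is superatomic, since $\ult(\fB)$ is a continuous image of the scattered space $K$ under the map dual to $\fB\hookrightarrow\fA$; it has size $\le\con$; and it is uncountable, because $H(\theta)$ sees an injection of $\omega_1$ into the uncountable $\fA$, such an injection lies in $M$, and $\omega_1\sub M$ then forces uncountably many of its values into $\fB$. Hence $\ult(\fB)$ is a nonmetrizable scattered compact space of weight, and so of cardinality, at most $\con$.

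It remains to realize $\ult(\fB)$ as a \emph{retract} of $K$. Dually this is the same as producing a Boolean homomorphism $\phi:\fA\to\fB$ with $\phi\uhr\fB=\mathrm{id}$: then $\iota\circ\phi$ (with $\iota:\fB\hookrightarrow\fA$) is an idempotent endomorphism of $\fA$ whose dual is a retraction of $K$ onto a homeomorphic copy of $\ult(\fB)$. Equivalently, I must produce a continuous section $s:\ult(\fB)\to K$ of the canonical surjection $q:K\to\ult(\fB)$, the retract being $R=s[\ult(\fB)]$ with retraction $s\circ q$. I would build $s$ by recursion on the Cantor--Bendixson rank of $\ult(\fB)$: over an isolated $u$ the fibre $q^{-1}(u)$ is clopen and one chooses a representative, while over a $u$ of positive rank the value $s(u)$ is forced to be the limit in $K$ of the already-chosen representatives $s(v)$ along the isolated $v$ accumulating to $u$; the closure of $M$ under countable sequences together with elementarity is what guarantees that this limit again lies in $q^{-1}(u)$ and that the choices cohere. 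The guiding case is height two, where $K$ has finitely many accumulation points and one simply collapses everything outside a size-$\omega_1$ cluster $D_0\cup\{p\}$ around one nonmetrizable accumulation point $p$ onto $p$; this is readily checked to be a retraction onto the one-point compactification of $D_0$.

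The hard part will be exactly this construction of $s$ (equivalently, of the Boolean retraction $\phi$): showing that the recursive selection is continuous across limit Cantor--Bendixson levels and that the point chosen in each fibre is the canonical, $M$-determined one, so that $q\circ s=\mathrm{id}$. This is where scatteredness is indispensable, since every fibre $q^{-1}(u)$ is itself scattered and thus has a finite top Cantor--Bendixson level; after using elementarity of $M$ to pin down the correct representative there, the selection becomes single-valued and continuous. Once $R$ is obtained it is nonmetrizable, has cardinality $\le\con$, and is a retract of $K$, as required.
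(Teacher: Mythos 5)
Your preparatory steps (zero-dimensionality of scattered compacta, $|L|\le w(L)$, the fact that $\fB=\fA\cap M$ is an uncountable superatomic subalgebra of size $\le\con$) are fine, but the heart of the proof --- realizing $\ult(\fB)$ as a retract of $K$ via a continuous section $s:\ult(\fB)\to K$ of the canonical quotient $q$, equivalently a Boolean homomorphism $\fA\to\fB$ fixing $\fB$ --- is exactly the step you leave unproved, and it is not merely hard but false in instances of precisely this shape, so your recursion cannot be completed by the means you indicate. Locally, the justification is wrong: at a non-isolated $u\in\ult(\fB)$ of uncountable character (unavoidable once $\ult(\fB)$ is nonmetrizable) the isolated points accumulating at $u$ form a net, not a sequence; the independently chosen representatives $s(v)$ need not converge in $K$, their accumulation set can meet the fibre $q^{-1}(u)$ in many points, and neither $[M]^\omega\sub M$ nor elementarity singles one out, so $s(u)$ is not ``forced''. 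Globally, the section can fail to exist at all. Let $\cA$ be an uncountable almost disjoint family on $\omega$, $K=K_\cA$, and suppose $\fB$ is generated by $\fin$ together with an infinite proper subfamily $\cA_0\subsetneq\cA$ --- which is what $\fA\cap M$ looks like whenever $M$ fails to swallow all of $\cA$, and in the only case where your theorem is not trivially witnessed by $K$ itself, namely $w(K)>\con$, the submodel of size $\con$ necessarily misses $|\fA|$-many elements of $\fA$. Since the singletons $\{n\}$ and the sets $A\cup\{p_A\}$, $A\in\cA_0$, lie in $\fB$, the fibres over $\omega$ and over the points $p_A$, $A\in\cA_0$, are singletons, so any section is the identity there; the fibre over the remaining point $\infty_\fB$ is $\{\infty\}\cup\{p_B:B\in\cA\sm\cA_0\}$, and no choice of $s(\infty_\fB)$ is continuous. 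Indeed, if $s(\infty_\fB)=\infty$, fix $B\in\cA\sm\cA_0$: every basic neighbourhood of $\infty_\fB$ excludes only finitely many sets $A_i\cup\{p_{A_i}\}$ with $A_i\in\cA_0$ and a finite set, hence contains infinitely many $n\in B$, whose images $s(n)=n$ lie in $B$, while $K\sm(B\cup\{p_B\})$ is a neighbourhood of $\infty$; if $s(\infty_\fB)=p_B$, then the infinitely many points $p_A$, $A\in\cA_0$, accumulate at $\infty_\fB$ in $\ult(\fB)$, yet $s(p_A)=p_A$ avoids every basic neighbourhood $\{p_B\}\cup(B\sm F)$ of $p_B$. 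The same computation applies verbatim to Mr\'owka-type spaces over strongly almost disjoint families of uncountable subsets of $\omega_1$; when such a family of cardinality greater than $\con$ exists (which is consistent), this kills \emph{every} admissible $M$ in a space of weight $>\con$, even though the proposition itself is a theorem of \textsf{ZFC}. Your height-two ``guiding case'' is misleading for the same reason: there a section exists trivially, but at height three it already breaks.

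The paper's proof avoids splitting any quotient. It picks a clopen $L\sub K$ of minimal Cantor--Bendixson height among the uncountable clopen subspaces, trims so that the top level is a single point $p$, and observes that minimality makes every $x\in L\sm\{p\}$ have a \emph{countable} (hence metrizable) clopen neighbourhood $U_x$, so that closures away from $p$ are computed by convergent sequences. Iterating $A\mapsto\overline{\bigcup\{U_x:x\in A\}}\sm\{p\}$ through $\omega_1$ stages, starting from a set of size $\omega_1$, yields an open set $B$ of size $\le\con$ such that $B\cup\{p\}$ is closed (sequential closure plus cofinality $\omega_1$), and the retraction is simply the collapse fixing $B\cup\{p\}$ and sending $L\sm B$ to $p$; since $L$ is clopen in $K$, this extends to a retraction of $K$. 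Note that in the example above such a collapse retraction exists trivially while your section does not: producing a small retract only requires the complement of the retract to collapse to a single top point, a far weaker demand than a Boolean retraction of $\fA$ onto $\fA\cap M$. To salvage a submodel approach you would need to show that the set of points $M$ ``sees'' can be closed off so that its complement collapses to one point --- which is, in effect, what the paper's $\omega_1$-iteration does by hand, without any submodel.
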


\begin{proof} Consider the family of all uncountable (equivalently,  nonmetrizable) clopen subspaces of $K$, and pick such a subspace $L$ of minimal height $\alpha$. By compactness of $L$, $\alpha$ is a successor ordinal, i.e., $\alpha=\beta+1$. The set $L^{(\beta)}$ is finite, therefore we can partition $L$ into finitely many clopen sets containing exactly one point from $L^{(\beta)}$. One of these sets must be uncountable, hence, without loss of generality we can assume that $L^{(\beta)}=\{p\}$. For every $x\in L\setminus \{p\}$ fix a clopen neighborhood $U_x$ of $x$ in $L$ such that $p\notin U_x$. Clearly, the height of $U_x$ is less than $\alpha$, so, by our choice of $L$, $U_x$ must be countable, hence metrizable. Since every point of $L\setminus \{p\}$ has a metrizable neighborhood it follows that
\begin{equation}\label{s_r1}
(\forall  A\subset L)\,  (\forall  x\in \overline{A}\setminus \{p\})\, (\exists (x_n))\quad x_n\in A \mbox{ and } x_n\rightarrow x\,.
\end{equation}

For any subset $A\subseteq L\setminus \{p\}$ define
\[\varphi(A) = \overline{\bigcup\{U_x: x\in A\}}\setminus \{p\}\,.\]

Observe that, by $|U_x|\le\omega$ and  (\ref{s_r1}), we have
\begin{equation}\label{s_r2}
|\varphi(A)|\le \con, \mbox{ provided } |A|\le \con\,.
\end{equation}

Fix any subset $A\subseteq L\setminus \{p\}$ of cardinality $\omega_1$. We define inductively, for any $\alpha<\omega_1$, sets  $A_\alpha\subseteq L\setminus \{p\}$. We start with $A_0=A$, and at successor stages we put $A_{\alpha+1}= \varphi(A_\alpha)$. If $\alpha$ is a limit ordinal we define  $A_\alpha = \bigcup\{A_\beta: \beta<\alpha\}$. Finally we take $B = \bigcup\{A_\alpha: \alpha<\omega_1\}$. From (\ref{s_r2}) we conclude that $|B|\le \con$. First, observe that $B$ is open in $L$, since, for any $x\in B$, $x$ belongs to some $A_\alpha$, and then $U_x\subseteq A_{\alpha+1}\subseteq B$. Second, the union $M= B\cup \{p\}$ is closed in $L$. Indeed, if $x\in \overline{B}\setminus \{p\}$, then by (\ref{s_r1}), there is a sequence $(x_n)$ in $B$ converging to $x$. We have $\{x_n: n\in\omega\}\subseteq A_\alpha$, for some $\alpha<\omega_1$, therefore $x\in A_{\alpha+1}\subseteq B$. Now, we can define a retraction $r: L\to M$ by
\[ r(x)= \begin{cases} x& \mbox{ for } x\in M,\\
p& \mbox{ for } x\in L\setminus M\,.
\end{cases}\]
Then $r$ is continuous since it is continuous on closed sets $M$ and $L\setminus B$. It remains to observe that $M$ is also a retract of $K$, since $L$ is  a retract of $K$.
\end{proof}

\begin{lemma}\label{scattered_one_point}
Every nonmetrizable scattered compact space $K$ of finite height contains a copy of a one point compactification of an uncountable discrete space.
\end{lemma}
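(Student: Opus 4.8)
The plan is to argue by reducing to a single Cantor--Bendixson level and then thinning out the isolated points. First I recall the standing facts: a scattered compact Hausdorff space is totally disconnected, hence zero-dimensional, so $K$ has a base of clopen sets; and a scattered compact space is metrizable precisely when it is countable, so the hypothesis ``nonmetrizable'' is the same as ``uncountable''. Write $K=K^{(0)}\supseteq K^{(1)}\supseteq\cdots\supseteq K^{(n)}=\emptyset$ for the Cantor--Bendixson derivatives; since $K$ has finite height, the last nonempty derivative is a finite discrete set.

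Next I would reduce to the case that $K^{(1)}$ is countable. Let $m$ be the largest index for which $K^{(m)}$ is uncountable; such $m$ exists because $K^{(0)}$ is uncountable while the top derivative is finite, hence countable. Put $X=K^{(m)}$. Then $X$ is a closed, hence compact, uncountable scattered subspace of $K$, and $X^{(1)}=K^{(m+1)}$ is countable. Since $X$ carries the subspace topology inherited from $K$, any subspace of $X$ homeomorphic to the one-point compactification of an uncountable discrete space is automatically such a subspace of $K$. Thus it suffices to prove the statement under the extra assumption that $K^{(1)}$ is countable; write $I=K\setminus K^{(1)}$ for the (now uncountable) set of isolated points of $K$.

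For the main step, call a point $p\in K$ \emph{heavy} if every neighborhood of $p$ meets $I$ in an uncountable set, and let $H$ be the set of heavy points; clearly $H\subseteq K^{(1)}$. I would first check that $H$ is closed (if $p\notin H$, pick a clopen neighborhood $U$ of $p$ with $U\cap I$ countable; every point of $U$ is then light, so $U\subseteq K\setminus H$) and nonempty: if every point of the compact set $K^{(1)}$ were light, a finite subcover of the light neighborhoods would produce a clopen $U\supseteq K^{(1)}$ with $U\cap I$ countable, whence $K\setminus U$ would be a clopen set with empty derivative, i.e.\ a finite subset of $I$, forcing $I$ to be countable, a contradiction. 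Since $H$ is a nonempty closed subset of the countable compact scattered space $K^{(1)}$, it is itself countable compact scattered and so has an isolated point $p$. I then choose a clopen $V_0\ni p$ with $V_0\cap H=\{p\}$; as $p$ is heavy, $V_0\cap I$ is uncountable.

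Finally I carry out the thinning. The set $C=(V_0\cap K^{(1)})\setminus\{p\}$ is countable, say $C=\{q_1,q_2,\dots\}$, and every $q_j$ is light, so I may pick clopen sets $W_j\subseteq V_0$ with $q_j\in W_j$, $p\notin W_j$, and $W_j\cap I$ countable. Then $J=(V_0\cap I)\setminus\bigcup_j W_j$ is uncountable, because $I\cap\bigcup_j W_j$ is a countable union of countable sets. No $q_j$ is an accumulation point of $J$, since the neighborhood $W_j$ misses $J$; and accumulation points of $J\subseteq I$ lying in the clopen set $V_0$ must lie in $V_0\cap K^{(1)}$ (no isolated point of $K$ can be an accumulation point of anything). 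Hence $p$ is the only possible accumulation point of $J$, and it is one because $J$ is infinite in a compact space. Consequently $\overline{J}=J\cup\{p\}$ is compact, every point of $J$ is isolated in it, and $p$ is its unique accumulation point, so $\overline{J}$ is homeomorphic to the one-point compactification of the uncountable discrete space $J$, as required. I expect the reduction to a countable first derivative to be the conceptual crux: it is exactly what allows the countably many ``light'' neighborhoods $W_j$ to absorb only countably many isolated points, leaving an uncountable remainder clustering at the single point $p$.
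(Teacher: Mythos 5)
Your proof is correct. Its skeleton coincides with the paper's: pass to the last uncountable Cantor--Bendixson level (the existence of $m=\max\{i:|K^{(i)}|>\omega\}$ being exactly where finite height enters), delete countably many neighborhoods that meet the isolated points in only countable sets, and check that the uncountable discrete remainder clusters at a single point. Where you genuinely diverge is in how the cluster point and the small neighborhoods are secured. The paper first normalizes the whole space by reusing the opening argument of Proposition \ref{scattered_retract}: a minimal uncountable clopen subspace of minimal height may be assumed to have one-point top level $\{p\}$, and minimality of the height forces every clopen set avoiding $p$ to be countable; the deleted neighborhoods $U_x$ of the points of $K^{(k+1)}\setminus\{p\}$ then come for free, and $p$ is automatically the unique accumulation point of the resulting set $A$. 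You instead stay inside $X=K^{(m)}$ and run a self-contained compactness argument: the set $H$ of heavy points is closed and nonempty (otherwise a finite subcover by light neighborhoods would make the isolated points countable), you localize at a point $p$ isolated in $H$ via the clopen set $V_0$, and lightness of the countably many remaining first-level points in $V_0$ supplies the sets $W_j$. Your route avoids any appeal to the minimal-height trick of Proposition \ref{scattered_retract}, at the cost of the extra bookkeeping with $H$ and $V_0$; the paper's normalization is shorter once that proposition is in place and produces the stronger normal form (countable clopen neighborhoods at all points other than $p$), which is what makes its two-line construction of $A$ possible. Both arguments use finite height only through the existence of the maximal uncountable level, and both would fail without it, as spaces like $[0,\omega_1]$ show.
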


\begin{proof} Let $n+1$ be the height of $K$. Using the same argument as at beginning of the proof of Proposition \ref{scattered_retract}, without loss of generality, we can assume that $K^{(n)}=\{p\}$ and every $x\in K\setminus \{p\}$ has a countable clopen neighborhood $U_x$ in $K$. Let $k=\max\{i: |K^{(i)}|>\omega\}$. Consider
\[ A= K^{(k)}\setminus(\bigcup\{U_x: x \in  K^{(k+1)}\setminus \{p\}\}\cup\{p\})\,.\]
Observe that by our choice of $k$, the set $A$ is uncountable. One can easily verify that the set $A$ is discrete and $p$ is the unique accumulation point of $A$. Therefore $L=A\cup \{p\}$ is a one point compactification of an uncountable discrete space.
\end{proof}

\begin{proof}[Proof of Theorem \ref{Castillo}]
Let $K$ be a nonmetrizable scattered compact space of finite height, and let $L$ be a a nonmetrizable retract of $K$ of  cardinality at most $\con$, given by Proposition \ref{scattered_retract}. Obviously, in the presence of continuum hypothesis, we have $|L|=\con$. Since $L$ is a retract of $K$ it is enough to justify the existence of a nontrivial twisted sum of $c_0$ and $C(L)$. Take a copy $S$ in $L$ of a one point compactification of an uncountable discrete space, given by
Lemma \ref{scattered_one_point}. Obviously, we have  $|S|=\con$. Since $L$ is scattered, every measure in $M(L)$ is purely atomic, hence  $|M(L)|= \con$. We also have $|\mathrm{Auth}(S)|= 2^{\con}$, so we can apply Theorem \ref{auth} as in the proof of Corollary \ref{cor_AU}.
\end{proof}

Clearly, a compact scattered space supports measure if and only if it is separable. Therefore we have the following easy consequence of Lemma \ref{Kubis}.

\begin{corollary}\label{scattered_nonsep}
If $K$ is a nonseparable scattered compact space of weight $\omega_1$, then there exists a nontrivial twisted sum of $c_0$ and $C(K)$.
\end{corollary}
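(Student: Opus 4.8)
The plan is to deduce this directly from Theorem \ref{Kubis}(b), which asserts that a compact space of weight $\omega_1$ that does not support a (strictly positive) measure gives rise to a nontrivial twisted sum of $c_0$ and $C(K)$. Since $K$ is assumed to have weight $\omega_1$, the only thing I need to verify is that our nonseparable scattered $K$ carries no strictly positive measure; the rest is an immediate application. So the whole task reduces to justifying the sentence preceding the corollary, namely that a compact scattered space supports a strictly positive measure precisely when it is separable.

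First I would recall the classical fact, already invoked several times in the paper (e.g.\ in the proofs of Corollary \ref{cor_AU} and Theorem \ref{Castillo}), that every Radon measure on a scattered compact space is purely atomic. Thus, if $K$ supported some $\mu\in P(K)$ with $\mu(U)>0$ for every nonempty open $U\sub K$, I could write $\mu=\sum_{n} a_n\delta_{x_n}$ with $a_n>0$ and countably many points $x_n\in K$. I would then argue that strict positivity forces the set $\{x_n:n\in\omega\}$ to be dense: indeed, if $U$ were a nonempty open set missing all the $x_n$, then $\mu(U)=0$, contradicting strict positivity. Hence $K$ would be separable, against our hypothesis. This shows that a nonseparable scattered compact space supports no strictly positive measure.

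With this in hand, I would simply apply Theorem \ref{Kubis}(b) to $K$: it has weight $\omega_1$ and supports no measure, so there exists a nontrivial twisted sum of $c_0$ and $C(K)$, completing the proof.

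There is essentially no hard step here; the only substantive ingredient is the atomicity of measures on scattered compacta, which is standard and used elsewhere in the text, so I would state it as a recalled fact rather than reprove it. The argument is therefore a short, direct consequence of Theorem \ref{Kubis}, exactly as the phrase ``easy consequence'' in the excerpt suggests.
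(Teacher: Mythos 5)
Your proposal is correct and follows exactly the paper's route: the paper derives the corollary from Theorem \ref{Kubis}(b) via the observation (stated just before the corollary) that a compact scattered space supports a strictly positive measure if and only if it is separable, and your argument merely fills in the standard details of that observation (pure atomicity of measures on scattered compacta plus density of the atoms of a strictly positive measure). There is no gap and no genuine divergence from the paper's proof.
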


Theorem \ref{Castillo}, Corollary \ref{cor_AU}, and Corollary  \ref{scattered_nonsep} should be compared with the following direct consequence of Corollary \ref{tts:3}.

\begin{theorem}\label{added}
Assume $\MA(\kappa)$ and  let $K$ be a separable scattered compact space of height $3$ and weight $\kappa$. Then every twisted sum of  $c_0$ and $C(K)$ is trivial.
\end{theorem}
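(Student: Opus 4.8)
The plan is to reduce $K$ to the spaces already handled by Corollary \ref{tts:3}: I will show that $\clop(K)$ is, up to a finite product, an algebra generated by an almost disjoint family together with the finite sets, and then feed this into the machinery of Section \ref{tts}.

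First I would record the coarse structure of $K$. As a scattered compactum of height $3$, the set $K^{(2)}$ is a nonempty finite discrete set, say $K^{(2)}=\{q_1,\ldots,q_l\}$, the points of $K^{(1)}\sm K^{(2)}$ are exactly the isolated points of $K^{(1)}$, and the set $I=K\sm K^{(1)}$ of isolated points of $K$ is dense. Separability forces $I$ to be countable, since every isolated point belongs to every dense subset; so I identify $I$ with $\omega$. Being scattered and compact, $K$ is zerodimensional, so I may choose pairwise disjoint clopen neighbourhoods $N_1,\ldots,N_l$ of the $q_j$ with $N_j\cap K^{(2)}=\{q_j\}$; then $R=K\sm\bigcup_{j\le l}N_j$ is clopen, misses $K^{(2)}$, hence has height $\le 2$, so $R^{(1)}$ is finite and $R$ is a countable (metrizable) compactum. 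This yields a finite clopen partition $K=N_1\sqcup\cdots\sqcup N_l\sqcup R$, and accordingly $\clop(K)\cong\prod_{j\le l}\clop(N_j)\times\clop(R)$.

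The key step is to identify each $\clop(N_j)$ with an algebra of the type occurring in Corollary \ref{tts:3}. For a middle point $p\in N_j^{(1)}\sm\{q_j\}$ I fix a clopen neighbourhood $V_p$ with $V_p\cap K^{(1)}=\{p\}$ and put $A_p=V_p\cap I$, a subset of $X_j:=N_j\cap I$; distinct middle points can be separated by disjoint such neighbourhoods, so $\cA_j=\{A_p\}$ is an almost disjoint family on the countable set $X_j$. Since $X_j$ is dense in $N_j$, the trace map $W\mapsto W\cap I$ is an injection of $\clop(N_j)$ into the power set of $X_j$, and I claim its image, as an algebra of subsets of $X_j$, is exactly $\langle\cA_j\cup\fin\rangle$. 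Indeed, if $W\in\clop(N_j)$ and $q_j\notin W$, then $W$ is compact and avoids $q_j$, the unique accumulation point of the discrete set $N_j^{(1)}\sm\{q_j\}$, so $W$ contains only finitely many middle points $p_1,\ldots,p_s$; compactness then forces $W\cap I=^*\bigcup_{i\le s}A_{p_i}\in\langle\cA_j\cup\fin\rangle$. If $q_j\in W$ one passes to the complement of $W$ in $N_j$ and uses that $X_j$ is the unit of the algebra. By Stone duality $N_j$ is thus homeomorphic to $\ult(\langle\cA_j\cup\fin\rangle)$ with $|\cA_j|\le\kappa<\con$, which is precisely the situation of Corollary \ref{tts:3}; as $\MA(\kappa)$ implies $\MA(|\cA_j|)$, Proposition \ref{am:8} gives that $\clop(N_j)$ has the approximation property. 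The algebra $\clop(R)$ is countable and hence has the approximation property automatically (see the remark after Definition \ref{am:-1}).

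Finally I would verify that the approximation property is preserved under finite products, which then delivers it for $\clop(K)$. Given a sequence $\vf_n$ on a product $\fA_1\times\fA_2$ that is asymptotically a measure of norm $\le 1$ on every finite subalgebra, its restrictions $\vf_n^i$ to the factors $\fA_i$ inherit the same property, so the approximation property of $\fA_i$ produces bounded $\nu_n^i\in M(\fA_i)$ with $\vf_n^i-\nu_n^i\to 0$ pointwise; putting $\nu_n=\nu_n^1\oplus\nu_n^2$ and using that $\vf_n$ is asymptotically additive on each $\langle a_1,a_2\rangle$ (so $\vf_n(a_1\sqcup a_2)-\vf_n(a_1)-\vf_n(a_2)\to 0$) gives $\vf_n-\nu_n\to 0$ pointwise. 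Iterating over the finitely many factors, $\clop(K)$ has the approximation property; by Theorem \ref{tts:1} the zerodimensional space $K$ has the $^*$extension property, and Theorem \ref{ts:2} then shows that every twisted sum of $c_0$ and $C(K)$ is trivial. I expect the main obstacle to be the identification in the third paragraph: one must take care that the trace map is an isomorphism onto $\langle\cA_j\cup\fin\rangle$ exactly because $X_j$, and not $\omega$, is the unit of the algebra attached to $N_j$, and that the finitely many top points $q_1,\ldots,q_l$ are separated well enough for the global algebra to split as the indicated finite product.
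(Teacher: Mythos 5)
Your proof is correct, but it takes a genuinely different route from the paper's. The paper \emph{glues} rather than splits: it forms the quotient $L$ of $K$ obtained by identifying all points of $K^{(2)}$, observes that $L$ is an \textsf{AU}-compactum (i.e., exactly a space covered by Corollary \ref{tts:3}), and then transfers the conclusion back through a Banach-space isomorphism --- since every infinite scattered compactum contains a nontrivial convergent sequence, $C(K)$ contains a complemented copy of $c_0$, so $C(K)\cong C(K)\oplus\mathbb{R}^n$, and a standard factorization gives $C(K)\cong C(L)\oplus\mathbb{R}^{n-1}\cong C(L)$, after which one uses that triviality of all twisted sums of $c_0$ with a given quotient space is an isomorphic invariant of that space. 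You instead work with a finite clopen partition into pieces $N_j$ around the points of $K^{(2)}$ plus a countable remainder, identify each $\clop(N_j)$ with an almost disjoint algebra (this identification is sound; note that almost disjointness of $\{A_p\}$ is automatic for any fixed choice of the $V_p$, since $V_p\cap V_{p'}$ is a compact clopen set missing $K^{(1)}$ and hence a finite set of isolated points, so no simultaneous disjointification is needed), and you supply a lemma not in the paper: the approximation property is stable under finite products of Boolean algebras. Your verification of that lemma is correct --- the restrictions $\vf_n^i$ inherit the asymptotic-measure hypothesis via the finite subalgebras $\{a\sqcup\emptyset,\, a\sqcup X_2 : a\in\fB_1\}$, and asymptotic additivity on two-generator subalgebras glues the approximating sequences together. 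What each approach buys: the paper's argument is shorter and uses Corollary \ref{tts:3} as a black box, at the cost of the Banach-space manoeuvres; yours stays entirely at the level of Boolean algebras and Theorems \ref{tts:1} and \ref{ts:2}, avoids isomorphism arguments about $C(K)$ altogether, and yields the formally stronger conclusion that $K$ itself has the $^*$extension property, while the product-stability of the approximation property is a small observation of independent use.
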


\begin{proof}
It is well-known that each infinite scattered compact space $K$ contains a nontrivial convergent sequence, and hence  $C(K)$ contains a complemented copy of $c_0$.
Consequently, for any $n\in\omega$, the space $C(K)$ is isomorphic with $C(K)\oplus \mathbb{R}^n$.

If $K$  is a separable scattered compact space of height 3, then the quotient space $L$ obtained from $K$ by gluing together all points in $K^{(2)}$ is an \textsf{AU}-compactum. Let $|K^{(2)}|=n$. A standard factorization argument shows that $C(K)$ is isomorphic to $C(L)\oplus \mathbb{R}^{n-1}$, hence it is isomorphic to $C(L)$, and we can apply Corollary \ref{tts:3}.
\end{proof}

Recall that two families $\mathcal{A}$ and $\mathcal{B}$ of infinite subsets of $\omega$ are \emph{separated} if there exists $S\subseteq\omega$ such that $A\sub^* S$, for each $A\in\mathcal{A}$, and $B\sub^* \omega\setminus S$ for each $B\in\mathcal{B}$.  Luzin constructed (in \textsf{ZFC}) an  almost disjoint family $\mathcal{L}$ of subsets of $\omega$ of cardinality $\omega_1$ such that no two disjoint uncountable subfamilies of $\mathcal{L}$ are separated, see \cite[Theorem 4.1]{vD84} and \cite{To96}
and references therein.

\begin{proposition}\label{AU:1} Let $\mathcal{A}$ be an  almost disjoint family of subsets of $\omega$ which contains two separated disjoint uncountable subfamilies. Then there exists an  $L\in \cde(K_\mathcal{A})$ such that there is no extension operator
$E:C(K_\mathcal{A})\to C(L)$ with $\|E\|<2$.
\end{proposition}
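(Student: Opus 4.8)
The plan is to realise the required $L$ as another \textsf{AU}-compactum built over an enlarged ground set, and to exploit the separator to manufacture \emph{disjoint} clopen sets in $K_{\mathcal A}$ on which any extension of norm $<2$ would be forced to carry too much mass. First I would fix uncountable subfamilies $\mathcal A_1=\{A_\xi:\xi<\omega_1\}$ and $\mathcal A_2=\{B_\eta:\eta<\omega_1\}$ of $\mathcal A$ together with a separator $S\sub\omega$, so that $A_\xi\sub^* S$ and $B_\eta\sub^*\omega\sm S$. The key observation is that then $A_\xi\sm S$ and $B_\eta\cap S$ are finite, so the sets $A_\xi^S:=A_\xi\cap S$ and $B_\eta^S:=B_\eta\sm S$ again lie in the algebra $\la\mathcal A\cup\fin\ra$; the corresponding clopen sets $\widehat{A_\xi^S}=(A_\xi\cap S)\cup\{p_{A_\xi}\}$ and $\widehat{B_\eta^S}=(B_\eta\sm S)\cup\{p_{B_\eta}\}$ are clopen neighbourhoods of $p_{A_\xi}$, resp.\ $p_{B_\eta}$, in $K_{\mathcal A}$, and, crucially, $\widehat{A_\xi^S}\cap\widehat{B_\eta^S}=\emptyset$ for \emph{all} $\xi,\eta$. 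This exact disjointness, rather than the mere almost disjointness coming from $\mathcal A$, is precisely where the separation hypothesis is used.

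Next I would take a disjoint copy $E$ of $\omega$ and, on $E$, a Luzin almost disjoint family $\{E_\xi:\xi<\omega_1\}\cup\{F_\eta:\eta<\omega_1\}$, which exists in \textsf{ZFC}; recall that no two disjoint uncountable subfamilies of a Luzin family are separated. On the ground set $\omega\sqcup E$ I would form the family $\mathcal G$ consisting of $G_\xi=A_\xi^S\cup E_\xi$, of $H_\eta=B_\eta^S\cup F_\eta$, and of $A$ for $A\in\mathcal A\sm(\mathcal A_1\cup\mathcal A_2)$; this $\mathcal G$ is almost disjoint, since its $\omega$-parts come from $\mathcal A$ (with $A_\xi^S\cap B_\eta^S=\emptyset$) and its $E$-parts from the Luzin family. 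Put $L=K_{\mathcal G}=\ult(\la\mathcal G\cup\fin\ra)$. Because a neighbourhood base of $p_{G_\xi}$ traces on the subset $\omega\cup\{p_G:G\in\mathcal G\}\cup\{\infty\}$ to a neighbourhood base of $p_{A_\xi}$ (the $E$-coordinates being invisible there and $A_\xi^S=^*A_\xi$), the original $K_{\mathcal A}$ embeds as a closed subspace with $L\sm K_{\mathcal A}=E$ discrete; hence $L\in\cde(K_{\mathcal A})$, and the trace of $\widehat{G_\xi}$ on $K_{\mathcal A}$ is exactly $\widehat{A_\xi^S}$ (similarly for $\widehat{H_\eta}$ and $\widehat{B_\eta^S}$).

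Finally I would run the norm estimate. Assuming an extension operator of norm $r<2$, Lemma \ref{p:2} supplies a bounded sequence $(\nu_n)$ in $M(K_{\mathcal A})$ with $\|\nu_n\|\le r$ and $\nu_n-\delta_{z_n}\to0$ weak$^*$ in $M(L)$, where $E=\{z_n:n\in\omega\}$; fix $\delta>0$ with $r<2-2\delta$. Applying this to the fixed continuous function $\chi_{\widehat{G_\xi}}\in C(L)$, and using that $\nu_n$ is carried by $K_{\mathcal A}$, so $\nu_n(\chi_{\widehat{G_\xi}})=\nu_n(\widehat{A_\xi^S})$, while $\chi_{\widehat{G_\xi}}(z_n)$ equals $1$ for $z_n\in E_\xi$ and $0$ otherwise, I obtain $\lim_{z_n\in E_\xi}\nu_n(\widehat{A_\xi^S})=1$. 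Thus $E_\xi\sub^*\widehat E_\xi$ for $\widehat E_\xi=\{z_n\in E_\xi:\nu_n(\widehat{A_\xi^S})>1-\delta\}$, and analogously $F_\eta\sub^*\widehat F_\eta$ for $\widehat F_\eta=\{z_n\in F_\eta:\nu_n(\widehat{B_\eta^S})>1-\delta\}$. Put $Y=\bigcup_\xi\widehat E_\xi$. If some $F_\eta\cap Y$ were infinite I could choose $z_n\in\widehat F_\eta\cap\widehat E_\xi$ for some $\xi$; then $\nu_n(\widehat{A_\xi^S})>1-\delta$ and $\nu_n(\widehat{B_\eta^S})>1-\delta$, and since these two clopen subsets of $K_{\mathcal A}$ are disjoint, $\|\nu_n\|\ge\nu_n(\widehat{A_\xi^S})+\nu_n(\widehat{B_\eta^S})>2-2\delta>r$, a contradiction. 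Hence every $F_\eta\cap Y$ is finite, so $Y$ separates $\{E_\xi\}$ from $\{F_\eta\}$, which is impossible for disjoint uncountable subfamilies of a Luzin family. This contradiction shows that no extension operator of norm $<2$ can exist.

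The heart of the matter, and the step I expect to require the most care, is the estimate in the last paragraph: unlike in the tame-compactification setting of Lemma \ref{d:1}, here the measures $\nu_n$ may charge the isolated points, so a merely finite overlap of the two clopen sets could not be discarded — this is exactly why the separator is needed to make $\widehat{A_\xi^S}$ and $\widehat{B_\eta^S}$ genuinely disjoint, while the Luzin inseparability replaces the gap that produced the contradiction in Example \ref{d:3}. The accompanying bookkeeping, namely verifying that $K_{\mathcal G}$ is a \emph{countable discrete} extension of the \emph{original} $K_{\mathcal A}$ rather than of some modification of it, is the point where I would be most careful to get right.
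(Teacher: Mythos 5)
Your proposal is correct and is essentially the paper's own proof: the paper likewise attaches a Luzin almost disjoint family at the points $p_{A_\alpha}$, $A_\alpha\in\mathcal{A}_0\cup\mathcal{A}_1$ (realizing $L$ as the quotient of the disjoint union $K_{\mathcal{A}}\cup K_{\mathcal{L}}$ obtained by identifying each $p_\alpha$ with the corresponding Luzin point $r_\alpha$ and the two points at infinity --- a space homeomorphic to your $K_{\mathcal{G}}$), applies Lemma \ref{p:2}, and derives from a norm-$<2$ extension operator a set separating two disjoint uncountable subfamilies of the Luzin family, which is impossible. The only divergence is the bookkeeping you flagged yourself: you trim the generators to $A_\xi\cap S$ and $B_\eta\setminus S$ so that the witnessing clopen sets are genuinely disjoint and run an Example-\ref{d:3}-style union argument, whereas the paper keeps the sets $A_\alpha$ intact, defines a single separating set $T=\{p\in\omega':|\nu_p|(S)>a/2\}$ via the variation over the separator, and must first discard the countable exceptional set $\Gamma$ of indices $\alpha$ for which some $\nu_p$ charges the glued point $q(p_\alpha)$ --- your trimming makes both of those devices unnecessary.
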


\begin{proof} Let $\mathcal{A}_0$ and $\cA_1$  be disjoint uncountable subfamilies of $\mathcal{A}$ separated by a set $S\subseteq\omega$. Without loss of generality we may assume that $\mathcal{A}_i$ have the cardinality $\omega_1$, so we can enumerate $\mathcal{A}_0\cup \mathcal{A}_1$ as $\{A_\alpha: \alpha<\omega_1\}$.
Let $\mathcal{L}=\{L_\alpha: \alpha<\omega_1\}$ be the Luzin almost disjoint family of subsets of a countable set $\omega'$.
We assume that the \textsf{AU}-compacta
\[ K_\mathcal{A}=\omega\cup\{p_A\colon A\in\mathcal{A}\} \cup \{\infty\} \mbox{ and }
K_\mathcal{L}=\omega'\cup\{r_L\colon L\in\mathcal{L}\} \cup \{\infty'\}\]
are disjoint. To simplify the notation we denote $p_{A_\alpha}$ by  $p_{\alpha}$ and $r_{L_\alpha}$ by  $r_{\alpha}$ for $\alpha<\omega_1$. Let $L'$ be the disjoint union of $K_\mathcal{A}$ and $K_\mathcal{L}$ and $L$ be the quotient space obtained from $L'$ by the identification of $p_{\alpha}$ with $r_{\alpha}$, for all $\alpha<\omega_1$, and $\infty$ with $\infty'$. Let $q:L'\to L$ be the quotient map. Clearly $q(K_\mathcal{A})$ is a topological copy of $K_\mathcal{A}$.

Suppose that there exists an extension operator
\[ E:C(q(K_\mathcal{A}))\to C(L) \mbox{ with }\|E\|=a<2.\]
Then by Lemma \ref{p:2}  there is a sequence $(\nu_p)_p$ in $M(q(K_\mathcal{A}))$ such that $\|\nu_p\|\le a$ for every $p\in \omega'$ and $\nu_p-\delta_{q(p)}\to 0$ in the $weak^*$ topology of $M(L)$. Let
\[\Gamma=\{\alpha<\omega_1: \nu_p(\{q(p_\alpha)\})\ne 0 \mbox{ for some } p\in \omega'\}.\]
 Obviously, the set $\Gamma$ is countable. We put
 \[ T=\{ p\in \omega': |\nu_p|(S)>a/2\}, \quad T'=\{ p\in \omega': |\nu_p|(\omega\setminus S)>a/2\};\]
 \[ \mathcal{L}_i= \{L_\alpha: A_\alpha\in\mathcal{A}_i,\ \alpha\in\omega_1\setminus\Gamma\} \mbox{ for } i=0,1.\]

  We will obtain the desired contradiction by showing that the set $T$ separates uncountable subfamilies $\mathcal{L}_i$ of $\mathcal{L}$.
  First, fix some $L_\alpha\in \mathcal{L}_0$. Take a finite set $F\sub\omega$ such that $A_\alpha\setminus F\sub S$.
  Note that the set $C=(A_\alpha\setminus F)\cup\{p_\alpha\}$ is clopen in $K_\mathcal{A}$ and  the set $D=L_\alpha \cup\{r_\alpha\}$ is clopen in $K_\mathcal{L}$. Therefore the characteristic function $f$ of $q(C\cup D)$ is continuous on $L$. For all $p\in L_\alpha$, we have $\delta_{q(p)}(f)=1$, so $(\nu_p)_p(f)\to 1$. Since $\nu_p(\{q(p_\alpha)\})=0$,  $\nu_p(A_\alpha\setminus F)>a/2$ for almost all $p\in L_\alpha$. It follows that $L_\alpha\sub^* T$. In the same way one can show that, for all $L_\alpha\in \mathcal{L}_1$,
$L_\alpha\sub^*T'$. It remains to observe that the assumption that $\|\nu_p\|\le a$ implies that $T'$ and $T$ are disjoint.
\end{proof}

\section{Remarks and open problems}\label{op}

Let us recall that a compact space is {\em Eberlein compact} if $K$ is homeomorphic to a weakly compact subset of a Banach space.
There are well-studied much wider classes of Corson and Valdivia compacta.

Given a cardinal number $\kappa$,  the $\Sigma$-product $\Sigma(\er^\kappa)$ of real lines is  the subspace of
$\mathbb{R}^\kappa$ consisting of functions with countable supports.
A compactum $K$ is  {\em Corson compact} if it can be
embedded into some $\Sigma(\er^\kappa)$;
$K$ is {\em Valdivia compact} if for some $\kappa$ there is an embedding $g:K\to\er^\kappa$ such that
$g(K)\cap\Sigma(\er^\kappa)$ is dense in the image,
see Negrepontis \cite{Ne84} and Kalenda \cite{Ka00}.

The following theorem summarizes known results concerning twisted sums of $c_0$ and $C(K)$ for these classes of compacta.

\begin{theorem}\label{Eb_Co_Va}
For a nonmetrizable $K$, there exists a nontrivial twisted sum of $c_0$ and $C(K)$ in any of the following cases:
\begin{itemize}
\item[(a)] $K$ is an Eberlein compact space (\cite{CGPY01});
\item[(b)] $K$ is a Corson compact space of weight $w(K)\ge\con$, in particular, under $\CH$, if $K$ is a Corson compact space (\cite[Theorem 3.1]{CT16});
\item[(c)] $K$ is a Valdivia compact space which does not satisfy ccc (cf. \cite{Ca16} and \cite{CT16}).
\end{itemize}
\end{theorem}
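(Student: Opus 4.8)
The plan is to handle the three cases largely by reduction to the existence of a nontrivial twisted sum of $c_0$ and $c_0(\omega_1)$ (cf.\ \cite{Ca16}), combined with the following standard permanence principle for twisted sums: if there is a nontrivial twisted sum of $c_0$ and a Banach space $V$, and $V$ is isomorphic to a complemented subspace of $C(K)$, then there is a nontrivial twisted sum of $c_0$ and $C(K)$. This is straightforward: writing $C(K)\cong V\oplus Y$ and fixing a nontrivial extension $0\to c_0\stackrel{\iota}{\to}W\to V\to 0$, the sequence $0\to c_0\to W\oplus Y\to V\oplus Y\to 0$ (with $c\mapsto(\iota(c),0)$ and $(w,y)\mapsto(\pi w,y)$) is again a twisted sum of $c_0$ and $C(K)$, and a splitting of it would, after composing the resulting projection with the coordinate projection onto $W$, produce a left inverse of $\iota$, contradicting the nontriviality of the chosen extension.

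For part (a) I would simply invoke \cite{CGPY01}, where a nontrivial twisted sum of $c_0$ and $C(K)$ is constructed for every nonmetrizable Eberlein compact $K$. Alternatively, one can note that the weakly compactly generated structure of $C(K)$ yields a complemented copy of $c_0(\Gamma)$ for an uncountable $\Gamma$, after which the permanence principle applies. For part (b) the statement is exactly \cite[Theorem 3.1]{CT16}; the ``in particular'' clause then follows because under $\CH$ every nonmetrizable compact space has weight at least $\omega_1=\con$, so the hypothesis $w(K)\ge\con$ is automatic.

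The substantive case is (c). Here I would use that a Valdivia compact $K$ failing $ccc$ carries an uncountable family $\{U_\alpha:\alpha<\omega_1\}$ of pairwise disjoint nonempty open sets. Choosing $f_\alpha\in C(K)$ with $0\le f_\alpha\le 1$, $\|f_\alpha\|=1$ and $\mathrm{supp}(f_\alpha)\subseteq U_\alpha$, the disjointness of supports shows that the closed linear span of $\{f_\alpha:\alpha<\omega_1\}$ is an isometric copy of $c_0(\omega_1)$ inside $C(K)$. The crucial point, supplied by the Valdivia structure of $K$ together with \cite[Corollary 2.7]{CT15} (see also \cite{CT16}), is that this copy is complemented; granting this, the permanence principle and the nontrivial twisted sum of $c_0$ and $c_0(\omega_1)$ finish the argument exactly as in the branch of the proof of Theorem \ref{nonseparable_LOTS} treating the failure of $ccc$.

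The main obstacle is the complementation step, i.e.\ producing a bounded projection of $C(K)$ onto the isometric copy of $c_0(\omega_1)$ in cases (a) and (c). For Eberlein compacta this rests on the projectional-resolution and WCG machinery behind \cite{CGPY01}, while for Valdivia compacta it is precisely the content of the cited complementation results of Correa and Tausk; I would quote these rather than reprove them. By contrast, part (b) is of a different character, being a direct construction in \cite{CT16} that needs no complemented $c_0(\omega_1)$, so there nothing beyond the citation is required.
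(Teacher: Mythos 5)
Your proposal is correct, and the paper's own treatment of this theorem is essentially just the citations you give: parts (a) and (b) are attributed to \cite{CGPY01} and \cite[Theorem 3.1]{CT16} with no further argument (your observation that under $\CH$ nonmetrizability forces $w(K)\ge\omega_1=\con$ is the same one-line reduction the paper intends), so the only place where a comparison is meaningful is part (c), and there you take a genuinely different route from the one the paper sketches. Immediately after the theorem, the paper demonstrates (c) as follows: the Valdivia structure yields a retraction of $K$ onto a subspace $L$ of weight $\omega_1$ which still fails $ccc$; since a non-$ccc$ compactum supports no strictly positive measure, Theorem \ref{Kubis}(b) (the Parovi\v{c}enko theorem combined with Kubi\'s's tameness criterion) produces a countable discrete extension of $L$ admitting no extension operator, and the retraction supplies an extension operator $C(L)\to C(K)$, so Remark \ref{p:11} converts this into a nontrivial twisted sum of $c_0$ and $C(K)$. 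You instead reconstruct the route of the cited sources: a disjointly supported isometric copy of $c_0(\omega_1)$ coming from the failure of $ccc$, its complementation via \cite[Corollary 2.7]{CT15}, the nontrivial twisted sum of $c_0$ and $c_0(\omega_1)$ from \cite{Ca16}, and the (correctly stated and correctly proved) complemented-subspace permanence principle --- which is exactly how this paper itself handles the non-$ccc$ branch of Theorem \ref{nonseparable_LOTS} for linearly ordered compacta. Each approach buys something: yours is soft Banach-space theory once the complementation is granted, while the paper's bypasses the complementation question altogether, which matters because the measure-theoretic argument is what motivates the authors' subsequent problem about Valdivia compacta not supporting a measure. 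One caveat you should make explicit: the complementation step is genuinely where the Valdivia (or linearly ordered) structure must enter, since a disjointly supported isometric copy of $c_0(\Gamma)$ in a $C(K)$ space need not be complemented in general --- the canonical copy of $c_0(\omega_1)$ inside $\ell_\infty(\omega_1)=C(\beta D)$, $D$ discrete of cardinality $\omega_1$, consists of norm-attaining characteristic functions with pairwise disjoint supports and is uncomplemented --- so \cite[Corollary 2.7]{CT15} cannot apply to an arbitrary such copy, and your hedge that the Valdivia structure is needed ``together with'' the citation is doing real mathematical work rather than being cosmetic. Since you quote the Correa--Tausk complementation results rather than verify their hypotheses, your write-up sits at the same level of rigor as the paper's, which likewise only cites them.
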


It is well-known that under $\MA$ and the negation of $\CH$, every Corson compact satisfying $ccc$ is metrizable. Hence, using  case (c) of the above theorem, we can state the second part of the result of Correa and Tausk from case (b) in a slightly stronger way:

\begin{theorem}[Correa and Tausk]
Assuming $\MA$, for every nonmetrizable Corson compact space $K$, there exists a nontrivial twisted sum of $c_0$ and $C(K)$.
\end{theorem}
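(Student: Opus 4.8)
The plan is to reduce the statement to the two relevant parts of Theorem \ref{Eb_Co_Va} by splitting into cases according to whether $\CH$ holds, since $\MA$ by itself does not decide the size of the continuum. Throughout I fix a nonmetrizable Corson compact space $K$ and assume $\MA$.

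First I would dispose of the case in which $\CH$ holds. Since $K$ is nonmetrizable, its weight satisfies $w(K)\ge\omega_1$, and under $\CH$ we have $\omega_1=\con$, whence $w(K)\ge\con$. Thus case (b) of Theorem \ref{Eb_Co_Va} applies directly and produces a nontrivial twisted sum of $c_0$ and $C(K)$. (Note that in this branch $\MA$ is automatically true and plays no role; everything is carried by the weight estimate.)

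Next I would treat the case $\neg\CH$. Here the auxiliary fact recalled just before the theorem --- that under $\MA$ together with $\neg\CH$ every Corson compact space satisfying ccc is metrizable --- is decisive: since $K$ is nonmetrizable, $K$ cannot satisfy ccc. On the other hand, every Corson compact space is Valdivia, because if $K$ embeds into some $\Sigma(\er^\kappa)$ then its image already lies inside the $\Sigma$-product and therefore meets it in a (trivially) dense set. Hence $K$ is a Valdivia compact space failing ccc, and case (c) of Theorem \ref{Eb_Co_Va} again yields a nontrivial twisted sum of $c_0$ and $C(K)$.

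Combining the two branches covers every model of $\MA$ and completes the argument. There is no genuine analytic obstacle here, as all the heavy lifting is done by the cited parts of Theorem \ref{Eb_Co_Va}; the only point requiring care is that one must \emph{not} assume $\neg\CH$. The subtlety is precisely that nonmetrizable ccc Corson spaces are handled only in the $\CH$ branch (via $w(K)\ge\con$), while the $\neg\CH$ branch eliminates ccc spaces altogether and reduces to the Valdivia case (c). Assembling these known ingredients --- the inclusion of Corson compacta among Valdivia compacta and the set-theoretic dichotomy for ccc Corson compacta --- is the entire content of the proof.
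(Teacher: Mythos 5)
Your proposal is correct and follows essentially the same route as the paper: the text preceding the theorem indicates exactly this dichotomy, handling the $\CH$ case by part (b) of Theorem \ref{Eb_Co_Va} and the $\neg\CH$ case by combining the fact that under $\MA$ and $\neg\CH$ every ccc Corson compact is metrizable with part (c) applied to $K$ viewed as a Valdivia compact failing ccc. Your write-up merely makes explicit the routine verifications (that $w(K)\ge\omega_1=\con$ under $\CH$, and that Corson compacta are Valdivia) which the paper leaves implicit.
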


It is natural to ask whether we can prove the above theorem in \textsf{ZFC}.

Let us note that part (c) of Theorem \ref{Eb_Co_Va} can be demonstrated as follows. If $K$ is Valdivia compact without $ccc$ then there is a retraction of $K$ onto
its subspace which has the weight $\omega_1$ and still does not satisfy $ccc$. Then one can apply Theorem \ref{Kubis}.
This suggests the following question.

\begin{problem}
Let $K$ be Valdivia compact that does not support a measure. Does  there exist a nontrivial twisted sum of $c_0$ and $C(K)$?
\end{problem}

The main obstacle here is that we do not know if every Valdivia compact space not supporting a measure has a retract of weight $\omega_1$ which
does not support a measure either.
\smallskip

We also recall a  related class of compact spaces:
a compactum $K$ is {\em Gul'ko compact} if $C(K)$ equipped with the weak topology is countably determined, i.e., is the continuous image of a closed subset of a product of some subset $S$ of the irrationals $P$ and a compact space (cf. \cite{Ne84}).
 We have the following relations between the classes of compacta mentioned above

\begin{center}
metrizable $\Rightarrow$ Eberlein $\Rightarrow$ Gul'ko $\Rightarrow$ Corson $\Rightarrow$ Valdivia
\end{center}

\noindent
and none of the above implications can be reversed, cf. \cite{Ne84}. Since each Gul'ko compact space satisfying $ccc$ is metrizable (cf. \cite[6.40]{Ne84}),
Theorem \ref{Eb_Co_Va}(c) yields

\begin{proposition}
For every nonmetrizable Gul'ko compact space $K$, there exists a nontrivial twisted sum of $c_0$ and $C(K)$.
\end{proposition}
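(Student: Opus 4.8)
The plan is to reduce the claim to part (c) of Theorem \ref{Eb_Co_Va} by checking that a nonmetrizable Gul'ko compactum is automatically Valdivia and fails the countable chain condition. First I would invoke the displayed chain of implications (metrizable $\Rightarrow$ Eberlein $\Rightarrow$ Gul'ko $\Rightarrow$ Corson $\Rightarrow$ Valdivia) to record that our $K$, being Gul'ko, is in particular a Valdivia compact space.

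The key step is to verify that $K$ does not satisfy $ccc$, and I would carry this out by contradiction. Suppose $K$ were $ccc$. Since $K$ is Gul'ko, the quoted fact that every Gul'ko compact space satisfying $ccc$ is metrizable (\cite[6.40]{Ne84}) would force $K$ to be metrizable, contradicting our standing hypothesis that $K$ is nonmetrizable. Hence $K$ must fail $ccc$.

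With $K$ now recognized as a Valdivia compact space without $ccc$, I would apply Theorem \ref{Eb_Co_Va}(c) directly to obtain a nontrivial twisted sum of $c_0$ and $C(K)$, which finishes the argument.

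As for difficulties, there is essentially no genuine obstacle once the two cited results are granted: the entire content is packaged inside \cite[6.40]{Ne84} and Theorem \ref{Eb_Co_Va}(c), and the proof is little more than a syllogism combining them. The only point where one might wish to dig deeper is the metrizability criterion for $ccc$ Gul'ko compacta itself, whose proof rests on the countable determinacy of the weak topology of $C(K)$; but for the present proposition it is entirely legitimate to cite it rather than to reprove it.
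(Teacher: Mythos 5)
Your proposal is correct and matches the paper's own argument exactly: the paper likewise notes that every $ccc$ Gul'ko compact is metrizable (citing \cite[6.40]{Ne84}), so a nonmetrizable Gul'ko compact $K$ fails $ccc$, and then applies Theorem \ref{Eb_Co_Va}(c) via the implication chain Gul'ko $\Rightarrow$ Corson $\Rightarrow$ Valdivia. Nothing to add.
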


Let us, finally, summarize the open problems mentioned in the  previous sections.
On one hand, we were not able to prove in Section \ref{tts} that
under $\MA(\omega_1)$ no separable compactum $K$ of weight $\omega_1$ admits a nontrivial twisted sum of $c_0$ and $C(K)$, see
 Problem \ref{tts:4}. On the other hand, our attempts at giving a \textsf{ZFC} construction of a separable compact space $K$ of weight $\omega_1$ and its
 countable discrete extension $L$ admitting no extension operator failed for some combinatorial reasons, see Problem \ref{d:5} and
 the assumption in Theorem \ref{I_B}.
 In all the cases we have considered,  one can construct such a pair $K$ and $L$ that there is no extension operator $E: C(K)\to C(L)$
 of small norm. However,   at each instance we needed some  additional set-theoretic assumption to kill all the possible extension operators, see e.g.\
 Proposition \ref{AU:1} and  Theorem \ref{auth}.  Therefore the following question is worth considering.

 \begin{problem}
 Does there exist a model of set theory in which   every twisted sum of $c_0$ and $C(K)$ is trivial whenever
 $K$ is a separable compactum of weight $\omega_1$?
\end{problem}

\appendix

\section{Bounded common extensions}

We discuss  here some consequences of a result due to Basile, Rao and Shortt \cite{BRS94} on common extensions of finitely additive signed measures.
Let $\fB$ be a Boolean algebras of subsets of $X$  and $\fB_1,\fB_2\sub\fB$ its two subalgebras. We consider $\nu_i\in M(\fB_i)$, $i=1,2$, where
the measures $\nu_1,\nu_2$ are  consistent, that is
$\nu_1(B)=\nu_2(B)$ for every $B\in\fB_1\cap\fB_2$.

Let $\eta$ be a function defined on $\fB_1\cup\fB_2$ by $\eta(B)=\nu_1(B)$ for $B\in\fB_1$ and $\eta(B)=\nu_2(B)$ for
$B\in\fB_2$. We define
\[SC(\nu_1,\nu_2)=\sup\left\{ \sum_{i=1}^n \left| \eta(B_{i+1})-\eta(B_i)\right|  \right\},\]
where the supremum is taken over all $n\ge 0$ and all increasing chains $\emptyset=B_0\sub B_1\sub\ldots\sub  B_{n+1}=X$ from
$ \fB_1\cup \fB_2$.

Theorem 1.5 from \cite{BRS94} asserts that there is a common extension of $\nu_1,\nu_2$ to a measure $\lambda\in M(\la\fB_1\cup\fB_2\ra)$ such that $\|\lambda\|=SC(\nu_1,\nu_2)$. Clearly, we can extend such $\lambda$ to $\fB$ preserving its norm.

\begin{lemma}\label{ap:1}
Let $\fB$ be a finite algebra having $N$ atoms. Suppose that
$\fB_1,\fB_2\sub\fB$ are subalgebras,  $\nu_i\in M(\fB_i)$ for  $i=1,2$ are two consistent measures, and $\delta>0$.

\begin{enumerate}[(a)]
\item If $|\nu_i(B)| < \delta$ for $B\in\fB_i$, $i=1,2$, then there is a common extension of $\nu_1,\nu_2$ to $\lambda\in M(\fB)$ such that
$\|\lambda\|\le 2N\delta$.
\item If $\lambda\in M(\fB)$ is such a measure that $|\lambda(B)-\nu_i(B)| < \delta$ for $B\in\fB_i$, $i=1,2$, then
there is a common extension of $\nu_1,\nu_2$ to $\lambda'\in M(\fB)$ such that
$\|\lambda-\lambda'\|\le 2N\delta$.
\item In the setting of (b), if moreover $\nu_1,\nu_2$ and $\lambda$ have rational values then there is such $\lambda'$ that also assumes only rational values.
\end{enumerate}
\end{lemma}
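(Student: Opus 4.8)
The plan is to deduce all three parts from the quoted theorem of Basile, Rao and Shortt, which produces a common extension $\lambda$ of consistent measures with $\|\lambda\|=SC(\nu_1,\nu_2)$; the whole point is therefore to estimate the chain-functional $SC$. For part (a) I would argue that the hypothesis $|\nu_i(B)|<\delta$ forces $SC(\nu_1,\nu_2)\le 2N\delta$. Fix any chain $\emptyset=B_0\sub B_1\sub\cdots\sub B_{n+1}=X$ from $\fB_1\cup\fB_2$. Discarding repetitions we may assume the inclusions are strict; since $\fB$ has $N$ atoms, a strictly increasing chain in $\fB$ has at most $N+1$ members, so the sum $\sum_{i=1}^n|\eta(B_{i+1})-\eta(B_i)|$ has at most $N$ terms. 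Each $B_j$ with $j\ge 1$ lies in $\fB_1\cup\fB_2$, so $\eta(B_j)$ equals $\nu_1(B_j)$ or $\nu_2(B_j)$ and hence $|\eta(B_j)|<\delta$; consequently every term is $<2\delta$ and the whole sum is $<2N\delta$. Taking the supremum gives $SC(\nu_1,\nu_2)\le 2N\delta$, and the cited theorem yields a common extension $\lambda\in M(\fB)$ with $\|\lambda\|=SC(\nu_1,\nu_2)\le 2N\delta$.

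For part (b) I would reduce to (a) by subtracting $\lambda$. Put $\mu_i=\nu_i-\lambda|\fB_i\in M(\fB_i)$. On $\fB_1\cap\fB_2$ the measures $\nu_1,\nu_2$ agree, so $\mu_1,\mu_2$ are again consistent, and the hypothesis gives $|\mu_i(B)|=|\nu_i(B)-\lambda(B)|<\delta$ for $B\in\fB_i$. By part (a) there is a common extension $\kappa\in M(\fB)$ of $\mu_1,\mu_2$ with $\|\kappa\|\le 2N\delta$. Then $\lambda'=\lambda+\kappa$ satisfies $\lambda'(B)=\lambda(B)+\mu_i(B)=\nu_i(B)$ for $B\in\fB_i$, so $\lambda'$ is a common extension of $\nu_1,\nu_2$, while $\|\lambda-\lambda'\|=\|\kappa\|\le 2N\delta$.

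For part (c) the point is that the construction above produces a real-valued $\kappa$, and one must arrange rationality; this is the step requiring a little care, since the quoted theorem is not stated with rational values, and it is where I expect the only genuine friction. I would exploit that the relevant inequalities are strict: because the hypotheses of (a) are strict and a finite algebra admits only finitely many chains, the estimate there actually gives $\|\kappa\|=SC(\mu_1,\mu_2)<2N\delta$, hence $\|\lambda-\lambda'\|<2N\delta$ for the $\lambda'$ coming from (b). Now identify $M(\fB)$ with $\er^N$ via the values on atoms, and let $A\sub M(\fB)$ be the set of common extensions of $\nu_1,\nu_2$. The conditions $\mu|\fB_i=\nu_i$ are the linear equations $\sum_{a\sub C}\mu(a)=\nu_i(C)$, ranging over the atoms $C$ of $\fB_i$ and the atoms $a$ of $\fB$; their coefficients are $0$ or $1$ and their right-hand sides $\nu_i(C)$ are rational. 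Thus $A$ is a nonempty affine subspace of $\er^N$ defined over $\qu$, so $A\cap\qu^N$ is dense in $A$ (a consistent rational system has a rational solution, and its kernel is a rational subspace). Since $\mu\mapsto\|\lambda-\mu\|$ is continuous and $\lambda'\in A$ has $\|\lambda-\lambda'\|<2N\delta$, any sufficiently close $\lambda''\in A\cap\qu^N$ still satisfies $\|\lambda-\lambda''\|\le 2N\delta$; this $\lambda''$ is the desired rational common extension. The main obstacle is exactly this rationality bookkeeping, which is resolved by the observation that a rational linear system with a real solution has rational solutions dense among all of its solutions.
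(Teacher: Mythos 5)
Your proof is correct. Parts (a) and (b) coincide with the paper's own argument: the chain estimate $SC(\nu_1,\nu_2)\le 2N\delta$ (a strictly increasing chain from $\emptyset$ to $X$ in an algebra with $N$ atoms has at most $N$ steps, each contributing less than $2\delta$) fed into the Basile--Rao--Shortt theorem, followed by the reduction of (b) to (a) via $\nu_i-\lambda|\fB_i$. In part (c), however, you take a genuinely different route. The paper assumes without loss of generality that $\delta\in\qu$ and considers the set $E=\{\mu\in M(\fB):\mu\mbox{ extends }\nu_1,\nu_2,\ \|\mu-\lambda\|\le 2N\delta\}$, which is a nonempty compact polytope in $\er^N$ cut out by equations and inequalities with rational coefficients; any extreme point of $E$ is then automatically rational and serves as $\lambda'$. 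You instead exploit strictness: since the hypotheses are strict and $\fB_1\cup\fB_2$ admits only finitely many chains, the supremum defining $SC$ is a maximum and is strictly below $2N\delta$, so your $\lambda'$ from (b) satisfies $\|\lambda-\lambda'\|<2N\delta$; you then perturb $\lambda'$ inside the affine subspace $A$ of all common extensions, which is defined by a rational linear system (values on atoms of $\fB_i$), using the standard fact that rational points are dense in the real solution set of a consistent rational system. Both arguments are sound. Yours avoids the paper's reduction to rational $\delta$ entirely, since $\delta$ never enters your defining data, at the cost of the extra observation that the bound is strict; the paper's extreme-point argument is marginally more robust in that it would survive even with non-strict hypotheses, and it packages the norm constraint and the extension constraints into a single rational polytope rather than treating them separately.
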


\begin{proof}
To check $(a)$ it is enough to notice that if $B_0, \ldots, B_{n+1}$ is a strictly increasing  chain in $\fB_1\cup\fB_2$ then $n+1\le N$ so clearly $SC(\nu_1,\nu_2)\le 2N\delta$.

To get $(b)$ we can apply $(a)$ to the measures $\nu_1'=\nu_1-\lambda$ and $\nu_2'=\nu_2-\lambda$ considered on $\fB_1$ and $\fB_2$,
respectively.

For $(c)$ we may also assume that $\delta\in\qu$.
By (b) the set
\[ E=\{\mu\in M(\fB): \mu\mbox{ extends } \nu_1,\nu_2\mbox{ and } \|\mu-{\lambda}\|\le 2N\delta\},\]
is nonempty. The set $E$ may be identified with a symplex in $\er^N$ defined by equations and inequalities with rational coefficients.
Hence any extreme point of $E$ has rational coefficients and defines the required measure $\lambda'$.
\end{proof}

\begin{lemma}\label{ap:2}
Let $\fB$ be a finite algebra. 
For any  subalgebra $\fC\sub\fB$  and a measure $\nu \in M(\fC)$,
if, for some $\delta>0$, there is $\lambda\in M_1(\fB)$ such that $|\lambda(C)-\nu(C)|<\delta$ for $C\in\fC$ then
there is an extension of $\nu$ to $\mu\in M(\fB)$ such that $\|\mu\| \le \max(1,\|\nu\|)$
and $|\mu(B)-\lambda(B)|\le 3\delta$ for every $B\in\fB$.

If, moreover, $\nu$ and $\lambda$ have rational values then there is such $\mu$ with rational values.
\end{lemma}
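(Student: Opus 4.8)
The plan is to reduce everything to the \emph{defect measure} $d=\nu-\lambda|\fC\in M(\fC)$ and to exploit that the hypothesis controls $\nu$ and $\lambda$ on \emph{every} element of $\fC$, not merely on its atoms. Indeed, $|d(C)|=|\nu(C)-\lambda(C)|<\delta$ for all $C\in\fC$ means, taking the Hahn sets $C^+,C^-\in\fC$ of the finitely additive measure $d$ (available since $\fC$ is finite), that $d^+(X)=d(C^+)<\delta$ and $d^-(X)=-d(C^-)<\delta$; hence the total variation satisfies $\|d\|=d^+(X)+d^-(X)<2\delta$. This is the crucial point, and it is what prevents the per-atom errors from accumulating.

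First I would produce a cheap extension of $\nu$ that is automatically close to $\lambda$. Choosing, for each atom $C$ of $\fC$, one atom $a(C)$ of $\fB$ with $a(C)\sub C$, let $\rho\in M(\fB)$ be concentrated on these atoms with $\rho(a(C))=d(C)$. Then $\rho|\fC=d$, so $\mu_1:=\lambda+\rho$ extends $\nu$; moreover $\|\rho\|=\sum_C|d(C)|=\|d\|<2\delta$, whence $\dist_\fB(\mu_1,\lambda)\le\|\rho\|<2\delta$ and $\|\mu_1\|\le\|\lambda\|+\|d\|<1+2\delta$. (If $\nu,\lambda$ have rational values, so does $\mu_1$.) Thus the closeness requirement is already met with room to spare; the only defect is that $\|\mu_1\|$ may slightly exceed the target $\max(1,\|\nu\|)$.

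The second step is to shave the norm down to $\max(1,\|\nu\|)$ by removing the superfluous cancellation of $\mu_1$ inside the atoms of $\fC$. Since $\mu_1|\fC=\nu$, the least norm of any extension of $\nu$ is $\|\nu\|$, attained by a same-sign (on each $\fC$-atom) extension; hence $\|\mu_1\|$ can be decreased continuously down to $\|\nu\|$ by repeatedly transferring mass $t$ from a positive $\fB$-atom to a negative $\fB$-atom lying in the \emph{same} atom of $\fC$. Each such transfer preserves $\mu|\fC=\nu$, lowers $\|\mu\|$ by exactly $2t$, and changes $\mu(B)$ by at most $t$ for every $B\in\fB$. I would stop once the norm reaches $\max(1,\|\nu\|)$ (doing nothing if $\|\mu_1\|$ is already within the bound); the total mass transferred is then $\tfrac12\big(\|\mu_1\|-\max(1,\|\nu\|)\big)<\tfrac12\big((1+2\delta)-1\big)=\delta$, using $\max(1,\|\nu\|)\ge1$, so the accumulated change satisfies $\dist_\fB(\mu,\mu_1)<\delta$. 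Combining, $\dist_\fB(\mu,\lambda)\le\dist_\fB(\mu,\mu_1)+\dist_\fB(\mu_1,\lambda)<\delta+2\delta=3\delta$, while $\|\mu\|\le\max(1,\|\nu\|)$ and $\mu|\fC=\nu$, as required.

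For the rational refinement I would argue as in Lemma \ref{ap:1}(c): the measures $\mu\in M(\fB)$ with $\mu|\fC=\nu$, $\|\mu\|\le\max(1,\|\nu\|)$ and $|\mu(B)-\lambda(B)|\le 3\delta$ for all $B$ form a polytope in $\er^N$ ($N$ the number of atoms of $\fB$), nonempty by the construction above and cut out by inequalities with rational coefficients when $\nu,\lambda,\delta$ are rational; an extreme point then has rational coordinates and yields the desired $\mu$. The main obstacle is the tension between the norm bound and the closeness bound, and the realization that dissolves it is $\|d\|<2\delta$: once this is in hand, the closeness cost is controlled \emph{globally} rather than atom by atom, and the norm can be corrected by an explicitly budgeted cancellation removal, the two contributions to the closeness error summing to the stated $3\delta$.
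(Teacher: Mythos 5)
Your proof is correct and follows essentially the same route as the paper's: you extend the defect $\nu-\lambda|\fC$ by concentrating it on one chosen $\fB$-atom per atom of $\fC$ (the paper's $\widetilde{\nu_1}$, with the same Hahn-decomposition bound giving total variation $<2\delta$), and then shave the norm down to $\max(1,\|\nu\|)$ by removing cancellation inside the atoms of $\fC$ via mass transfers with total budget below $\delta$, which is exactly the paper's modification of $\lambda'$ on the sets $B_j^{+},B_j^{-}$. The only cosmetic differences are the split of the $3\delta$ budget ($2\delta+\delta$ for you, since you bound $\dist_\fB(\mu_1,\lambda)$ by $\|\rho\|<2\delta$, versus the paper's sharper $\dist_\fB(\lambda',\lambda)<\delta$ obtained by observing that each value $\widetilde{\nu_1}(B)$ equals $\nu_1(C_B)$ for some $C_B\in\fC$) and the rational case, where you reuse the extreme-point argument of Lemma \ref{ap:1}(c) while the paper simply tracks rationality through the explicit construction (after the same harmless reduction to $\delta\in\qu$).
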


\begin{proof}
Note first that for any $\nu_1\in M(\fC)$, if $|\nu_1(C)|<\delta$ for every $C\in\fC$ then there is an extension
$\widetilde{\nu_1}\in M(\fB)$ of $\nu_1$ such that $|\widetilde{\nu_1}(B)|<\delta$ for every $B\in\fB$. Indeed, we can define
such $\widetilde{\nu_1}$ by the following procedure:
If $C$ is an atom of $\fC$ then choose any atom $B$ of $\fB$ contained in $C$ and set $\widetilde{\nu_1}(B)=\nu_1(C)$ and
$\widetilde{\nu_1}(B_1)=0$ for every $B_1\in\fB$ contained in  $C\sm B$.
Note also that then $\widetilde{\nu_1}$ satisfies $\|\widetilde{\nu_1}\|<2\delta$.

We can now apply the  preceding remark to $\nu_1=\nu-\lambda$ considered on $\fC$ to get $\widehat{\nu_1}$ as above.
Then the measure $\lambda'=\widehat{\nu_1}+\lambda$ extends $\nu$ and satisfies $\|\lambda'\|<\|\lambda\|+2\delta\le 1+2\delta$.

Now it is enough to check that we can appropriately lower the size of $\|\lambda'\|$.
Consider first some atom $C$ of $\fC$ and let  $B^+$ be the union of all atoms $B$ of $\fB$ contained in $C$ for which
$\lambda'(B)>0$; set  $B^-=C\sm B^+$. Note that if $t\le \min(|\lambda'(B^+),|\lambda'(B^-)|$ then we can modify $\lambda'$ on $C$,
assigning the value $\lambda'(B^+)-t$ to $B^+$ and $\lambda'(B^-)+t$ to $B^-$; this  defines  an extension of $\nu$ of norm $\|\lambda'\|-2t$.

Let now $C_1,\ldots, C_m$ be the list of all atoms; we divide every $C_j$ into $B_j^+$ and $B_j^-$ as described above.
Let
\[ p=\sum_{j\le m} \min\big( \lambda'(B_j^+),|\lambda'(B_j^-)|\big).\]
 If $p\ge\delta$, by the procedure described above we
shall get a measure $\mu$ extending $\nu$ with $\|\mu\|\le 1$. Namely,  we then choose numbers nonnegative $t_j\le  \min\big( |\lambda'(B_j^+),|\lambda'(B_j^-)|\big)$
such that $\sum_{j\le m} t_j=\delta$, and apply the modification by $t_j$ to $C_j$.
 If $p<\delta$ then the same procedure will give
$\mu$ such that $\mu$ is either nonnegative or nonpositive on each $C_i$. In such a case
\[\|\mu\|=\sum_{j\le m} |\mu(C_j)|=\sum_{j\le m} |\nu(C_j)|=\|\nu\|.\]
In both cases we shall have $|\mu(B)-\lambda'(B)|\le 2\delta$ for any $B\in\fB$ so $\mu$ will be the required  measure.


For the final statement just note that we can assume that $\delta\in\qu$; the above argument shows that
in such a case $\lambda'$ and $\mu$ have values in $\qu$.
\end{proof}

In the last auxiliary result we consider the the following set in a Euclidean space:
\[T(a,b)=\left\{x\in \er^m\times\er^n: \sum_{j\le n}x_{ij}=a_i \mbox{ for  } i\le m,  \sum_{i\le m}x_{ij}=b_j \mbox{ for } j\le n\right\}.\]

\begin{lemma}\label{ap:3}
For and $a\in \er^m$, $b\in \er^n$ such that $\sum_{i\le m} a_i=\sum_{j\le n} b_j$
 there is $x\in T(a,b)$ satisfying
 \[\sum_{i,j}|x_{ij}|\le \max\big( \sum_{i\le m} |a_i|, \sum_{j\le n} |b_j|\big).\]
\end{lemma}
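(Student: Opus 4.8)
The plan is to prove the statement by induction on $m+n$, peeling off at each step one coordinate of the marginal with larger total variation. Write $A=\sum_{i\le m}|a_i|$ and $B=\sum_{j\le n}|b_j|$, and set $A^+=\sum_{a_i>0}a_i$, $A^-=\sum_{a_i<0}(-a_i)$, so that $A=A^++A^-$ and $S:=\sum_i a_i=A^+-A^-$. By interchanging the roles of $(a,m)$ and $(b,n)$ if necessary I may assume $A\ge B$, and then aim for the sharper bound $\sum_{i,j}|x_{ij}|\le A$. For the base case $n=1$ the matrix is forced, $x_{i1}=a_i$, and $\sum_i|x_{i1}|=A=\max(A,B)$; the case $m=1$ is symmetric and even easier, giving $\sum_j|x_{1j}|=B\le A$.

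In the inductive step ($m,n\ge 2$) I would first locate a column index $j_0$ with $b_{j_0}\in[-A^-,A^+]$, and then \emph{fill} that column using only mass of matching sign drawn from the rows: I choose $x_{ij_0}$ of the same sign as $a_i$ with $|x_{ij_0}|\le|a_i|$ and $\sum_i x_{ij_0}=b_{j_0}$, which is possible exactly because $b_{j_0}$ lies in the achievable range $[-A^-,A^+]$ (distribute $b_{j_0}$ among the positive $a_i$ if $b_{j_0}\ge 0$, among the negative ones otherwise). The whole point of the sign matching is the exact telescoping
\[ |a_i-x_{ij_0}|=|a_i|-|x_{ij_0}|, \qquad \sum_i|x_{ij_0}|=|b_{j_0}|, \]
so that the reduced row vector $a_i':=a_i-x_{ij_0}$ satisfies $\sum_i|a_i'|=A-|b_{j_0}|$. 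Applying the inductive hypothesis to $a'$ and $(b_j)_{j\ne j_0}$ (whose marginals are compatible since $\sum_i a_i'=S-b_{j_0}$) produces the remaining columns with total variation at most $\max\bigl(A-|b_{j_0}|,\,B-|b_{j_0}|\bigr)=A-|b_{j_0}|$; adding back the $j_0$-th column contributes exactly $|b_{j_0}|$, and the two terms sum to $A$, as required.

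The main obstacle is precisely the existence of this ``interior'' column, i.e.\ an index $j_0$ with $b_{j_0}\in[-A^-,A^+]$, and this is where the hypothesis $B\le A$ is essential. I would argue by contradiction: if every $b_j$ lay outside $[-A^-,A^+]$, then one cannot have one column $>A^+$ together with another $<-A^-$, since two such columns would already force $B> A^++A^-=A$, contradicting $B\le A$. Hence all out-of-range columns would lie on a single side; but the averaging inequality $\min_j b_j\le S/n\le A^+$ rules out all columns exceeding $A^+$, while symmetrically $\max_j b_j\ge S/n\ge -A^-$ rules out all columns below $-A^-$, where both estimates use $n\ge1$ together with $-A^-\le S\le A^+$. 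This contradiction secures the desired column and completes the induction.
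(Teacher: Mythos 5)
Your proof is correct, but it takes a genuinely different route from the paper's. The paper's induction on $m+n$ uses the hypothesis $\sum_i a_i=\sum_j b_j$ only to produce a single same-signed pair $a_i,b_j$, say $0\le a_1\le b_1$; it then places the one entry $x_{11}=a_1$, fills the rest of row $1$ with zeros, and recurses on the $(m-1)\times n$ problem with $b_1$ replaced by $b_1-a_1$, checking directly that $r=\max\bigl(\sum_i|a_i|,\sum_j|b_j|\bigr)$ drops by at least $a_1$. So the paper removes one line per step using a \emph{single} nonzero entry, with an existence step that is immediate and no normalization of which marginal is larger. You instead normalize $A\ge B$, remove a whole column per step by spreading $b_{j_0}$ sign-consistently over all rows, and your real work is the existence of an ``interior'' column $b_{j_0}\in[-A^-,A^+]$ --- which you prove correctly: $B\le A$ forbids out-of-range columns on both sides simultaneously, and the averaging bounds $-A^-\le S/n\le A^+$ exclude each one-sided alternative. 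Both arguments rest on the same telescoping identity $|a_i-x_{ij_0}|=|a_i|-|x_{ij_0}|$ for sign-matched mass. What your version buys is exact bookkeeping (both marginal variations drop by exactly $|b_{j_0}|$, so the bound $A=\max(A,B)$ falls out of pure telescoping) and an explicit explanation of where the \emph{maximum} of the two variations enters --- your example-free observation that the sharper bound $A$ genuinely needs $A\ge B$ is the conceptual point the paper leaves implicit. What the paper's version buys is brevity and the absence of any auxiliary existence lemma, at the cost of a slightly casual treatment of degenerate sign configurations (e.g.\ when one marginal vanishes identically), which your weak-sign conventions handle cleanly. One small remark: since the lemma is invoked in Proposition \ref{am:7} for measures with rational values, it is worth noting that your column-filling step can be carried out with rational entries whenever $a$ and $b$ are rational (fill the column greedily), just as the paper's single-entry step preserves rationality automatically.
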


\begin{proof}
The assertion is clearly true of either $m=1$ or $n=1$. We argue by induction on $m+n$.

Since $\sum_{i\le m} a_i=\sum_{j\le n} b_j$ there are $i$ and $j$ such that $a_i$ and $b_j$ have the same sign. Suppose e.g.\
that this is the case for $i=j=1$. Moreover, let us assume that $0\le a_1\le b_1$; the other case may be treated by symmetric argument.
Set

\begin{enumerate}[(i)]
\item $x_{11}=a_1$ and $x_{1,j}=0$ for $j>1$;
\item $b_1'=b_1-a_1$, $b_j'=b_j$ for $j>1$;
\item $a'=(a_2,\ldots, a_m)\in \er^{m-1}$.
\end{enumerate}

Then for
\[r'=\max\big( \sum_{2\le i\le m} |a_i|, \sum_{j\le n} |b_j'|\big ),\]
 by the inductive assumption there is
 \[x'=(x_{ij})_{2\le i\le m, 1\le j\le n},\]
 such that $x'\in T(a',b')$ and $\|x'\|\le r'$.
 Note that
 \[\sum_{2\le i\le m} |a_i|\le r-a_1,\]
 \[ \sum_{j\le n} |b_{j}'|=b_1-a_1  +\sum_{j\le 2\le n} |b_j|\le  b_1-a_1+ r- b_1=r-a_1,\]
 so $r'\le r-a_1$. Hence we can extend $x'$ by the first row defined above and get  the required vector $x$.
 \end{proof}

\end{document}